\documentclass[12pt, a4paper, reqno]{amsart}


\usepackage[T1]{fontenc} 

\usepackage{graphics}
\DeclareGraphicsExtensions{.eps}

\usepackage{epstopdf}
\usepackage{ulem} \normalem 

\usepackage{a4wide}
\usepackage[margin=2cm]{geometry}

\makeatletter
\newcommand{\addresseshere}{%
  \enddoc@text\let\enddoc@text\relax
}
\makeatother

\makeatletter
\newcommand{\eqlab}[1]{\leavevmode\hfill\refstepcounter{equation}\label{#1}\textup{\tagform@{\theequation}}}
\makeatother

\usepackage{amsmath, amsthm}
\allowdisplaybreaks




\makeatletter
\@ifpackageloaded{amssymb}{}{\usepackage[bitstream-charter]{mathdesign}}
\makeatother

\usepackage{graphicx}
\usepackage{caption}
\usepackage{color}
\usepackage{enumitem}

\usepackage[pdftex, bookmarksopen, pagebackref,
            pdftitle={Numerical methods for the deterministic second moment equation of parabolic stochastic PDEs},
            pdfauthor={K. Kirchner},
            colorlinks, linkcolor=black, urlcolor=black, citecolor=black,
]{hyperref}

\usepackage[boxslash]{stmaryrd}
\SetSymbolFont{stmry}{bold}{U}{stmry}{m}{n}

\usepackage{multirow}

\newcommand{\T}{\mathsf{T}}

\newcommand{\GL}{\text{GL}} 
\newcommand{\GR}{\text{GR}^{\leftarrow}} 

\newcommand{\IP}{\mathbb{P}}
\newcommand{\IR}{\mathbb{R}}
\newcommand{\IN}{\mathbb{N}}

\newcommand{\norm}[2]{     \| #1       \|_{ #2 }}

\newcommand{\normiii}[2]{\vert\kern-0.25ex\vert\kern-0.25ex\vert #1 \vert\kern-0.25ex \vert\kern-0.25ex\vert_{ #2 }}
\newcommand{\Normiii}[2]{\left\vert\kern-0.25ex\left\vert\kern-0.25ex\left\vert #1 \right\vert\kern-0.25ex\right\vert\kern-0.25ex\right\vert_{ #2 }}
\newcommand{\scalar}[2]{     ( #1       )_{ #2 }}

\newcommand{\duality}[2]{     \langle #1       \rangle_{ #2 }}

\newcommand{\bbE}{\mathbb{E}}
\newcommand{\bbM}{\mathbb{M}}
\newcommand{\bbP}{\mathbb{P}}
\newcommand{\cA}{\mathcal{A}}
\newcommand{\cB}{\mathcal{B}}
\newcommand{\cD}{\mathcal{D}}
\newcommand{\cF}{\mathcal{F}}
\newcommand{\cG}{\mathcal{G}}

\newcommand{\cL}{\mathcal{L}}
\newcommand{\cN}{\mathcal{N}}

\newcommand{\cS}{\mathcal{S}}
\newcommand{\cT}{\mathcal{T}}
\newcommand{\cU}{\mathcal{U}}

\newcommand{\cX}{\mathcal{X}}
\newcommand{\cY}{\mathcal{Y}}
\newcommand{\rd}{\,\mathrm{d}}

\newcommand{\Dmat}{\mathbf{D}}
\newcommand{\Gmat}{\mathbf{G}}
\newcommand{\Lmat}{\mathbf{L}}
\newcommand{\Umat}{\mathbf{U}}

\newcommand{\Wmat}{\mathbf{W}}
\newcommand{\Thetamat}{\boldsymbol{\Theta}}
\newcommand{\Lambdamat}{\boldsymbol{\Lambda}}
\newcommand{\tr}{\operatorname{tr}}

\newcommand{\Cov}{\operatorname{Cov}}
\newcommand{\Var}{\operatorname{Var}}
\newcommand{\E}[1]{{\mathbb{E}\!}\left[ #1 \right]}
\newcommand{\e}[1]{{\mathbb{E}}[ #1 ]}

\newcommand{\AND}{\quad\text{and}\quad}
\newcommand{\TEXT}[1]{\quad\text{#1}\quad}

\let\nativecdot\cdot
\usepackage{marginnote}
\newcommand{\CDOT}{\,\nativecdot\,}
\renewcommand{\cdot}{{\nativecdot\smash{\marginnote{\text{\color{red}CDOT}}}}}

\newcommand{\IntervalCO}[1]{[#1)}
\newcommand{\boxrule}{\reflectbox{$\boxslash$}}

\let\nativedelta\delta
\let\nativeDelta\Delta
\usepackage{marginnote}
\renewcommand{\delta}{{\nativedelta\smash{\marginnote{\text{\color{red}$\nativedelta$ illegal}}}}}
\renewcommand{\Delta}{{\nativeDelta\smash{\marginnote{\text{\color{red}$\nativeDelta$ illegal}}}}}
\newcommand{\Dirac}{\text{\dh}}
\newcommand{\diag}{\nativedelta}
\newcommand{\DIAG}{\nativeDelta}
\newcommand{\krondelta}{\nativedelta}

\newcommand{\from}{\colon}

\newtheorem{lemma}{Lemma}[section]
\newtheorem{proposition}[lemma]{Proposition}
\newtheorem{theorem}[lemma]{Theorem}
\newtheorem{corollary}[lemma]{Corollary}
\newtheorem{remark}[lemma]{Remark}
\theoremstyle{remark}
\theoremstyle{definition}

\newtheorem{example}[lemma]{Example}

\definecolor{somegree}{rgb}{0,.6,0}
\definecolor{someblue}{rgb}{0,0,.8}
\definecolor{toverify}{rgb}{1,0,0}
\definecolor{lila}{RGB}{128,0,255}


\def\[#1\]{%
  \begin{align}#1\end{align}%
}


\begin{document}

\title%
[Numerics for the second moment equation of SPDEs]%
{Numerical methods for the deterministic second moment equation of parabolic stochastic PDEs}

\author{Kristin Kirchner}
\date{\today}

\address[K.~Kirchner]{Department of Mathematical Sciences, Chalmers University of Technology and University of Gothenburg, SE-412 96 Gothenburg, Sweden}
\email{kristin.kirchner@chalmers.se}

\begin{abstract}
	Numerical methods for stochastic partial differential equations
	typically
	estimate moments of the solution
	from sampled paths.
	Instead,
	we shall directly target
	the deterministic equations
	satisfied by the first and second moments, as well as the covariance.

	In the first part, we focus on stochastic ordinary differential equations.
	For the canonical examples
	with additive noise (Ornstein--Uhlenbeck process)
	or
	multiplicative noise (geometric Brownian motion)
	we derive these deterministic equations
	in variational form
	and
	discuss their well-posedness in detail.
	Notably,
    the second moment equation
	in the multiplicative case
    is naturally posed on
	projective--injective tensor product spaces
	as trial--test spaces.
	We construct Petrov--Galerkin discretizations
	based on tensor product piecewise polynomials
	and analyze their stability and convergence
	in these natural norms.

	In the second part,
	we proceed with
	parabolic stochastic partial differential equations
	with affine multiplicative noise.
	We prove well-posedness of the deterministic variational problem
	for the second moment,
	improving an earlier result.
	We then propose conforming space-time Petrov--Galerkin discretizations,
    which we show to be stable and quasi-optimal.

    In both parts, the outcomes are illustrated by numerical examples.
\end{abstract}

\keywords{%
	Stochastic ordinary differential equations,
	stochastic partial differential equations,
	moments,
	covariance,
	autocorrelation,
	variational problem,
	tensor product spaces,
	stability and convergence,
	Petrov--Galerkin discretization%
}

\subjclass[2010]{%
    35R60
	,
    60H15
	,
	65C30
    ,
    65M12
    ,
    65M60
}


\maketitle

\section{Introduction}
\label{s:0}

\subsection{Introduction}

Ordinary and partial differential equations
are pervasive
in
financial, biological, engineering and social sciences,
to name a few.
Often,
randomness
is introduced
in order to model
uncertainties
in the coefficients,
in the geometry of the physical domain,
in the boundary or initial conditions,
or in the sources (right-hand sides).
In this work we aim at the latter scenario,
specifically
ordinary or partial differential evolution equations
driven by additive or multiplicative noise.
The random solution
is then a stochastic process
with values in a certain state space.
If the noise is a Wiener process,
the solution paths are continuous in time.
When the state space is
of finite dimension ($\leq 3$, say),
it may be possible
to
approximate numerically
the temporal evolution of
the probability density function
of
the stochastic process.
For stochastic PDEs, this is in general computationally too expensive.
One therefore estimates
the mean and possibly the covariance
of the solution process,
also given by
its first two statistical moments.

To estimate moments
of the random solution
one can resort to sampling methods
such
as Monte Carlo (MC).
For every sample path,
viz.~a realization of the random input,
a deterministic
ordinary or partial
differential evolution equation
is solved.
The vanilla MC
exhibits the notorious convergence rate $1/2$
in the number of samples.
On the upside,
sampling methods are usually trivial to parallelize
across samples.
Recent developments
include multilevel MC
\cite{BarthLangSchwab2013, Charrier2013, Cliffe2011, Giles2008, Giles2015, TeckentrupScheichlGilesUllmann2013},
quasi-MC \cite{DickKuoSloan2013, Graham2011, GrahamKuoNicholsScheichlSchwabSloan2015, KuoSchwabSloan2012},
and combinations thereof~\cite{Giles2009, KuoSchwabSloan2015}.
More
on solving random and parametric equations
can be found in
\cite{BachmayrSchneiderUschmajew2016, BennerGugercinWillcox2015, CohenDeVore2015, GrasedyckKressnerTobler2013, SchwabGittelson2011}.

For the covariance
of
the solution to a parabolic stochastic PDE
driven by
additive Wiener noise,
an alternative to sampling
was proposed
in \cite{LangLarssonSchwab2013}.
It is based
on the insight
that the second moment
solves
a well-posed
linear
deterministic space-time variational problem
on
Hilbert tensor products of Bochner spaces.
The main promise
of space-time variational formulations
is
in potential computing time and memory savings
through
space-time compressive schemes,
e.g., using
adaptive wavelet methods
\cite{Stevenson2009}
or
low-rank tensor approximations
\cite{BachmayrSchneiderUschmajew2016, GrasedyckKressnerTobler2013, Hackbusch2014}.
In principle,
it is straightforward
to construct numerical methods
for the formulation from \cite{LangLarssonSchwab2013}
by tensorizing
existing
discretizations
of
deterministic parabolic evolution equations (space-time or not),
the main practical issue
being
the high dimensionality of the resulting equations.

The space-time variational formulation from \cite{LangLarssonSchwab2013}
was extended in \cite{KirchnerLangLarsson2016}
to include
multiplicative L\'evy noise.
This
required
a more careful analysis
because
firstly,
an extra term
in
the space-time variational formulation
constrains it
to
non-Hilbert
tensor product spaces
for the trial and test spaces;
secondly,
the well-posedness
is
self-evident
only as long as
the volatility of the multiplicative noise
is sufficiently small.
Consequently,
contrary to the additive case,
a dedicated design and analysis of numerical schemes is required for
stochastic PDEs with multiplicative noise.
To fully explain and address
these issues,
in this work
we first focus
on canonical examples
of stochastic ODEs
driven by
additive
or
multiplicative
Wiener noise.
To facilitate the transition and the comparison
to parabolic stochastic PDEs,
our estimates
are explicit and sharp in
the relevant parameters.
We then proceed with parabolic
stochastic PDEs driven by
multiplicative L\'evy noise as in \cite{KirchnerLangLarsson2016}.
The transition from convolutions of real-valued functions
to semigroup theory on tensor product spaces
allows us to prove well-posedness
of the deterministic second moment equation
also in the vector-valued situation
even beyond the smallness assumption
on the multiplicative noise term
made in~\cite[Eq.~(5.5)]{KirchnerLangLarsson2016}.

This article is structured as follows.
In \S\ref{s:sode}
we introduce the model stochastic ODEs
and the necessary definitions,
derive the deterministic equations
for
the first and second moments
and
discuss their well-posedness.
In \S\ref{s:dis}
we
present conforming
Petrov--Galerkin discretizations
of these equations
and
discuss their stability,
concluding with a numerical example.
In \S\ref{s:pde}
we generalize the
results of \S\S\ref{s:sode}--\ref{s:dis}
to stochastic PDEs
with affine multiplicative noise
and, again, verify
these
by numerical experiments.
The outcomes of this work
are summarized in \S\ref{s:end}.

\subsection{Notation}\label{s:notation}

We briefly comment on notation.
If $X$ is a Banach space then $S(X)$ denotes its unit sphere
and $X'$ its dual, i.e, all linear
continuous mappings from $X$ to $\IR$.
We write $s \wedge t := \min\{ s, t \}$.
The symbol $\Dirac$ ($\Dirac_s$) denotes the Dirac measure (at $s$).
The closure of an interval $J$ is $\bar{J}$.
We mark equations which hold almost everywhere
or $\bbP$-almost surely with a.e.\ and $\bbP$-a.s., respectively.
The space of bounded linear operators $X \to Y$
is denoted by $\cL(X; Y)$;
those on $X$ by $\cL(X)$.

Depending on the context,
the symbol $\otimes$ denotes
the tensor product of two functions or operators,
the algebraic tensor product
of function spaces,
or the Kronecker product of matrices.

If $H$ is a Hilbert space
then
the Hilbert tensor product space
$
H_2 := H \otimes_2 H
$
is
obtained as the closure of
the algebraic tensor product $H \otimes H$
under
the norm $\norm{\CDOT}{2}$ induced by
the ``tensorized'' inner product
$
\scalar{ a \otimes b, c \otimes d }{2}
:=
\scalar{ a, c }{H}
\scalar{ b, d }{H}
$.

A function $w \in L_2(J \times J)$
is called
symmetric positive semi-definite (SPSD)
if
\[
	\label{e:JSPSD}
	w(s, t) = w(t, s)
	\quad
	\text{a.e.\ in } J\times J
	\TEXT{and}
	\int_J \int_J
	w(s, t)
	\varphi(s)
	\varphi(t)
	\rd s
	\rd t
	\geq 0
	\quad
	\forall
	\varphi \in L_2(J)
	.
\]
More generally,
if $H$ is a Hilbert space (we have $H = L_2(J)$ in \eqref{e:JSPSD}, cf.~\eqref{e:L2}),
then
an element $w$ of the Hilbert tensor product $H \otimes_2 H$
is symmetric and positive semi-definite,
abbreviated as $H$-SPSD,
if
\[
	\label{e:HSPSD}
	\scalar{ w, \varphi \otimes \tilde\varphi }{2}
	=
	\scalar{ w, \tilde\varphi \otimes \varphi }{2}
	\TEXT{and}
	\scalar{ w, \varphi \otimes \varphi }{2} \geq 0
	\qquad
	\forall
	\varphi, \tilde\varphi \in H
	.
\]
It is called symmetric if the equality in~\eqref{e:HSPSD} holds,
and antisymmetric if
$
	\scalar{ w, \varphi \otimes \tilde\varphi }{2}
	=
	-\scalar{ w, \tilde\varphi \otimes \varphi }{2}
	.
$
A functional $\ell$ defined on some closure of $H \otimes H$
is
called
symmetric positive semi-definite (SPSD)
if
\[
	\label{e:psd}
	\ell( \psi \otimes \tilde\psi )
	=
	\ell( \tilde\psi \otimes \psi )
	\TEXT{and}
	\ell( \psi \otimes \psi ) \geq 0
	\qquad
	\forall
	\psi, \tilde\psi \in H
	.
\]
It is called
antisymmetric if
$\ell( \psi \otimes \tilde\psi ) = -\ell( \tilde\psi \otimes \psi )$.
If \eqref{e:psd} holds only on a subset $\psi, \tilde\psi \in V \subset H$,
we say that $\ell$ is SPSD on $V\otimes V$ for short.

\pagebreak

\section{Derivation of the deterministic moment equations}
\label{s:sode}

\subsection{Model stochastic ODEs}
\label{s:sode:model}

Let $T > 0$, set $J := (0, T)$.
The first part of this article focusses on
the model real-valued stochastic ODEs
with {additive} noise
\[
	\label{e:sode-add}
	\rd X(t) + \lambda X(t) \rd t = \mu \rd W(t),
	\quad t \in \bar{J},
	\TEXT{with}
	X(0) = X_0
	,
\]
or with {multiplicative} noise
\[
	\label{e:sode-mul}
	\rd X(t) + \lambda X(t) \rd t = \rho X(t) \rd W(t),
	\quad t \in \bar{J},
	\TEXT{with}
	X(0) = X_0
	.
\]
Here,
\begin{itemize}
	\item
		$\lambda > 0$ is a fixed positive number
		that models the action of an elliptic operator,
	\item
		$W$
		is a
		real-valued
		Wiener process defined
		on a
		complete
		probability space $(\Omega,\cA,\IP)$,
	\item
		$\mu,\rho \geq 0$
		are parameters
		specifying the volatility
		of the noise,
	\item
		the initial value
		$X_0 \in L_2(\Omega)$
		is
		a random variable, independent of the Wiener process,
		with known first and second moments
		(but not necessarily with a known distribution).
\end{itemize}
We call $\cF_t$ the $\sigma$-algebra
generated by the Wiener process $\{ W(s) : 0 \leq s \leq t \}$
and the initial value $X_0$,
and $\cF$ the resulting filtration.
The expectation operator is denoted by $\bbE$.
We refer to~\cite{Kloeden1992, Oksendal2013}
for basic notions of stochastic
integration and It\^o calculus.

A real-valued stochastic process $X$ is said to be
a (continuous strong) solution 
of
the stochastic differential equation
``$\rd X + \lambda X = \sigma(X) \rd W$ on $\bar{J}$ with $X(0) = X_0$''
if
\textbf{a)}
$X$ is progressively measurable with respect to $\cF$,
\textbf{b)}
the expectation of
$\norm{\lambda X}{L_1(J)} + \norm{\sigma(X)}{L_2(J)}^2$
is finite,
\textbf{c)}
the integral equation
\begin{equation*}
	X(t) =  X_0 - \lambda \int_0^t X(s) \rd s + \int_0^t \sigma(X(s)) \rd W(s)
	\quad
	\forall
	t \in \bar{J}
\end{equation*}
holds ($\IP\text{-a.s.}$),
and
\textbf{d)}
$t \mapsto X(t)$ is continuous ($\IP\text{-a.s.}$).
By standard theory (\cite[Thm.~4.5.3]{Kloeden1992} or \cite[Thm.~5.2.1]{Oksendal2013})
a Lipschitz condition on $\sigma$ implies
existence and uniqueness of such a solution.
Moreover, it has finite second moments.
For future reference,
we state here the integral equations for \eqref{e:sode-add}--\eqref{e:sode-mul}:
\begin{align}
	\label{e:X-add}
	X(t) & =
	X_0 - \lambda \int_0^t X(s) \rd s + \mu \int_0^t \rd W(s)
	     &
	\forall t \in \bar{J}
	\quad
	(\IP\text{-a.s.})
	,
	\\
	\label{e:X-mul}
	X(t) & =
	X_0 - \lambda \int_0^t X(s) \rd s + \rho \int_0^t X(s) \rd W(s)
	     &
	\forall t \in \bar{J}
	\quad
	(\IP\text{-a.s.})
	.
\end{align}
The solution processes
and
their first/second moments
are known explicitly (e.g.~\cite[\S4.4]{Kloeden1992}):
\begin{subequations}
	\begin{center}
		\label{e:X-table}
		\begin{tabular}{ll|c|c}
			&
			&
			Additive \eqref{e:sode-add}/\eqref{e:X-add}
			&
			Multiplicative \eqref{e:sode-mul}/\eqref{e:X-mul}
			\\
			&
			&
			\small
			Ornstein--Uhlenbeck process
			&
			\small
			Geometric Brownian motion
			\\
			\hline
			\eqlab{e:X-table-Xt}
			&
			$X(t)$
			&
			$e^{-\lambda t} X_0 + \mu \int_0^t e^{-\lambda(t-s)} \rd W(s)$
			&
			$X_0 e^{-(\lambda + \rho^2/2) t + \rho W(t)}$
			\\
			\hline
			\eqlab{e:X-table-EXt}
			&
			$\e{ X(t) }$
			&
			$e^{-\lambda t} \e{X_0}$
			&
			$e^{-\lambda t} \e{X_0}$
			\\
			\hline
			\eqlab{e:X-table-EXsXt}
			&
			$\e{ X(s) X(t) }$
			&
			$
				e^{-\lambda(t+s)}
				\e{ X_0^2 }
				+
				\tfrac{\mu^2}{2\lambda}
				( e^{-\lambda|t - s|} - e^{-\lambda(t+s)} )
			$
			&
			$e^{-\lambda(t+s) + \rho^2 (s\wedge t)} \e{ X_0^2 }$
			\\
			\hline
			\eqlab{e:X-table-EX2}
			&
			$\e{ \norm{ X }{L_2(J)}^2 }$
			&
			$
				\tfrac{1 - e^{-2 \lambda T}}{2 \lambda}
				\e{X_0^2}
				+
				\tfrac{\mu^2}{4 \lambda^2}
				(
				e^{-2 \lambda T} + 2 \lambda T - 1
				)
			$
			&
			$
				\tfrac{
				e^{(\rho^2 - 2\lambda)T} - 1
				}{
				\rho^2 - 2\lambda
				}
				\e{X_0^2}
			$
		\end{tabular}
	\end{center}
\end{subequations}

The square integrability
\eqref{e:X-table-EX2}
in conjunction with Fubini's theorem
will be used to interchange the order of integration over $J$ and $\Omega$
without further mention.
Square integrability
also implies
the useful martingale property
(\cite[Thm.~3.2.5]{Kloeden1992} or
\cite[Cor.~3.2.6 \& Def.~3.1.4]{Oksendal2013})
\[
	\label{e:mul-M}
	\E{ \int_0^t X(r) \rd W(r) \Bigl| \cF_s }
	=
	\int_0^s X(r) \rd W(r),
	\quad
	0 \leq s \leq t
	.
\]
Choosing $s = 0$ shows
that the stochastic integral
$\int_0^t X(r) \rd W(r)$
has expectation zero.
If
$Y_1$ and $Y_2$
are
two square integrable processes
adapted to
$\cF$,
the It\^o isometry
(\cite[Thm.~3.2.3]{Kloeden1992} or \cite[Cor.~3.1.7]{Oksendal2013})
along with~\eqref{e:mul-M}
and the polarization identity
yield the equality
\[
	\label{e:itoiso}
	%
	\E{
	\int_0^s Y_1(r) \rd W(r)
	\int_0^t Y_2(r) \rd W(r)
	}
	=
	\int_0^{s \wedge t} \E{ Y_1(r) Y_2(r) } \rd r
	.
\]
These are the main tools in the derivation of \eqref{e:X-table}.
We will write $X \otimes X$ for
the real-valued stochastic process
$(s, t) \mapsto X(s) X(t)$ on $(\Omega, \cA, \bbP)$
indexed by the parameter space $J \times J$.

Our first aim will be to derive deterministic
equations for
the first and the second moments
\begin{equation*}
	%
	m(t) :=
	\e{ X(t) }
	\TEXT{and}
	M(s, t) :=
	\e{ X(s) X(t) }
	,
	\quad
	s, t \in J
	,
\end{equation*}
as well as for the covariance function
\[
	\label{e:CovX}
	\Cov(X) := \e{ (X - m) \otimes (X - m) }
	=
	M - (m \otimes m)
\]
of the stochastic process $X$.

In showing
well-posedness of the deterministic equations,
the notions \eqref{e:JSPSD}--\eqref{e:psd}
of positive semi-definiteness
will be important.
Indeed,
the second moment
and
the covariance of a real-valued stochastic process
are SPSD.
Importantly, the
SPSD functions form a cone, so that sums (and integrals) thereof remain SPSD.

%

\subsection{Deterministic first moment equations}
\label{s:sode:det}

We first introduce the spaces
\begin{equation*}
	%
	E := L_2(J)
	\TEXT{and}
	F := H^1_{0,\{T\}}(J)
	,
\end{equation*}
where the latter denotes the closed subspace of
the Sobolev space $H^1(J)$
of functions vanishing at $t = T$.
Thanks to the embedding $F \hookrightarrow C^0(\bar{J})$,
elements of $F$ will be identified
by their continuous representative.
These spaces are equipped with the $\lambda$-dependent norms
\[
	\label{e:EF-norms}
	\norm{w}{E}^2
	:=
	\lambda
	\norm{w}{L_2(J)}^2
	\TEXT{and}
	\norm{ v }{F}^2
	:=
	\lambda^{-1}
	\norm{v'}{L_2(J)}^2
	+
	\lambda
	\norm{v}{L_2(J)}^2
	+
	|v(0)|^2
	,
\]
and the obvious corresponding inner products
$\scalar{\CDOT, \CDOT}{E}$
and
$\scalar{\CDOT, \CDOT}{F}$.
The norm on $F$
is motivated by the fact that
\[
	\label{e:Fnorm-eq}
	\norm{v}{F}^2 = \lambda^{-1} \norm{-v' + \lambda v}{L_2(J)}^2
	\quad
	\forall v \in F
	.
\]

\begin{lemma}
	\label{lem:FC0}
	Let $v \in F$.
	Then
	\[
		\label{e:vt2}
		|v(t)| \leq \tfrac{1}{\sqrt{2}} \norm{v}{F}
		\quad \forall t \in \bar{J}
		.
	\]
\end{lemma}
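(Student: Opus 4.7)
The plan is to exploit the fact that a function $v\in F$ has two pieces of boundary information available, namely the value $v(0)$ (appearing explicitly in $\norm{v}{F}^2$) and the vanishing condition $v(T)=0$, and to combine them in order to gain the factor $\tfrac{1}{2}$.

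First, since $F\hookrightarrow C^0(\bar J)$ every $v\in F$ is absolutely continuous, so by the fundamental theorem of calculus
\begin{equation*}
    v(t)^2 = v(0)^2 + \int_0^t 2\,v(s)v'(s)\rd s
    \TEXT{and}
    v(t)^2 = -\int_t^T 2\,v(s)v'(s)\rd s,
\end{equation*}
where the second identity uses $v(T)=0$. Next, I would apply the weighted Young inequality $2|ab|\le \lambda a^2+\lambda^{-1}b^2$ pointwise with $a=v(s)$, $b=v'(s)$, which produces exactly the weights appearing in the definition of $\norm{\CDOT}{F}$.

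The key step is then to \emph{add} the two identities above after taking absolute values, which kills the cross integrals on $[0,t]$ and $[t,T]$ and replaces them by a single integral over all of $J$:
\begin{equation*}
    2\,|v(t)|^2 \le |v(0)|^2 + \int_0^T\bigl(\lambda\, v(s)^2 + \lambda^{-1}\,v'(s)^2\bigr)\rd s
    = \norm{v}{F}^2.
\end{equation*}
Dividing by $2$ and taking square roots yields \eqref{e:vt2}.

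The main obstacle is precisely obtaining the sharp constant $1/\sqrt{2}$ rather than the trivial $1$ that follows from either one-sided estimate alone; the trick is the symmetric use of both endpoints combined with a single application of the weighted Young inequality calibrated to the norm \eqref{e:EF-norms}. I would not expect further subtlety beyond density of smooth functions in $F$, which legitimizes the integration by parts.
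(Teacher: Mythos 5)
Your proof is correct and is essentially identical to the paper's argument: both integrate $(v^2)'=2vv'$ from $0$ to $t$ and from $t$ to $T$ (using $v(T)=0$), estimate each integrand by the weighted Young inequality $2|vv'|\le \lambda v^2+\lambda^{-1}(v')^2$, and add the two inequalities to obtain $2|v(t)|^2\le\norm{v}{F}^2$. The only cosmetic difference is that the paper phrases the argument at a point where $|v|$ attains its supremum, which is unnecessary since, as you note, the bound holds for every $t\in\bar{J}$.
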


\begin{proof}
	Suppose that the supremum of $|v(t)|$ is attained at some $0 \leq t \leq T$.
	Integrating $(v^2)' = 2 v v'$ over $(0,t)$,
	applying the Cauchy--Schwarz and the Young inequalities
	leads to the estimate
	$|v(t)|^2 \leq \lambda^{-1} \norm{v'}{}^2 + \lambda \norm{v}{}^2 + |v(0)|^2$
	in terms of the $L_2(0,t)$ norms.
	In a similar way, observing that $v(T) = 0$, we obtain
	$|v(t)|^2 \leq \lambda^{-1} \norm{v'}{}^2 + \lambda \norm{v}{}^2$
	in terms of the $L_2(t,T)$ norms.
	Adding the two inequalities gives \eqref{e:vt2}.
\end{proof}

The inequality \eqref{e:vt2} is sharp
as
the function
$\psi(t) := \sinh(\lambda(T - t)) / \sinh(\lambda T)$ attests:
\[
	\label{e:v0}
	1
	=
	\psi(0)
	=
	\sup_{t \in \bar{J}} |\psi(t)|
	\TEXT{and}
	\norm{\psi}{F} =
	\sqrt{\coth(\lambda T) + 1}
	\to
	\sqrt{2}
	\TEXT{as}
	\lambda T \to \infty
	.
\]

The deterministic moment equations
will be expressed
in terms of
the continuous bilinear form
\[
	\label{e:b}
	b \from E \times F \to \IR,
	\quad
	b(w, v) :=
	\int_J
	w(t)
	(-v'(t) + \lambda v(t))
	\rd t
	.
\]
We employ the same notation
for the induced bounded linear operator
\begin{equation*}
	b \from E \to F',
	\quad
	\duality{ b w, v }{}
	:=
	b(w, v)
	,
\end{equation*}
and use whichever is more convenient,
as should be evident from the context.
The operator $b$ arises in the weak formulation
of the ordinary differential equation $u' + \lambda u = f$.
With the definition of the norms \eqref{e:EF-norms},
it is an isometric isomorphism,
\[
	\label{e:b-iso}
	\norm{ b w }{F'} = \norm{w}{E}
	\quad
	\forall w \in E
	.
\]
Indeed,
$\norm{b w}{F'} \leq \norm{w}{E}$ is obvious from
\eqref{e:EF-norms}--\eqref{e:Fnorm-eq}.
To verify $\norm{b w}{F'} \geq \norm{w}{E}$,
let $w \in E$ be arbitrary.
Taking $v$ as the solution to the ODE
$-v' + \lambda v = \lambda w$ with $v(T) = 0$,
it follows
using \eqref{e:EF-norms}--\eqref{e:Fnorm-eq}
that
$\duality{ b w, v }{} = \norm{w}{E}^2 = \norm{v}{F}^2$.
Therefore,
$\duality{ b w, v }{} = \norm{w}{E} \norm{v}{F}$,
and in particular $\norm{b w}{F'} \geq \norm{w}{E}$.
This shows the isometry property.
By a similar argument,
$\sup_w \duality{ b w, v }{} \neq 0$ for all nonzero $v \in F$.
By \cite[Thm.~2.1]{Babuska1971}, $b$ is an isomorphism.

If a functional $\ell \in F'$ can be expressed as
$\ell(v) = \int_J g v$
for some $g \in L_1(J)$,
then
$u = b^{-1} \ell$
enjoys the representation
\[
	\label{e:ib}
	u(t) = (b^{-1} \ell)(t) = \int_0^t e^{-\lambda(t - s)} g(s) \rd s
	.
\]
Despite this integral representation,
$b^{-1}$ is not a compact operator
(it is an isomorphism).
Applying the expectation operator to
\eqref{e:X-add}--\eqref{e:X-mul}
shows
that
the first moment $m$ of the solution
satisfies the integral equation
\begin{equation*}
	m(t) = \e{ X_0 } - \lambda \int_0^t m(s) \rd{s}
	.
\end{equation*}
Testing this equation with
the derivative of an arbitrary
$v \in F$
and integrating by parts in time
shows that
the first moment of \eqref{e:sode-add}--\eqref{e:sode-mul}
solves the
deterministic variational problem
\begin{equation}
	\label{e:meaneqn}
	%
		\text{Find}
		\quad
		m \in E
		\TEXT{s.t.}
		b(m, v) = \e{ X_0 } v(0)
		\quad
		\forall v \in F.
\end{equation}

\subsection{Second moment equations: additive noise}
\label{s:sode:add}

For the deterministic equations for the second moment and the covariance
we need
the Hilbert tensor product spaces
\[
	\label{e:E2F2}
	E_2
	:=
	E \otimes_2 E
	\TEXT{and}
	F_2
	:=
	F \otimes_2 F
	,
\]
with $\norm{\CDOT}{2}$ denoting the norms on both spaces.
We further write
$\norm{\CDOT}{-2}$ for the norm of the dual
space $F_2'$ of $F_2$.
We recall the canonical isometry
(see \cite[Thm.~II.10]{ReedSimon1980} or \cite[Thm.~12.6.1]{Aubin2000})
\[
	\label{e:L2}
	E_2 =
	L_2(J) \otimes_2 L_2(J)
	\cong L_2(J \times J)
	.
\]

By virtue of square integrability \eqref{e:X-table-EX2},
the second moment $M$ is an element of $E_2$.
We define the bilinear form
\begin{equation*}
	B
	\from
	E_2 \times F_2
	\to
	\IR,
	\quad
	B := b \otimes b
	,
\end{equation*}
or explicitly as
\[
	\label{e:B}
	B(w, v)
	:=
	\int_J \int_J w(s, t)
	(-\partial_s + \lambda) (-\partial_t + \lambda) v(s, t) \rd s \rd t
	.
\]
More precisely,
$B$
is the unique continuous extension
of
$b \otimes b$
by bilinearity
from the algebraic tensor products to $E_2 \times F_2$.
Boundedness and injectivity of
the operator $B \from E_2 \to F_2'$ induced by the bilinear form $B$
follow readily from the corresponding properties of $b$,
so that the operator
$B$ is an isometry
and
its inverse is
the due continuous extension of
$b^{-1} \otimes b^{-1}$.
A representation of the inverse analogous to \eqref{e:ib} also holds.
For example,
the integral kernel of
the functional
$\ell(v) := v(0)$ is $\Dirac_0 \otimes \Dirac_0$,
which gives
$(B^{-1} \ell)(t, t') = e^{-\lambda (t + t')}$.

Recall the definitions of SPSD-ness from \eqref{e:JSPSD}--\eqref{e:psd}.

\begin{lemma}
	\label{l:psd}
	The function $U := B^{-1} \ell \in E_2$ is SPSD
	if and only if the functional $\ell \in F_2'$ is.
\end{lemma}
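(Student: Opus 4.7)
The plan is to parametrise test functions in $F$ via the backward ODE and then translate both halves of the SPSD condition across the isomorphism $B \from E_2 \to F_2'$. Concretely, given $\varphi \in E$, let $v_\varphi \in F$ denote the unique solution of the backward ODE $-v_\varphi' + \lambda v_\varphi = \varphi$ with $v_\varphi(T) = 0$. The assignment $\varphi \mapsto v_\varphi$ is a bounded bijection $E \to F$: its inverse is the map $v \mapsto -v' + \lambda v$, which by \eqref{e:Fnorm-eq} lands in $L_2(J) = E$ and is (up to the factor $\lambda^{-1}$ coming from the chosen norms) an isometry. In particular, every $\psi \in F$ is of the form $\psi = v_\varphi$ for some $\varphi \in E$.

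The workhorse is the identity
\begin{equation*}
	\ell(v_\varphi \otimes v_{\tilde\varphi})
	= B(U, v_\varphi \otimes v_{\tilde\varphi})
	= \int_J \int_J U(s, t)\, \varphi(s)\, \tilde\varphi(t) \rd s \rd t
	= \scalar{U, \varphi \otimes \tilde\varphi}{2},
	\qquad \varphi, \tilde\varphi \in E.
\end{equation*}
The first equality is $U = B^{-1}\ell$; the second is verified first on elementary tensors $U = w_1 \otimes w_2$, where $B = b \otimes b$ and $(-\partial_s + \lambda)(-\partial_t + \lambda)(v_\varphi(s) v_{\tilde\varphi}(t)) = \varphi(s)\, \tilde\varphi(t)$ reduce it to two applications of $b(w, v_\varphi) = \scalar{w, \varphi}{L_2(J)}$, and then extended to all of $E_2$ by bilinearity and continuity of $B$.

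With this identity in hand, both equivalences follow by unfolding the definitions. By the isomorphism \eqref{e:L2}, pointwise symmetry of $U \in L_2(J\times J)$ is equivalent to $\scalar{U, \varphi \otimes \tilde\varphi}{2} = \scalar{U, \tilde\varphi \otimes \varphi}{2}$ for all $\varphi, \tilde\varphi \in E$, which via the identity translates into $\ell(v_\varphi \otimes v_{\tilde\varphi}) = \ell(v_{\tilde\varphi} \otimes v_\varphi)$ for all $\varphi, \tilde\varphi \in E$; by surjectivity of $\varphi \mapsto v_\varphi$ onto $F$ this is the symmetry condition in \eqref{e:psd}. Specialising to $\tilde\varphi = \varphi$ similarly gives $\scalar{U, \varphi \otimes \varphi}{2} \geq 0$ for all $\varphi \in E$ if and only if $\ell(\psi \otimes \psi) \geq 0$ for all $\psi \in F$. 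The only care-point is the middle equality in the key identity; once this is established, the rest of the proof is a direct reading of \eqref{e:JSPSD} and \eqref{e:psd} through the bijection $\varphi \leftrightarrow v_\varphi$.
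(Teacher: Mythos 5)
Your proof is correct and follows essentially the same route as the paper: the identification $\varphi \mapsto v_\varphi$ via the backward ODE $-v_\varphi' + \lambda v_\varphi = \varphi$, $v_\varphi(T)=0$, is exactly the paper's identification of $\varphi \in L_2(J)$ with $\psi \in F$ through $\scalar{w,\varphi}{L_2(J)} = b(w,\psi)$, and the key identity $\scalar{U, \varphi\otimes\tilde\varphi}{2} = B(U, v_\varphi \otimes v_{\tilde\varphi}) = \ell(v_\varphi \otimes v_{\tilde\varphi})$ is the same one the paper uses. You simply spell out the verification on elementary tensors and the density argument that the paper leaves implicit.
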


\begin{proof}
	Identifying $\varphi \in L_2(J)$ with $\psi \in F$
	via
	$\scalar{w, \varphi}{L_2(J)} = b(w, \psi)$ for all $w \in E$,
	we observe that
	$
	\scalar{U, \varphi \otimes \tilde\varphi}{L_2(J \times J)}
	=
	B(U, \psi \otimes \tilde\psi)
	=
	\ell(\psi \otimes \tilde\psi)
	$.
	Thus $U$ is SPSD \emph{iff} $\ell$ is.
\end{proof}

Finally, we introduce the \emph{bounded} linear functional
\[
	\label{e:d}
	\diag \from F_2 \to \IR
	,
	\quad
	\diag(v) := \int_J v(t,t) \rd t
	.
\]
As in \cite[Lem.~4.1]{LangLarssonSchwab2013},
one could use
\cite[Lem.~5.1]{TodorDiss}
to show boundedness of $\diag$.
We give here an elementary and quantitative argument.
Writing $\diag(v)$ as the integral of
$\Dirac(s - s') v(s, s')$ over $J \times J$
and exploiting the representation \eqref{e:ib} of $b^{-1}$
we find
$
(B^{-1} \diag)(t, t') =
(
e^{-\lambda|t - t'|}
-
e^{-\lambda(t + t')}
)
/
(2 \lambda)
$.
Since $B$ is an isometry,
the operator norm of $\diag$ is
\begin{equation*}
	%
	\norm{\diag}{-2}
	=
	\lambda \norm{B^{-1} \diag}{L_2(J \times J)}
	=
	\tfrac{1}{4\lambda}
	(
	{
	4 \lambda T - 5
	+
	(8\lambda T + 4)
	e^{-2\lambda T}
	+
	e^{-4\lambda T}
	}
	)^{1/2}.
\end{equation*}
%
%
In particular,
this yields the asymptotics
$\norm{\diag}{-2} \sim T^2\lambda / \sqrt{6}$ for small $\lambda$
and
$\norm{\diag}{-2} \sim \sqrt{T / (4 \lambda) }$ for large $\lambda$.
%
%
%
%
In addition, the uniform bound $\norm{\diag}{-2} \leq \tfrac12 T$
holds,
see Remark \ref{r:diag}.

We are now ready to
state
the deterministic equation for the second moment
(derived for stochastic PDEs in \cite{LangLarssonSchwab2013}).

\begin{proposition}
	The second moment $M = \e{ X \otimes X }$
	of
	the solution $X$
	to the stochastic ODE~\eqref{e:sode-add}
	with additive noise
	solves the deterministic variational problem
	\[
		\label{e:U-add}
		%
		\text{Find}
		\quad
		M \in E_2
		\TEXT{s.t.}
		B(M, v)
		=
		\e{ X_0^2 } v(0)
		+
		\mu^2 \diag(v)
		\quad
		\forall v \in F_2
		.
	\]
\end{proposition}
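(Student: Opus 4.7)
The plan is to verify the variational identity first on the algebraic tensor product $F \otimes F$ with tensor test functions $v = \phi \otimes \psi$, and then extend to all of $F_2$ by density and continuity.

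First I would establish the almost sure identity
\begin{equation*}
b(X, \psi) = X_0 \psi(0) + \mu \int_0^T \psi(t) \rd W(t) \quad \text{for all } \psi \in F,
\end{equation*}
by applying It\^o's product rule to the process $t \mapsto X(t) \psi(t)$. Since $\psi$ is deterministic, its covariation with $X$ vanishes and \eqref{e:sode-add} yields
\begin{equation*}
\rd \bigl( X(t) \psi(t) \bigr) = \bigl( X(t) \psi'(t) - \lambda X(t) \psi(t) \bigr) \rd t + \mu \psi(t) \rd W(t).
\end{equation*}
Integrating over $\bar J$, invoking $\psi(T) = 0$, and rearranging yields the claim.

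Next I would form the product $b(X, \phi)\, b(X, \psi)$ for $\phi, \psi \in F$ and take expectations. The cross terms $\mu \psi(0)\, \e{X_0 \int_0^T \phi \rd W}$ and $\mu \phi(0)\, \e{X_0 \int_0^T \psi \rd W}$ vanish by independence of $X_0$ from $W$ combined with the zero-mean property of the It\^o integral (\eqref{e:mul-M} with $s = 0$), while the remaining stochastic-integral product is evaluated by the It\^o isometry \eqref{e:itoiso}. This delivers
\begin{equation*}
\E{ b(X, \phi)\, b(X, \psi) } = \e{X_0^2}\, \phi(0) \psi(0) + \mu^2 \int_0^T \phi(t) \psi(t) \rd t.
\end{equation*}
By Fubini's theorem, justified through the square integrability \eqref{e:X-table-EX2}, the left-hand side coincides with $B(M, \phi \otimes \psi)$, while the right-hand side equals $\e{X_0^2}\, (\phi \otimes \psi)(0,0) + \mu^2 \diag(\phi \otimes \psi)$. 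Hence \eqref{e:U-add} holds on the algebraic tensor product $F \otimes F$.

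Finally, both sides of the resulting identity extend to continuous linear functionals on $F_2$: the left-hand side because $M \in E_2$ and $B \from E_2 \to F_2'$ is an isometry, the right-hand side because two applications of Lemma~\ref{lem:FC0} show that point evaluation at $(0,0)$ is bounded on $F \otimes F$ with constant $\tfrac{1}{2}$ and $\diag$ is bounded by the estimate following~\eqref{e:d}. Density of $F \otimes F$ in $F_2$ then transfers the equality to all $v \in F_2$. The main delicate point is the It\^o computation in the first step together with the ensuing Fubini exchange of expectation with the two-variable integral; once these are in place, the remaining steps are routine bookkeeping.
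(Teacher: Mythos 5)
Your proposal is correct and follows essentially the same route as the paper: establish the pathwise identity $b(X,\psi) = X_0\psi(0) + \mu\int_J \psi\,\rd W$, then compute $\e{b(X,\phi)\,b(X,\psi)} = B(M,\phi\otimes\psi)$ using the vanishing of the cross terms and the It\^o isometry \eqref{e:itoiso}. The only differences are cosmetic: you obtain the key identity via It\^o's product rule on $t\mapsto X(t)\psi(t)$ rather than by inserting the integral equation \eqref{e:X-add} into $b(X,v)$ and integrating by parts, and you spell out the density/continuity extension from $F\otimes F$ to $F_2$, which the paper leaves implicit.
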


\begin{proof}
	Inserting the solution \eqref{e:X-add}
	in the first argument of
	$b(X, v) = \int_J \{ - X v' + \lambda X v \}$
	and integrating it by parts one finds
	\begin{equation*}
		b(X, v)
		=
		X_0 v(0)
		- \mu \int_J W(t) v'(t) \rd t
		=
		X_0 v(0)
		+
		\mu \int_J v(t) \rd W(t)
		\quad
		\forall v \in F
        \quad
		(\IP\text{-a.s.})
		,
	\end{equation*}
	where the stochastic
	integration by parts formula \cite[Thm.~4.1.5]{Oksendal2013}
	was used
	in the second equality.
	Employing this in
	$B(M, v_1 \otimes v_2) = \e{ b(X, v_1) b(X, v_2) }$
	with \eqref{e:itoiso} for the $\mu^2$ term
	leads to the desired conclusion.
\end{proof}

From the equations for the first and second moments,
an equation for the covariance function
$\Cov(X) \in E_2$ follows:
\begin{equation*}
	%
	B(\Cov(X), v)
	=
	\Cov(X_0) v(0)
	+
	\mu^2 \diag(v)
	\quad
	\forall v \in F_2
	.
\end{equation*}
The proof is straightforward and is therefore omitted.

\subsection{Second moment equations: multiplicative noise}
\label{s:sode:mul}

%

Before proceeding with the second moment
equation for the case of multiplicative noise,
we formulate a lemma
which repeats the derivation of the first moment equation
\eqref{e:meaneqn}
without taking the expectation first.

\begin{lemma}
	\label{l:cBX}
	Let $X$ be the solution~\eqref{e:X-mul}
	to the stochastic ODE \eqref{e:sode-mul}.
	Then
	\[
		\label{e:l:cBX}
		b(X,v)
		=
		X_0 v(0)
		-
		\rho \int_J \left( \int_0^t X(r) \rd W(r) \right) v'(t) \rd t
		\quad
		\forall
		v \in F
		\quad
		(\IP\text{-a.s.})
		.
	\]
\end{lemma}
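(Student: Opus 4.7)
My plan is to follow the template used for the additive case~\eqref{e:U-add}: substitute the integral equation~\eqref{e:X-mul} into $b(X,v)$ and rearrange by classical, path-wise integration by parts. The key idea is to keep the whole It\^o term packaged as $N(t):=\int_0^t X(r)\rd W(r)$ throughout; since~\eqref{e:l:cBX} already places $N$ inside a Lebesgue integral against $v'$, the stochastic integration by parts invoked in the additive derivation will not be needed here.

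Concretely, starting from $b(X,v)=-\int_J X(t)v'(t)\rd t+\lambda\int_J X(t)v(t)\rd t$ (definition~\eqref{e:b}), I would substitute $X(t)=X_0-\lambda\int_0^t X(s)\rd s+\rho N(t)$ into the first integral. This produces four $\IP$-a.s.\ well-defined path-wise integrals. The purely $X_0$-contribution $-X_0\int_J v'(t)\rd t$ collapses to $X_0 v(0)$ by the boundary condition $v(T)=0$ built into $F$. For the drift double integral $\lambda\int_J\bigl(\int_0^t X(s)\rd s\bigr)v'(t)\rd t$, classical integration by parts in $t$ eliminates the boundary terms (using $v(T)=0$ and $Y(0)=0$ where $Y(t):=\int_0^t X(s)\rd s$) and produces exactly $-\lambda\int_J X(t)v(t)\rd t$, which cancels the surviving $\lambda\int_J X v$ term. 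What remains is $X_0 v(0)-\rho\int_J N(t)v'(t)\rd t$, which is \eqref{e:l:cBX}.

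The main point to verify is the $\IP$-a.s.\ path-wise legitimacy of these manipulations. By item~(d) in the solution definition, $t\mapsto X(t)$ is continuous on $\bar J$ $\IP$-a.s., whence $Y\in C^1(\bar J)$ $\IP$-a.s.\ and the drift integration by parts is classical. For the stochastic term, the integral equation~\eqref{e:X-mul} gives (if $\rho\neq 0$) the representation $\rho N(t)=X(t)-X_0+\lambda Y(t)$, so $N$ inherits path-wise continuity on $\bar J$ and $\int_J N\,v'\rd t$ is a bona fide Lebesgue pairing of a continuous function with an $L_2$ function, which requires no further justification; the case $\rho=0$ is trivial. A minor bookkeeping point is that the statement ``$\forall v\in F$ $\IP$-a.s.''\ is established in the usual way by first fixing $v$ in a countable dense subset of $F$ and then appealing to continuity of both sides in $v$ (exactly as tacitly done in the additive proof).
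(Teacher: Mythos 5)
Your proposal is correct and follows essentially the same route as the paper: substitute the integral equation~\eqref{e:X-mul} into the $\int_J X v'$ term, integrate the $X_0$ and drift contributions by parts using $v(T)=0$ so that the drift term cancels against $\lambda\int_J Xv$, and leave the stochastic integral untouched. Your additional remarks on path-wise continuity of $N$ and the ``$\forall v\in F$ $\IP$-a.s.'' quantifier merely make explicit details the paper leaves tacit.
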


\begin{proof}
	Let $v \in F$.
	We employ the definition~\eqref{e:X-mul}
	of the solution
	in the first term
	of $b(X, v)$
	and
	integration by parts on the first two summands of the integrand
	to obtain
	(observing that the terms at $t = T$ vanish due to $v(T) = 0$)
	\begin{align*}
		\int_J X(t) v'(t) \rd t
		  & =
		\int_J
		\left(
		X_0 - \int_0^t \lambda X(r) \rd r + \int_0^t \rho X(r) \rd W(r)
		\right)
		v'(t)
		\rd t
		\\
		  & =
		- X_0 v(0)
		+
		\lambda \int_J X(t) v(t) \rd t
		+
		\rho
		\int_J \left( \int_0^t X(r) \rd W(r) \right) v'(t) \rd t
		\quad
		(\IP\text{-a.s.})
		.
	\end{align*}
	Inserting this expression in the definition~\eqref{e:b}
	of $b(X, v)$
	yields the claimed formula.
\end{proof}

The next ingredient in
the second moment equation
for the case of multiplicative noise,
which appears
due to the integral term in \eqref{e:l:cBX},
is
the bilinear form
\[
	\label{e:def:DIAG}
	\DIAG(w, v) :=
	\int_J w(t,t) v(t,t) \rd t
	,
	\quad
	w \in E \otimes E,
	\quad
	v \in F \otimes F
	,
\]
referred to as the trace product.
Again, we use the same symbol for the induced operator, where convenient.
Here,
$\otimes$ denotes
the algebraic tensor product.
The expression \eqref{e:def:DIAG}
is meaningful
because functions in $F \subset H^1(J)$
are bounded.
As we will see in Lemma \ref{l:Delta},
this bilinear form extends
continuously
to a form
\[
	\label{e:D}
	\DIAG
	\from E_\pi \times F_\epsilon
	\to \IR
\]
on
the projective and the injective tensor product spaces
\[
	\label{e:EpiFeps}
	E_\pi := E \otimes_\pi E
	\TEXT{and}
	F_\epsilon := F \otimes_\epsilon F
	.
\]
These spaces are defined as
the closure of the algebraic tensor product
under
the projective norm
\[
	\label{e:pi}
	\norm{w}{\pi}
	  & :=
	\inf
	\left\{
	\textstyle
	\sum_i \norm{w_i^1}{E} \norm{w_i^2}{E}
	:
	w = \sum_i w_i^1 \otimes w_i^2
	\right\},
\]
and the injective norm
\[
	\label{e:eps}
	\norm{v}{\epsilon}
	  & :=
	\sup
	\left\{
	|(g_1 \otimes g_2)(v)|
	:
	g_1, g_2 \in S(F')
	\right\}
	,
\]
respectively.
Note that, initially, these norms are
defined
on the algebraic tensor product space.
In particular, the sums in~\eqref{e:pi} are finite
and the action of $g_1 \otimes g_2$ in~\eqref{e:eps}
is well-defined.
The spaces in~\eqref{e:EpiFeps} are separable Banach spaces.
They are reflexive
if and only if their dimension
is finite
\cite[Thm.~4.21]{Ryan2002}.
By~\cite[Prop.~6.1(a)]{Ryan2002},
these tensor norms satisfy
\[
	\label{e:simple}
	\norm{w_1 \otimes w_2}{\pi} = \norm{w_1}{E} \norm{w_2}{E}
	\quad \TEXT{and} \quad
	\norm{v_1 \otimes v_2}{\epsilon} = \norm{v_1}{F} \norm{v_2}{F}
	,
\]
as well as
\[
	\label{e:order}
	\norm{\CDOT}{2} \leq \norm{\CDOT}{\pi}
	\TEXT{on}
	E \otimes E
	{\quad}\TEXT{and}{\quad}
	\norm{\CDOT}{\epsilon} \leq \norm{\CDOT}{2}
	\TEXT{on}
	F \otimes F
	.
\]
We write $\norm{\CDOT}{-\epsilon}$ for the norm
of the continuous dual $F_\epsilon' := (F_\epsilon)'$.

\begin{example}
	\label{x:RN}
	%
	Consider $V := \IR^N$ with the Euclidean norm.
	Elements $A \in V \otimes V$ can be identified with $N \times N$ real matrices.
	Let $\sigma(A)$ denote the singular values of $A$.
	The projective, the Hilbert, and the injective norms
	on $V \otimes V$
	are
	the nuclear norm
	$\norm{A}{\pi} = \sum_{s \in \sigma(A)} s$,
	the Frobenius norm
	$\norm{A}{2} = (\sum_{s \in \sigma(A)} s^2)^{1/2}$,
	and
	the operator norm
	$\norm{A}{\epsilon} = \max \sigma(A)$,
	respectively.
	They are also known as the Schatten $p$-norms
	with $p = 1$, $2$, and $\infty$.
	Evidently,
	$\norm{\CDOT}{\pi} \geq \norm{\CDOT}{2} \geq \norm{\CDOT}{\epsilon}$.
\end{example}

If a function $w \in E_2$ is SPSD \eqref{e:JSPSD},
the operator
$S_w \from E \to E$
defined by
$S_w \varphi := \int_J w(s, \CDOT) \varphi(s) \rd s$
is self-adjoint and positive semi-definite.
%
%
%
Let
$
\{ s_n \}_n
\subset
\IntervalCO{0, \infty}
$
denote its eigenvalues.
If their sum is finite then
the operator is trace-class
and
$\norm{w}{\pi} = \sum_n s_n$,
see \cite[Thm.~9.1.38 and comments]{Palmer2001}.
We note that the correspondence
between symmetric positive semi-definite kernels, covariances, and
trace-class operators was
already observed
in~\cite{Mercer1909},
\cite[Thm.~XI.37.1.A]{Loeve1978}
and~\cite[Thm.~A.8 and p.~363]{PeszatZabczyk2007}
and extended to case of Hilbert space valued
kernels in~\cite[Thm.~2.3 and Cor.~2.4]{SchwabTodor2006}.
For our purposes, the following specialization will be particularly useful.

\begin{lemma}
	\label{l:2andpinorm}
	If $w \in E_\pi$ is SPSD
	then
	$\norm{w}{\pi} = \lambda \diag(w)$
	with
	$\diag$ from \eqref{e:d}.
\end{lemma}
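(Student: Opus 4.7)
The plan is to prove $\norm{w}{\pi} = \lambda \diag(w)$ by sandwiching $\norm{w}{\pi}$ between matching estimates: the upper bound comes from a Mercer-type representation of $w$, while the lower bound comes from a Cauchy--Schwarz and duality argument. Along the way I will extend $\diag$, originally defined on $F_2$, to a bounded functional on $E_\pi$. The common setup: since $w \in E_\pi \subset E_2$ is SPSD, the operator $S_w$ on $L_2(J)$ is self-adjoint, positive semi-definite, and by the discussion preceding the lemma trace class, so the spectral theorem produces an $L_2(J)$-orthonormal eigenbasis $\{\varphi_n\}$ with eigenvalues $s_n \geq 0$ and $\sum_n s_n < \infty$, and the representation $w = \sum_n s_n \, \varphi_n \otimes \varphi_n$. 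The tail bound $\Norm{\sum_{n > N} s_n \, \varphi_n \otimes \varphi_n}{\pi} \leq \sum_{n > N} s_n \norm{\varphi_n}{E}^2 = \lambda \sum_{n > N} s_n \to 0$ shows that this series converges in $E_\pi$ itself, not merely in $E_2$.

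Upper bound. Inserting the Mercer series into \eqref{e:pi} and invoking \eqref{e:simple} gives $\norm{w}{\pi} \leq \sum_n s_n \norm{\varphi_n}{E}^2 = \lambda \sum_n s_n$. Monotone convergence applied to the nonnegative pointwise series $\sum_n s_n |\varphi_n(t)|^2$ places it in $L_1(J)$ with integral $\sum_n s_n$, which is the intrinsic meaning of $\diag(w) = \int_J w(t,t) \rd t$ for SPSD $w \in E_\pi$. Hence $\norm{w}{\pi} \leq \lambda \diag(w)$.

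Lower bound. On rank-one tensors $u \otimes v \in E \otimes E$ the literal formula gives $\diag(u \otimes v) = \scalar{u,v}{L_2(J)}$, and Cauchy--Schwarz together with \eqref{e:simple} yields $|\diag(u \otimes v)| \leq \norm{u}{L_2(J)} \norm{v}{L_2(J)} = \lambda^{-1} \norm{u \otimes v}{\pi}$. Bilinearity and the infimum in \eqref{e:pi} propagate this to $|\diag(w)| \leq \lambda^{-1} \norm{w}{\pi}$ on the algebraic tensor product, and density extends $\diag$ to a bounded linear functional on $E_\pi$ with the same bound. For SPSD $w$, the Mercer partial sums force $\diag(w) = \sum_n s_n \geq 0$, and therefore $\lambda \diag(w) \leq \norm{w}{\pi}$, which together with the upper bound closes the identity.

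The main obstacle is the coherent interpretation of $w(t,t)$ for $w \in E_\pi$, since elements of $E_2 = L_2(J \times J)$ have no pointwise trace on the measure-zero diagonal $\{s = t\}$. The SPSD hypothesis is precisely what rescues the argument: nonnegativity of each Mercer summand activates monotone convergence, producing an intrinsic nonnegative $\diag(w)$ that agrees with the duality extension used in the lower-bound step, so that the two bounds meet without any ambiguity in the defining formula.
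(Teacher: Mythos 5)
Your proof is correct, and it takes a more self-contained route than the paper. The paper's argument is a one-liner: it cites the identity $\norm{w}{\pi} = \sum_n s_n$ for SPSD trace-class kernels (from Palmer's book, as recalled in the discussion preceding the lemma) and then simply computes $\lambda \diag(w) = \sum_n s_n$ on the eigen-expansion. You instead re-derive that identity by a two-sided sandwich: the upper bound $\norm{w}{\pi} \leq \lambda \sum_n s_n$ is the triangle inequality applied to the Mercer series together with \eqref{e:simple}, and -- this is the genuinely different ingredient -- the lower bound uses $\diag$ itself as a dual certificate, via $|\diag(u \otimes v)| = |\scalar{u,v}{L_2(J)}| \leq \lambda^{-1}\norm{u \otimes v}{\pi}$ and the infimum in \eqref{e:pi}, which is exactly the estimate the paper only records later in Remark~\ref{r:diag}. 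What your approach buys is twofold: it removes the external reference to the nuclear-norm/trace identity, and it squarely addresses a point the paper's proof glosses over, namely what $\diag(w) = \int_J w(t,t)\,\rd t$ even means for $w \in E_\pi \subset L_2(J \times J)$, whose diagonal trace is a priori undefined; your monotone-convergence construction and its consistency with the density extension settle this. What the paper's route buys is brevity and a direct link to the classical Mercer/trace-class literature. One small remark: the finiteness of $\sum_n s_n$ for $w \in E_\pi$ (needed before monotone convergence is invoked) deserves a half-sentence -- it follows either from the nuclearity of $S_w$ for $w \in E_\pi$, or directly from your own lower-bound estimate applied to the spectral partial sums combined with the stability \eqref{e:wpm} -- but this is a presentational gap, not a mathematical one.
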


\begin{proof}
	Let $\{ e_n \}_n$ be an orthonormal basis of $E$ consisting
	of eigenvectors of $S_w$ with the eigenvalues $\{s_n\}_n$.
	By symmetry,
	$w = \sum_n s_n e_n \otimes e_n$.
	Since $\lambda \diag(e_n \otimes e_n) = 1$,
	we have
	$
	\lambda \diag(w) = \sum_n s_n = \norm{w}{\pi}
	$.
\end{proof}

An arbitrary $w \in E_\pi$
can be decomposed
(via the corresponding integral operator)
as $w = w^+ - w^- + w^a$
with
SPSD $w^\pm \in E_\pi$
and
an
antisymmetric $w^a \in E_\pi$.
This decomposition is stable in the sense that
\[
	\label{e:wpm}
	\norm{w^a}{\pi}
	\leq \norm{w}{\pi}
	\TEXT{and}
	\norm{w^+ - w^-}{\pi} = \norm{w^+}{\pi} + \norm{w^-}{\pi}
	\leq \norm{w}{\pi}
	.
\]

The tensor product spaces $E_\pi$ and $F_\epsilon$
seem necessary
because the trace product
$\DIAG$ is \emph{not} continuous
on the Hilbert tensor product spaces
$E_2 \times F_2$
as the following example illustrates.
\begin{example}
	\label{x:E2F2counter}
	To simplify the notation, suppose $T = 1$,
	so that $J = (0,1)$.
	Define $v \in F_2$
	by
	$v(s, t):=(1 - s)(1 - t)$ for $s, t \in J$.
	Consider
	the sequence $u_1, u_2, \ldots$ of indicator functions
	\begin{equation*}
		u_n(s, t) :=
		\chi_{A_n}(s, t),
		\TEXT{where}
		A_n :=
		\left(0, \tfrac{1}{n} \right)^2
		\cup
		\left(\tfrac{1}{n}, \tfrac{2}{n} \right)^2
		\cup
		\cdots
		\cup
		\left(\tfrac{n-1}{n}, 1 \right)^2
		\subset
		J \times J
		.
	\end{equation*}
	In view of the canonical isometry \eqref{e:L2},
	this sequence
	is a null sequence in $E_2$.
	However,
	$\DIAG(u_n, v) = \int_J u_n(t,t) v(t,t) \rd t = \tfrac{1}{3}$
	for all $n \geq 1$.
	Therefore, $\DIAG(\CDOT, v)$ is not continuous on $E_2$.

	The example additionally shows that
	$\DIAG$ is not continuous on $E_\epsilon \times F_\pi$ either,
	since by \eqref{e:simple}--\eqref{e:order}
	we have
	$\norm{v}{\pi} = \norm{v}{2}$,
	while
	$\norm{u_n}{\epsilon} \leq \norm{u_n}{2} \to 0$
	as $n\to\infty$.

	By contrast, $\{ u_n \}_{n \geq 1}$ is not a null sequence in $E_\pi$.
	Indeed, Lemma~\ref{l:2andpinorm} gives $\norm{u_n}{\pi} = \lambda$
	for all $n \geq 1$.
\end{example}

\begin{lemma}
	\label{l:Delta}
	The trace product
	$\DIAG$ in \eqref{e:D} is continuous on $E_\pi \times F_\epsilon$
	with $\norm{\DIAG}{} \leq 1/(2\lambda)$.
\end{lemma}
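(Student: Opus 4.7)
The plan is to bound $|\DIAG(w,v)|$ first on the algebraic tensor product $E\otimes E \times F\otimes F$, and then extend by continuity using that the algebraic tensor product is dense in each of $E_\pi$ and $F_\epsilon$. The bound should come out as $|\DIAG(w,v)| \leq \tfrac{1}{2\lambda}\|w\|_\pi \|v\|_\epsilon$, which implies both continuity and the operator norm estimate.

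The key preliminary step is a pointwise-on-the-diagonal bound for $v \in F \otimes F$: namely, $|v(t,t)| \leq \tfrac12 \|v\|_\epsilon$ for every $t \in \bar{J}$. To see this, for each $t$ the point-evaluation $\phi_t \from F \to \IR$, $\phi_t(\psi) := \psi(t)$, belongs to $F'$ with $\|\phi_t\|_{F'} \leq 1/\sqrt{2}$ by Lemma~\ref{lem:FC0}. Since $v(t,t) = (\phi_t \otimes \phi_t)(v)$ on elementary tensors and extends bilinearly, the definition~\eqref{e:eps} of $\norm{\CDOT}{\epsilon}$ with $g_1 = g_2 = \phi_t/\|\phi_t\|_{F'}$ gives the claimed bound. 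This is the step I expect to be the crux of the argument, since it is what makes the injective norm natural here.

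Next, for an elementary tensor $w = w_1 \otimes w_2 \in E \otimes E$ and any $v \in F \otimes F$, the diagonal bound together with the Cauchy--Schwarz inequality in $L_2(J)$ yields
\begin{equation*}
|\DIAG(w_1 \otimes w_2, v)|
= \left| \int_J w_1(t) w_2(t) v(t,t) \rd t \right|
\leq \tfrac{1}{2}\|v\|_\epsilon \|w_1\|_{L_2(J)} \|w_2\|_{L_2(J)}
= \tfrac{1}{2\lambda}\|v\|_\epsilon \|w_1\|_E \|w_2\|_E,
\end{equation*}
using the definition~\eqref{e:EF-norms} of $\norm{\CDOT}{E}$ in the last step. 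For a general $w \in E \otimes E$ written as $w = \sum_i w_i^1 \otimes w_i^2$, the triangle inequality gives $|\DIAG(w,v)| \leq \tfrac{1}{2\lambda}\|v\|_\epsilon \sum_i \|w_i^1\|_E \|w_i^2\|_E$, and taking the infimum over all representations of $w$ yields $|\DIAG(w,v)| \leq \tfrac{1}{2\lambda}\|w\|_\pi \|v\|_\epsilon$ by~\eqref{e:pi}.

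Finally, I would extend $\DIAG$ by continuity. The bound just derived shows that $\DIAG$ is a bounded bilinear form on the dense subspace $(E\otimes E) \times (F\otimes F) \subset E_\pi \times F_\epsilon$, hence it has a unique bilinear extension to all of $E_\pi \times F_\epsilon$ with operator norm at most $1/(2\lambda)$. This delivers both~\eqref{e:D} and the asserted norm bound.
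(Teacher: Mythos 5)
Your proposal is correct and follows essentially the same route as the paper: the pointwise diagonal bound $|v(t,t)|\leq\tfrac12\norm{v}{\epsilon}$ via the point-evaluation functionals of norm $\leq 1/\sqrt{2}$ (Lemma~\ref{lem:FC0}), Cauchy--Schwarz on elementary tensors, and extension by density. The only cosmetic difference is that the paper reduces to rank-one $w$ by citing Schatten, whereas you handle general $w\in E\otimes E$ directly by the triangle inequality and the infimum in the definition~\eqref{e:pi} of the projective norm — which amounts to the same thing and is, if anything, more self-contained.
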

\begin{proof}
	By density
	it suffices to bound
	$\DIAG(w, v)$
	for arbitrary
	$w \in E \otimes E$ and
	$v \in F \otimes F$.
	By \cite[Thm.~2.4]{Schatten1950}
	we may assume that $w = w^1 \otimes w^2$.
	We note first that
	the point evaluation functionals $\Dirac_t : v \mapsto v(t)$ have norm $1/\sqrt{2}$ on $F$ by
	\eqref{e:vt2}.
	Therefore,
	if $v = \sum_j v_j^1 \otimes v_j^2$ then
	\[
		\label{e:vtt}
		\textstyle
		| v(s, t) |
		=
		|
		\sum_j \Dirac_s(v_j^1) \Dirac_t(v_j^2)
		|
		\leq
		\sup
		\{
		\tfrac12
		|
		\sum_j g_1(v_j^1) g_2(v_j^2)
		|
		:
		g_1, g_2 \in S(F')
		\}
		=
		\tfrac12
		\norm{v}{\epsilon}
	\]
	and the continuity of $\DIAG$ follows:
	\begin{equation*}
		\textstyle
		|\DIAG(w, v)|
		=
		\left|
		\int_J w(t,t) v(t,t) \rd t
		\right|
		\leq
		\tfrac12 \norm{v}{\epsilon} \int_J |w(t,t)| \rd t
		\leq
		\tfrac{1}{2 \lambda} \norm{v}{\epsilon} \norm{w}{\pi}
		,
	\end{equation*}
	where the integral Cauchy--Schwarz inequality on $w(t,t) = w^1(t) w^2(t)$ was used in the last step,
	together with
	the fact that $\lambda \norm{w^1}{L_2(J)} \norm{w^2}{L_2(J)} = \norm{w^1}{E} \norm{w^2}{E} = \norm{w^1 \otimes w^2}{\pi}$.
\end{proof}

We note that the bound $\norm{\DIAG}{} \leq 1 / (2\lambda)$
is sharp:
For $\eta > 0$
take
$w = \varphi \otimes \varphi$ with $\varphi := \chi_{(0, \eta)} / \sqrt{\eta}$
and
$v = \psi \otimes \psi$ with $\psi(t) := \sinh(\lambda(T - t)) / \sinh(\lambda T)$
as in
\eqref{e:v0}.
Then $\lim_{\eta \to 0} \DIAG(w, v) = 1$
and
$\lim_{T \to \infty} \norm{v}{\epsilon} \norm{w}{\pi} = 2\lambda$,
and the bound
is tight when applying both limits.

\begin{remark}
	\label{r:diag}
	Consider the functional
	$\diag$ from \eqref{e:d}.
	Since $\diag = \DIAG (1 \otimes 1)$ and $\norm{1 \otimes 1}{\pi} = \lambda T$,
	we have $\norm{\diag \from F_\epsilon \to \IR}{-\epsilon} \leq T/2$.
	In view of $\norm{\CDOT}{\epsilon} \leq \norm{\CDOT}{2}$ from \eqref{e:order},
	we find
	$\norm{\diag \from F_2 \to \IR}{-2} \leq T/2$.
	Finally, $\norm{\diag \from E_\pi \to \IR}{-\pi} = 1 / \lambda$
	by
	the integral Cauchy--Schwarz inequality
	and
	Lemma~\ref{l:2andpinorm}.
\end{remark}

A crucial observation is that
the second moment $M$ lies
not only in the Hilbert tensor product space $E_2$
but in the smaller projective tensor product space,
$M \in E_\pi$.
This follows by passing the norm under
the expectation
$
\norm{ \e{ X \otimes X } }{\pi}
\leq
\e{ \norm{ X \otimes X }{\pi} }
$,
then using \eqref{e:simple} and
the square integrability~\eqref{e:X-table-EX2} of $X$.

We recall here
from \cite[Thm.~2.5 and Thm.~5.13]{Schatten1950}
the fact that
\begin{equation*}
	%
	F_\epsilon' =
	(F \otimes_\epsilon F)' \cong F' \otimes_\pi F'
	\quad
	\text{isometrically}
	,
\end{equation*}
(whereas
the space $(F')_\epsilon$
is isometric to a proper subspace of $(F_\pi)'$,
see \cite[pp.~23/46]{Ryan2002}).
A corollary of this representation
is that
\[
	\label{e:bob-epi}
	b \otimes b
	\from
	E_\pi \to F_\epsilon'
	\quad
	\text{defines an isometric isomorphism}
	,
\]
because $b \otimes b$
extends to
an isometric isomorphism
from $E \otimes_\pi E$ onto $F' \otimes_\pi F'$.
We call it also $B$.
This isometry property \eqref{e:bob-epi},
Lemma~\ref{l:psd}
and
Lemma~\ref{l:2andpinorm}
produce the useful identity
\[
	\label{e:eps2del}
	\norm{ \ell }{-\epsilon}
	=
	\norm{ B^{-1} \ell }{\pi}
	=
	\lambda
	\diag( B^{-1} \ell )
\]
for any $\ell \in F_\epsilon'$ which is SPSD \eqref{e:psd}.
Here and below,
Lemma~\ref{l:psd} applies to functionals in $F_\epsilon'$
mutatis mutandis.
Using the decomposition from \eqref{e:wpm}
we can decompose
any $\ell = \ell^+ - \ell^- + \ell^a$
into SPSD
and antisymmetric
parts
with
\[
	\label{e:lpm}
	\norm{\ell^a}{-\epsilon} \leq \norm{\ell}{-\epsilon}
	\TEXT{and}
	\norm{\ell^+ - \ell^-}{-\epsilon}
	=
	\norm{\ell^+}{-\epsilon} + \norm{\ell^-}{-\epsilon}
	\leq
	\norm{\ell}{-\epsilon}
	.
\]

Now we are in position to introduce
the bilinear form
\[
	\label{e:cB}
	\cB
	\from E_\pi \times F_\epsilon \to \IR
	,
	\quad
	\cB
	:=
	B
	-
	\rho^2
	\DIAG
	,
\]
or more explicitly,
\begin{equation*}
	\cB(w, v)
	=
	\int_J \int_J w(s, t)
	(-\partial_s + \lambda) (-\partial_t + \lambda) v(s, t)
	\rd s \rd t
	-
	\rho^2 \int_J w(t,t) v(t,t) \rd t
	.
\end{equation*}

The reason for this definition
is the following result from
\cite[Thm.~4.2]{KirchnerLangLarsson2016}
derived there for stochastic {PDEs}.
The simplified proof is given here for completeness.

\begin{proposition}
	\label{p:U-mul}
	The second moment $M = \e{ X \otimes X }$
	of
	the solution $X$
	to the stochastic ODE~\eqref{e:sode-mul} with multiplicative noise
	solves the deterministic variational problem
	\[
		\label{e:U-mul}
		%
		\text{Find}
		\quad
		M \in E_\pi
		\quad
		\text{s.t.}
		\quad
		\cB(M, v) = \e{ X_0^2 } v(0)
		\quad
		\forall v \in F_\epsilon
		.
	\]
\end{proposition}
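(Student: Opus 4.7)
The plan is to test the identity $B(M, v_1 \otimes v_2) = \E{ b(X, v_1) b(X, v_2) }$ on an elementary tensor $v_1 \otimes v_2 \in F \otimes F$, using the pathwise representation $b(X, v_j) = X_0 v_j(0) - \rho \int_J \bigl( \int_0^t X(r) \rd W(r) \bigr) v_j'(t) \rd t$ furnished by Lemma~\ref{l:cBX}. Expanding the product yields four terms: the deterministic one contributes $\E{X_0^2} v_1(0) v_2(0)$; the two cross terms are $X_0$ (which is $\cF_0$-measurable and independent of $W$) multiplied by a mean-zero stochastic integral, so they vanish in expectation by the martingale property~\eqref{e:mul-M} at $s = 0$ together with Fubini. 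Applying the It\^o isometry~\eqref{e:itoiso} to the remaining $\rho^2$-term produces
\begin{equation*}
\rho^2 \int_J \int_J \int_0^{s \wedge t} M(r, r) \rd r \; v_1'(s) v_2'(t) \rd s \rd t,
\end{equation*}
where $M(r, r) = \E{X(r)^2}$ is well defined since the geometric Brownian motion has finite second moments.

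The central calculation is a Fubini-type identity that reduces this triple integral to the trace product $\DIAG(M, v_1 \otimes v_2)$. Writing $\mathbf{1}_{r < s \wedge t} = \mathbf{1}_{r < s} \mathbf{1}_{r < t}$ and exchanging the order of integration yields $\int_J M(r, r) (v_1(T) - v_1(r)) (v_2(T) - v_2(r)) \rd r$, which collapses to $\int_J M(r, r) v_1(r) v_2(r) \rd r = \DIAG(M, v_1 \otimes v_2)$ because $v_1(T) = v_2(T) = 0$ by virtue of $v_1, v_2 \in F$. Subtracting the $\rho^2 \DIAG$ contribution from $B(M, v_1 \otimes v_2)$ then delivers the desired identity $\cB(M, v_1 \otimes v_2) = \E{X_0^2} v_1(0) v_2(0)$ on the algebraic tensor product $F \otimes F$.

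The remaining step is extension by density. The algebraic tensor product $F \otimes F$ is dense in $F_\epsilon$, and both sides of the claimed equation are continuous linear functionals of $v$: on the left, $B(M, \CDOT)$ lies in $F_\epsilon'$ via the isometry~\eqref{e:bob-epi} combined with $M \in E_\pi$, itself obtained by passing the projective norm under the expectation, using \eqref{e:simple} and the square integrability~\eqref{e:X-table-EX2}; $\DIAG(M, \CDOT)$ is bounded on $F_\epsilon$ by Lemma~\ref{l:Delta}; and on the right, $v \mapsto v(0, 0)$ coincides with the action of $\Dirac_0 \otimes \Dirac_0 \in F' \otimes_\pi F' \cong F_\epsilon'$, which is bounded since $\Dirac_0 \in F'$ by Lemma~\ref{lem:FC0}.

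I expect the main technical hurdle to be the Fubini reduction in the second step: the convolution-style kernel $\int_0^{s \wedge t} M(r, r) \rd r$ produced by the It\^o isometry collapses \emph{exactly} onto the diagonal $M(r, r) v_1(r) v_2(r)$ precisely because the terminal condition $v(T) = 0$ is built into $F$. Without this boundary condition the reduction would leave residual terms, and the bilinear form testing $M$ would no longer match $\cB$; this is also what ultimately forces the use of the projective–injective pair $(E_\pi, F_\epsilon)$ rather than the Hilbertian pair $(E_2, F_2)$ in the continuity step above, cf.~Example~\ref{x:E2F2counter}.
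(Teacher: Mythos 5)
Your proposal is correct and follows essentially the same route as the paper's proof: reduce to elementary tensors $v_1\otimes v_2$, insert the pathwise identity of Lemma~\ref{l:cBX}, kill the cross terms via conditioning on $\cF_0$ and the martingale property~\eqref{e:mul-M}, apply the It\^o isometry, and extend by linearity and continuity using $M \in E_\pi$, Lemma~\ref{l:Delta}, and $\Dirac_0 \in F'$. The only (cosmetic) difference is in collapsing the triple integral onto the diagonal: you factor $\mathbf{1}_{r<s\wedge t}=\mathbf{1}_{r<s}\mathbf{1}_{r<t}$ and integrate $v_j'$ from $r$ to $T$, whereas the paper splits into the cases $s=s\wedge t$ and $t=s\wedge t$ and identifies the discrepancy with $\DIAG(M,v)$ as the integral of a total derivative — both hinge on the terminal condition $v(T)=0$ exactly as you observe.
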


\begin{proof}
	It suffices to verify the claim for $v$ of the form $v = v_1 \otimes v_2$
	with $v_1, v_2 \in F$.
	The more general statement
	follows by
	linearity and continuity of both sides
	in $v \in F_\epsilon$.
	We first observe with
	Fubini's theorem on $\Omega \times J$
	that
	$ 
	B(M, v_1 \otimes v_2)
	=
	B(\e{ X \otimes X }, v_1 \otimes v_2)
	=
	\e{ b(X, v_1) b(X, v_2) }
	.
	$ 
	Next, we insert the expression \eqref{e:l:cBX}
	for both $b(X, v_j)$
	and expand the product.
	The cross-terms vanish
	because
	the terms of the form
	$ 
	X_0 \int_0^t X(r) \rd W(r)
	$ 
	vanish in expectation;
	this is seen by
	conditioning this term on $\cF_0$
	and
	employing the martingale property \eqref{e:mul-M}.
	With the identity
	\eqref{e:itoiso}
	and
	$\e{ X(r)^2 } = M(r,r)$
	we arrive at
	\begin{equation*}
		B(M, v_1 \otimes v_2)
		=
		\e{ X_0^2 } v(0)
		+
		\rho^2
		\int_J \int_J
		v_1'(s) v_2'(t)
		\int_0^{s\wedge t} M(r,r) \rd r
		\rd s \rd t
		.
	\end{equation*}
	It remains to verify that $\rho^2 \DIAG(M, v)$
	coincides with the last term on the right-hand side.
	Let us distinguish
	the two cases
	$s = s \wedge t$ and $t = s \wedge t$
	and
	write that triple integral as
	\[
		\label{e:1short}
		\int_J v_1'(s) \int_s^T v_2'(t) \rd t \int_0^{s} M(r,r) \rd r \rd s
		+
		\int_J v_2'(t) \int_t^T v_1'(s) \rd s \int_0^{t} M(r,r) \rd r \rd t
		.
	\]
	Evaluating the $\rd t$ integral in the first summand
	and the $\rd s$ integral in the second summand,
	we see that
	$
	(\eqref{e:1short} - \DIAG(M, v))
	=
	\int_J \tfrac{d}{dt} \{ - v_1(t) v_2(t) \int_0^t M(r,r) \rd r \} \rd t
	=
	0
	$.
	Hence, $\eqref{e:1short} = \DIAG(M, v)$.
	This completes the proof.
\end{proof}

Using the equations for the first and second moments
we obtain an equation for the covariance function $\Cov(X) \in E_\pi$ from \eqref{e:CovX}:
\[
	\label{e:C-mul}
	\cB(\Cov(X), v) = \Cov(X_0) v(0) + \rho^2 \DIAG(m \otimes m, v)
	\quad
	\forall
	v \in F_\epsilon
	.
\]

Identity \eqref{e:eps2del} yields
$
\norm{ v \mapsto v(0) }{-\epsilon} =
\norm{\Dirac_0 \otimes \Dirac_0}{-\epsilon} =
\tfrac12 (1 - e^{-2 \lambda T})
$
for the functional appearing
on the right-hand side of \eqref{e:U-mul} and \eqref{e:C-mul}.
Similarly,
$
\norm{\DIAG (m \otimes m)}{-\epsilon}
=
\tfrac{1}{2}
\int_J (1 - e^{-2\lambda(T-t)}) |m(t)|^2 \rd t
\leq
\tfrac{1}{2\lambda} \norm{m}{E}^2
$,
in agreement with
Lemma \ref{l:Delta}.

We emphasize that
it is not possible to replace
in the present case of multiplicative noise
the pair of trial and test spaces
$E_\pi \times F_\epsilon$
by either pair
$E_2 \times F_2$ or
$E_\epsilon \times F_\pi$,
because
by Example~\ref{x:E2F2counter}
the operator $\DIAG$ is not continuous there.
We note, however,
that
in the case of additive noise (\S\ref{s:sode:add})
the pair $E_\pi \times F_\epsilon$
could be used
instead of $E_2 \times F_2$.
Then
$
\norm{\diag}{-\epsilon} =
\lambda \diag(B^{-1} \diag) =
\tfrac{1}{4 \lambda}
( e^{-2 T \lambda} - 1 + 2 T \lambda )
$
with the asymptotics
$\tfrac12 T^2 \lambda$ (small $\lambda$)
and $\tfrac12 T$ (large $\lambda$).

In order to discuss the well-posedness of
the variational problem \eqref{e:U-mul},
given
a functional $\ell \in F_\epsilon'$,
we consider
the more general problem:
\[
	\label{e:U-mul-L}
	\text{Find}
	\quad
	U \in E_\pi
	\quad\text{s.t.}\quad
	\cB(U, v) = \ell(v)
	\quad
	\forall v \in F_\epsilon
	.
\]

Owing to
$
\norm{B w}{-\epsilon}
=
\norm{w}{\pi}
$
and
$\norm{\DIAG}{} \leq 1 / (2\lambda)$
we have
$\norm{ \cB w }{-\epsilon} \geq (1 - \rho^2 / (2\lambda)) \norm{w}{\pi}$.
Thus,
injectivity of $\cB$ holds under
the condition
$
\rho^2 < 2 \lambda
$
of small ``volatility''.
A similar condition was imposed in \cite[Thm.~5.5]{KirchnerLangLarsson2016}.
This is exactly
the threshold for
the second moment
\eqref{e:X-table-EXsXt}
to diverge as $s = t \to \infty$,
but it stays nevertheless finite for all finite $s = t$.
We discuss here what happens in
the variational formulation \eqref{e:U-mul-L}
for larger volatilities $\rho$,
and
summarize in Theorem \ref{t:stab-mul} below.

Since $B$ is an isomorphism, problem \eqref{e:U-mul-L}
is equivalent to
$U = \rho^2 B^{-1} \DIAG U + B^{-1} \ell$.
Using the representation of $\DIAG(U, v)$ as the double integral
of $\Dirac(s - s') U(s, s') v(s, s')$,
and the integral representation of $B^{-1}$ through \eqref{e:ib},
we obtain the integral equation
\[
	\label{e:uii}
	U(t, t') =
	\rho^2
	\int_0^{t \wedge t'}
	e^{-\lambda(t + t' - 2 s)}
	U(s, s)
	\rd s
	+
	(B^{-1} \ell)(t, t')
	.
\]
Defining
$f(t) := (B^{-1} \DIAG U)(t,t) = \int_0^t e^{-2 \lambda (t - s)} U(s, s) \rd s$
and
$g(t) := (B^{-1} \ell)(t, t)$
we find from \eqref{e:uii}
the ODE
$f'(t) + 2 \lambda f(t) = \rho^2 f(t) + g(t)$
with
the initial condition $f(0) = 0$.
The solution is
\[
	\label{e:Ft}
	f(t)
	=
	(B^{-1} \DIAG U)(t,t)
	=
	\int_0^t e^{-(2 \lambda - \rho^2)(t - r)} g(r) \rd r
	.
\]
Inserting
\[
	\label{e:ugrel}
	U(s,s)
	= \rho^2 f(s) + g(s)
	= \rho^2 \int_0^s e^{-(2\lambda-\rho^2)(s-r)} g(r) \rd r + g(s)
\]
under the integral of \eqref{e:uii}
provides a unique candidate for $U$.
Moreover, $U \in E_2$.
We now estimate $\norm{U}{\pi}$ in terms of the norm of $\ell$.

Clearly,
not all functionals $\ell$
lead to solutions
that are potential second moments.
Let us therefore assume first
that $\ell$
is SPSD.
Then $B^{-1} \ell$ is SPSD
by Lemma \ref{l:psd}.
In particular, $f \geq 0$ and $g \geq 0$.
Thus the functional
$
v \mapsto \DIAG (U, v)
=
\int_J (\rho^2 f(t) + g(t)) \, v(t,t) \rd t
$
is SPSD.
Now
$U = \rho^2 B^{-1} \DIAG U  + B^{-1} \ell$
is the sum of two SPSD functions (Lemma \ref{l:psd})
and
is therefore SPSD.
Under these assumptions,
Lemma~\ref{l:2andpinorm} gives
\[
	\label{e:upiell}
	\norm{U}{\pi}
	=
	\lambda \diag(U)
	=
	\rho^2 \lambda \diag(B^{-1} \DIAG U)
	+
	\lambda \diag(B^{-1} \ell)
	.
\]
For the first term
on the right-hand side of \eqref{e:upiell}
we employ \eqref{e:Ft} as follows:
\[
	\label{e:dBDu}
	\diag(B^{-1} \DIAG U)
	=
	\int_J
	g(r)
	\int_r^T
	e^{-(2 \lambda - \rho^2)(s - r)}
	\rd s
	\rd r
	\leq
	\diag(B^{-1} \ell)
	\tfrac{
	e^{(\rho^2 - 2\lambda) T } - 1
	}{
	\rho^2 - 2\lambda
	}
	,
\]
where we have
exchanged the order of integration
in the first step,
evaluated the inner integral
and
used $g \geq 0$
with
$\norm{g}{L_1(J)} = \diag(B^{-1} \ell)$
in the last step.
The fraction evaluates to $T$
in the limit $\rho^2 = 2\lambda$.
Combining \eqref{e:upiell}--\eqref{e:dBDu}
and \eqref{e:eps2del},
we arrive at the following theorem.

\begin{theorem}
	\label{t:stab-mul}
	Suppose that $\ell \in F_\epsilon'$ is
	SPSD.
	Then,
	for any $\rho \geq 0$ and $\lambda > 0$,
	the variational problem \eqref{e:U-mul-L}
	has a unique solution $U \in E_\pi$.
	This solution is SPSD
	and admits the bound
	\[
		\label{e:stab-mul}
		\norm{U}{\pi}
		\leq
		C
		\norm{\ell}{-\epsilon}
		\TEXT{with}
		C :=
		\tfrac{
		\rho^2 e^{(\rho^2 - 2\lambda) T } - 2 \lambda
		}{
		\rho^2 - 2 \lambda
		}
		,
	\]
	where $C = \rho^2 T + 1$ for $\rho^2 = 2\lambda$.
\end{theorem}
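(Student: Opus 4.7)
The plan is to assemble the pieces already developed in the paragraphs preceding the theorem, paying attention to the fact that we need $U \in E_\pi$ (not just $E_2$). I would organize the argument in three steps.

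\textbf{Step 1: existence and uniqueness of a candidate $U \in E_2$.} Using the isometry \eqref{e:bob-epi}, I would rewrite \eqref{e:U-mul-L} as the fixed-point identity $U = \rho^2 B^{-1}\DIAG U + B^{-1}\ell$ and express it pointwise via \eqref{e:uii}. The problem reduces to determining $U(s,s)$, which by \eqref{e:ugrel} is encoded by the scalar function $f(s) := (B^{-1}\DIAG U)(s,s)$. Differentiating the integral formula for $f$ yields the linear ODE $f' + (2\lambda - \rho^2) f = g$ with $f(0)=0$, where $g(t) := (B^{-1}\ell)(t,t)$. This ODE has the unique solution \eqref{e:Ft} for any $\rho,\lambda$, and substitution back into \eqref{e:uii} produces a unique candidate $U \in E_2$, independently of any smallness condition on $\rho$.

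\textbf{Step 2: $U$ is SPSD and lies in $E_\pi$.} Since $\ell \in F_\epsilon'$ is SPSD, Lemma~\ref{l:psd} gives that $B^{-1}\ell$ is SPSD, hence admits a Mercer-type decomposition $B^{-1}\ell = \sum_n s_n\, e_n \otimes e_n$ with $s_n \geq 0$; in particular the diagonal $g$ is nonnegative and $\int_J g = \diag(B^{-1}\ell)$. The formula \eqref{e:Ft} then forces $f \geq 0$, so by \eqref{e:ugrel} the diagonal of $U$ is nonnegative. Consequently the functional $v \mapsto \DIAG(U,v)$ is SPSD on $F\otimes F$, and Lemma~\ref{l:psd} applied once more yields that $B^{-1}\DIAG U$ is SPSD. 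As a nonnegative linear combination of SPSD elements, $U$ is SPSD. Summability of the diagonal of $U$ combined with Lemma~\ref{l:2andpinorm} then confirms $U \in E_\pi$.

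\textbf{Step 3: the norm bound.} Once $U \in E_\pi$ is SPSD, Lemma~\ref{l:2andpinorm} provides
\begin{equation*}
\norm{U}{\pi} = \lambda\diag(U) = \rho^2 \lambda\, \diag(B^{-1}\DIAG U) + \lambda\,\diag(B^{-1}\ell),
\end{equation*}
where the second equality uses the fixed-point identity and linearity of $\diag$. The second summand equals $\norm{\ell}{-\epsilon}$ by \eqref{e:eps2del}. For the first summand I would invoke \eqref{e:dBDu}, which bounds $\diag(B^{-1}\DIAG U)$ by $\diag(B^{-1}\ell) \cdot (e^{(\rho^2-2\lambda)T}-1)/(\rho^2-2\lambda)$ (and by $T \diag(B^{-1}\ell)$ in the limit $\rho^2 = 2\lambda$). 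Combining the two contributions and using \eqref{e:eps2del} once more gives
\begin{equation*}
\norm{U}{\pi} \leq \left(1 + \rho^2 \tfrac{e^{(\rho^2-2\lambda)T}-1}{\rho^2-2\lambda}\right) \norm{\ell}{-\epsilon} = \tfrac{\rho^2 e^{(\rho^2-2\lambda)T} - 2\lambda}{\rho^2-2\lambda}\,\norm{\ell}{-\epsilon},
\end{equation*}
with the stated degenerate value $C = \rho^2 T + 1$ when $\rho^2 = 2\lambda$.

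The main obstacle is really the $E_\pi$-membership in Step~2: \emph{a priori}, the Neumann-type argument which would work for small $\rho$ breaks down when $\rho^2 \geq 2\lambda$, and without SPSD-ness Lemma~\ref{l:2andpinorm} is unavailable. The SPSD hypothesis on $\ell$ is used precisely to propagate positivity from $B^{-1}\ell$ through $f$ and $U$, converting the projective norm into the scalar, computable quantity $\lambda\diag(U)$, where the explicit integration in \eqref{e:dBDu} then delivers a bound valid for \emph{all} $\rho\geq 0$.
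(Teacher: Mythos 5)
Your proposal is correct and follows essentially the same route as the paper: the fixed-point identity $U = \rho^2 B^{-1}\DIAG U + B^{-1}\ell$, the reduction to the scalar ODE for the diagonal function $f$ with solution \eqref{e:Ft}, propagation of SPSD-ness from $\ell$ through $g$, $f$, and $U$ via Lemma~\ref{l:psd}, and the conversion of $\norm{U}{\pi}$ into $\lambda\diag(U)$ by Lemma~\ref{l:2andpinorm} followed by the integral estimate \eqref{e:dBDu} and the identity \eqref{e:eps2del}. The final arithmetic combining the two contributions into the stated constant $C$ checks out, including the degenerate case $\rho^2 = 2\lambda$.
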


The bound in \eqref{e:stab-mul}
is sharp:
for $\eta > 0$ and
$\ell := \eta^{-1} B (\chi_{(0,\eta)} \otimes \chi_{(0,\eta)})$
we have $g = \eta^{-1} \chi_{(0,\eta)}$ in~\eqref{e:dBDu},
and
the inequality
in \eqref{e:dBDu}
approaches an equality
as $\eta \searrow 0$.

For a general functional $\ell \in F_\epsilon'$,
we decompose $\ell = \ell^+ - \ell^- + \ell^a$
as in \eqref{e:lpm}.
The corresponding solutions
$U^\pm := \cB^{-1} \ell^\pm$ and $U^a := \cB^{-1} \ell^a = B^{-1} \ell^a$
(noting that $\DIAG U^a = 0$ by antisymmetry)
satisfy
the bounds
$\norm{U^\pm}{\pi} \leq C \norm{\ell^\pm}{-\epsilon}$
and
$\norm{U^a}{\pi} = \norm{\ell^a}{-\epsilon}$.
By linearity, $U := U^+ - U^- + U^a$
is the solution to \eqref{e:U-mul-L},
and the estimate
$ 
\norm{U}{\pi}
\leq
C(\norm{\ell^+}{-\epsilon} + \norm{\ell^-}{-\epsilon}) + \norm{\ell^a}{-\epsilon}
\leq
(C+1) \norm{\ell}{-\epsilon}
$ 
follows
by triangle inequality in the first step
and
by \eqref{e:lpm} in the last step.

In contrast to Lemma~\ref{l:psd},
the solution $U$ to~\eqref{e:U-mul-L}
may be SPSD
even though the right-hand side $\ell$ is not.
Indeed,
for
any $(w, v) \in E \times F$
with $\DIAG(w \otimes w, v \otimes v) = \int_J |w(t) v(t)|^2 \rd t \neq 0$,
the expression
$\cB(w \otimes w, v \otimes v) = |b(w, v)|^2 - \rho^2 \DIAG(w \otimes w, v \otimes v)$
is negative for sufficiently large $\rho$.

The variational formulations~\eqref{e:U-mul},~\eqref{e:C-mul}
for the second moment and the covariance function
are of the form \eqref{e:U-mul-L}
for the functionals
$
\ell :=
\e{X_0^2} (\Dirac_0 \otimes \Dirac_0)
$
and
$
\ell :=
\Cov(X_0) (\Dirac_0 \otimes \Dirac_0)
+
\rho^2 \DIAG(m \otimes m)
$.

The proof of the above theorem
highlights the special status
of the diagonal $t \mapsto U(t,t)$.
First, it
is uniquely defined
as the solution of
an integral equation.
Second, it determines
all other
off-diagonal values of $U$.
Finally,
the projective norm \eqref{e:upiell} only ``looks'' at the diagonal
(when
$U$ is SPSD).
These insights will guide
\textbf{a)} the development of
the numerical methods in \S\ref{s:dis} and
\textbf{b)} the proof of
well-posedness of the deterministic second moment
equation also for the vector-valued case in \S\ref{s:pde}.

\section{Conforming discretizations of the deterministic equations}
\label{s:dis}

\subsection{Orientation}
\label{s:dis:0}

In \S\ref{s:sode} we have derived
deterministic variational formulations
for
the first and second moments of
the stochastic processes \eqref{e:X-add} and \eqref{e:X-mul}.
In particular,
the first moment satisfies
a known ``weak'' variational formulation
of an ODE.
To our knowledge,
\cite{BabuskaJanik1989,BabuskaJanik1990}
were the first
to discuss
the numerical analysis
of
conforming finite element discretizations
of
a space-time variational formulation
for
linear parabolic PDEs.
The problem was first reduced
to the underlying family of ODEs
parameterized by the spectral parameter $\lambda$.
With the notation
from \S\ref{s:sode:det}
for
the bilinear form $b$
and
the spaces $E$ and $F$,
the solution $u$ to such an ODE
is characterized by
a well-posed variational problem
of the above form \eqref{e:meaneqn},
with a general right-hand side $\ell$.
The temporal discretization analyzed in~\cite{BabuskaJanik1990}
was of the conforming type,
employing discontinuous piecewise polynomials
as the discrete trial space for $u$
and
continuous piecewise polynomials of one degree higher
as the discrete test space for $v$.
The analysis in essence revealed
that the discretization
is \emph{not} uniformly stable
(in the Petrov--Galerkin sense, as discussed below)
in the choice of
the discretization parameters
such as the polynomial degree and
the location of the temporal nodes
\cite[Thm.~2.2.1]{BabuskaJanik1990}.

The same question of stability of
was taken up in \cite{AndreevSchweitzer2014}
for
a ``strong'' space-time variational formulation
of linear parabolic PDEs
and
for the two classes of discretizations,
of Gauss--Legendre (e.g.,~Crank--Nicolson, CN) or Gauss--Radau (e.g.,~implicit Euler, iE) type.
It was confirmed that both types
are in general only
\emph{conditionally}
space-time stable,
but
the Gauss--Radau type
can be made \emph{unconditionally} stable
under mild restrictions on the temporal mesh.
We will first revisit
the simplest representative of each group
adapted to
the present variational formulation.
The adaptation consists in
switching the roles of
the discrete trial and test spaces
and by reversing the temporal direction,
the latter
due to the integration by parts
that was used in the derivation of
the variational formulation \eqref{e:meaneqn}.
The resulting adjoint discretizations
will therefore be denoted by
$\textrm{CN}^{\star}$ and $\textrm{iE}^{\star}$,
respectively.
The $\textrm{CN}^{\star}$ discretization
is thus a special case of
the discretizations
analyzed in~\cite{BabuskaJanik1990}.

In summary,
in \S\ref{s:dis:1}
we will discuss
two conforming discretizations
for
the deterministic first moment equation \eqref{e:meaneqn}:
$\textrm{CN}^{\star}$ which is only conditionally stable
(depending on the spectral parameter $\lambda$)
and
$\textrm{iE}^{\star}$ which is stable under a mild condition
on the temporal mesh
(comparable size of neighboring temporal elements).
Both employ discontinuous trial spaces
but $\textrm{iE}^{\star}$ requires
additional discussion due to the somewhat unusual shape functions,
whereby the discrete trial spaces are not nested and therefore do not generate a dense subspace in the usual sense.
The situation
transfers with no surprises
to
the second moment equations with additive noise~\eqref{e:U-add}
by tensorizing the discrete trial/test spaces.
The case of multiplicative noise~\eqref{e:U-mul}, however,
presents a significant twist
due to:
\begin{enumerate}
	\item
		the presence of the $\DIAG$ term in
		the definition~\eqref{e:cB} of the bilinear form $\cB$.
		We will see that
		$\textrm{CN}^{\star}$ interacts naturally with the $\DIAG$ operator
		while
		$\textrm{iE}^{\star}$ requires a modification to
		restore the expected convergence order.
	\item
		the non-Hilbertian nature of the trial and test spaces
		in \eqref{e:U-mul}.
\end{enumerate}

We will then provide a common framework
for both discretizations,
generalizing to arbitrary polynomial degrees.
This will allow us to use
the unconditionally stable Gauss--Radau discretization family
without resorting to the modification
of the lowest-order $\textrm{iE}^{\star}$ discretization
because
the discrete trial spaces with higher polynomial degree
do generate a dense subspace.

Since the
trial and test spaces in \eqref{e:U-mul} are not Hilbert spaces,
we briefly state results on
Petrov--Galerkin discretizations
of variational problems
on normed spaces
in \S\ref{s:dis:PG}.
In \S\ref{s:dis:1:o}
we construct discretizations
on tensor product spaces
and
comment on their stability.
These are applied
to
the variational problem~\eqref{e:U-add}
for the second moment in the additive case
in \S\ref{s:dis:2-add}.

In the multiplicative case
we obtained
existence and stability
of the exact solution
for arbitrary $\rho \geq 0$
in Theorem \ref{t:stab-mul},
even
beyond the trivial range $0 \leq \rho^2 < 2 \lambda$.
The situation is similar in the discrete setting,
where this trivial range is reduced
by the discrete inf-sup constant $\gamma_k$
to
$0 \leq \rho^2 < 2 \lambda \gamma_k^2$.
In \S\ref{s:dis:2-mul}
we will therefore investigate,
for the low order $\textrm{CN}^{\star}$ and $\textrm{iE}^{\star}$ schemes
and some of their variants,
whether stability holds
for all $\rho \geq 0$.
The behavior of the high order discretizations
beyond the trivial stability range
remains an open question.

%

\subsection{First moment discretization}
\label{s:dis:1}

We are using the notation from \S\ref{s:sode:det}.
Let us consider
the general formulation of \eqref{e:meaneqn}
as
the variational problem
\[
	\label{e:uode-vf}
	\text{Find}
	\quad
	u \in E
	\quad\text{s.t.}\quad
	b(u,v) = \ell(v)
	\quad
	\forall v \in F
\]
with some bounded linear functional $\ell \in F'$.
Recall that the spaces $E$ and $F$ carry
the $\lambda$-dependent norms \eqref{e:EF-norms}
that
render $b : E \to F'$ an isometric isomorphism.
This variational problem is formally obtained
by testing the real-valued ODE
\[
	\label{e:uode}
	u'(t) + \lambda u(t) = f
	\quad\text{on}\quad
	J = (0, T),
	\quad
	u(0) = g,
\]
with a test function $v$,
integrating over $J$,
moving
the derivative from $u'$ to $v$
via integration by parts
and
then replacing
the exposed $u(0)$ by the given initial datum $g$.
The corresponding
right-hand side then reads as
$\ell(v) := \int_J \duality{f, v}{} \rd t + \duality{ g, v(0) }{}$.
We write $\duality{\CDOT, \CDOT}{}$
for the simple multiplication
to emphasize
the structure of the problem
and
to facilitate the transition to vector-valued ODEs.

For the discretization
of the variational problem~\eqref{e:uode-vf}
we need to define subspaces
\begin{equation*}
	%
	E^k \subset E
	\TEXT{and}
	F^k \subset F
\end{equation*}
of the same (nontrivial) finite dimension.
We then consider the discrete variational problem
\[
	\label{e:uk}
	\text{Find}
	\quad
	u^k \in E^k
	\TEXT{s.t.}
	b(u^k, v)
	=
	\ell(v)
	\quad
	\forall v \in F^k
	.
\]
The well-posedness of
this discrete problem
is quantified by the discrete inf-sup constant
\[
	\label{e:ga-EF}
	\gamma_k
	:=
	\inf_{w \in S(E^k)}
	\sup_{v \in S(F^k)}
	b(w, v)
	>
	0,
\]
since
the norm of
the discrete data-to-solution mapping $\ell|_{F^k} \mapsto u_k$
equals $1 / \gamma_k$.
Moreover,
the quasi-optimality estimate
\[
	\label{e:u-qo}
	\norm{ u - u^k }{E}
	\leq
	(\norm{b}{} / \gamma_k)
	\inf_{w \in E^k} \norm{ u - w }{E}
\]
holds \cite[Thm.~2]{XuZikatanov2003},
where in fact $\norm{b}{} = 1$ by \eqref{e:b-iso}.
We call a family
$\{ E^k \times F^k \}_{k > 0}$,
of discretization pairs
uniformly stable
if $\inf_{k > 0} \gamma_k > 0$.
To construct $E^k \times F^k$
we introduce a temporal mesh
\[
	\label{e:cT}
	\cT
	:=
	\{
	0 =: t_0
	< t_1
	< \ldots
	< t_N := T
	\}
\]
subdividing $J = (0, T)$
into $N$ temporal elements.
Below,
the dependence on $\cT$
is implicit in the notation.
We write
\begin{equation*}
	J_n := (t_{n-1}, t_n)
	\TEXT{and}
	k_n := |t_n - t_{n-1}|,
	\quad
	n = 1, \ldots, N.
\end{equation*}

As announced above, we first discuss
the simplest representatives of the
Gauss--Legendre and Gauss--Radau
discretizations in \S\ref{s:dis:1:CN*}--\S\ref{s:dis:1:iE*},
which are the $\text{CN}^\star$ and
the $\text{iE}^\star$ schemes.
For both methods, the discrete test space
$F^k \subset F$
is defined as
the spline space of
continuous piecewise affine functions $v$
with respect to the temporal mesh $\cT$
such that $v(T) = 0$.
A common framework is the subject of \S\ref{s:dis:1:both}.

\subsubsection{The $\text{CN}^\star$ discretization}
\label{s:dis:1:CN*}

For the discrete trial space~$E^k \subset E$,
the space of piecewise constant functions
with respect to~$\cT$
seems a natural choice.
We call this discretization
$\textrm{CN}^\star$
in reference to
the reversal of the roles of the trial and test spaces
compared to
the usual Crank--Nicolson time-stepping scheme.
Unfortunately,
if we keep the temporal mesh~$\cT$ fixed,
the discrete inf-sup constant~\eqref{e:ga-EF}
of the couple~$E^k \times F^k$
depends on the spectral parameter~$\lambda$,
see Figure~\ref{f:CN-iE}.
This was already observed in
\cite[Eqn.~(2.3.10)]{BabuskaJanik1990}.
It can be shown
along the lines of~\cite{AndreevSchweitzer2014}
that
$\gamma_k \gtrsim (1 + \min\{ \sqrt{\lambda T}, \textrm{CFL} \} )^{-1}$,
where $\textrm{CFL} := \max_n k_n \lambda$ is
the parabolic CFL number.
The three-phase behavior of
the $\text{CN}^{\star}$ scheme in
Figure~\ref{f:CN-iE}
can be intuitively understood as follows:
Consider
$b(w, v) = \int_J (-v' + \lambda v) w$
from \eqref{e:ga-EF}.
For any $w \in E^k$ we can find a $v \in F^k$ such that $-v' = w$,
so that
at sufficiently low spectral numbers $\lambda$,
the estimate $\gamma_k \geq 1 - \epsilon$ is evident.
For large $\lambda$, the function $-v' + \lambda v$
is, up to negligible jumps, a piecewise linear continuous one.
Such functions approximate a general piecewise constant $w$
poorly, see
\cite[Eqn.~(2.3.10)]{BabuskaJanik1990}.

This behavior renders the method
less useful for parabolic PDEs
because following a spatial semi-discretization,
a low parabolic CFL number has to be maintained
for uniform stability.
\begin{figure}[tpb]
	\centering
	\includegraphics[width=0.5\textwidth]{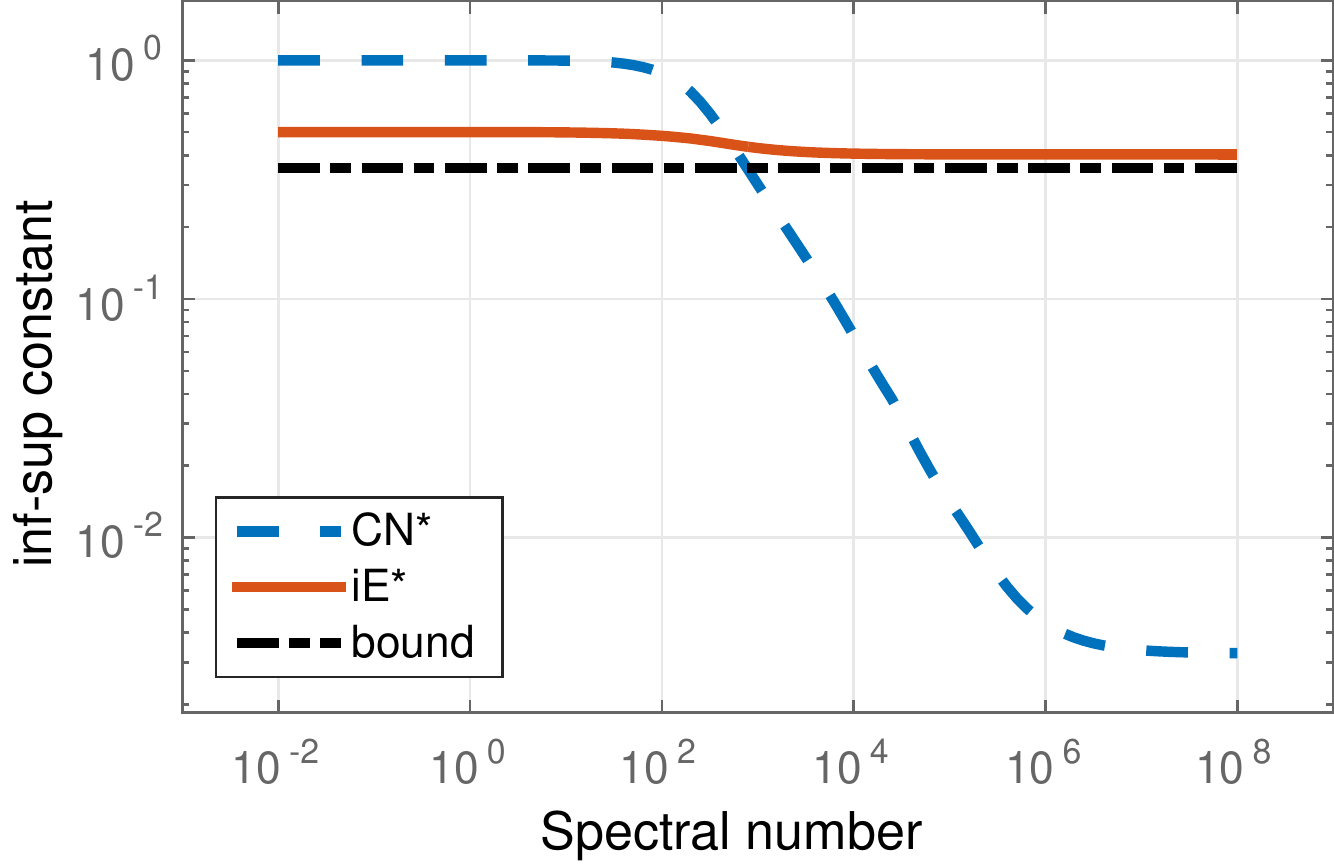}
	\caption{%
		The inf-sup constant \eqref{e:ga-EF} for
		the $\textrm{CN}^*$
		and
		the $\textrm{iE}^*$
		discretizations
		on the same ``random'' temporal mesh of the interval $(0, 1)$
		with 210 nodes
		and
		backward successive temporal element ratio
		$\sigma \leq 3$ in \eqref{e:sigma}.
		The bound shown is the estimate from \eqref{e:p:iE*}.
	}
	\label{f:CN-iE}
\end{figure}
%
%

\subsubsection{The $\text{iE}^\star$ discretization}
\label{s:dis:1:iE*}

To obtain stability under only mild restrictions
we
recur to an observation
of \cite[\S3.4]{AndreevSchweitzer2014};
for the sake of a self-contained exposition
and
sharp results
we confine
the discussion first
to the lowest order case.
We take $E^k$
as the space of functions $w \in L_2(J)$
for which each
$w|_{J_n}$
is a dilated translate
of
the shape function $\phi : s \mapsto (4 - 6 s)$
from the reference temporal element $(0, 1)$ to
the temporal element $J_n = (t_{n-1}, t_n)$.
We refer to this combination of $E^k \times F^k$ as
$\textrm{iE}^\star$ (adjoint implicit Euler).
The motivation for this definition is as follows.
Consider
the adjoint (backward) ODE
\[
	\label{e:adj-ODE}
	-v' + \lambda v = f,
	\quad
	v(T) = 0,
\]
with a given $f$ that
for the sake of argument
is
piecewise affine with respect to $\cT$.
Define the approximate continuous piecewise affine solution $v \in F^k$
(hence, $v(T) = 0$)
through the implicit Euler time-stepping scheme
\emph{backward in time}:
\[
	\label{e:iE*}
	-
	\tfrac{1}{k_n}
	( v(t_{n}) - v(t_{n-1}) )
	+
	\lambda
	v(t_{n-1})
	=
	f(t_{n-1}^+)
	,
	\quad
	n = N,\ldots,1
	,
\]
where
$t_{n-1}^+$ denotes the limit from above.
We shall use the obvious abbreviations $v_n$ and $f_{n-1}^+$
when referring to \eqref{e:iE*}.
The definition of the discrete trial space $E^k$
implies
that the time-step condition~\eqref{e:iE*}
is equivalent to
the variational requirement
\[
	\label{e:iE*-vf}
	\int_{J_n}
	\duality{
	w,
	-v' + \lambda v - f
	}{}
	\rd t
	=
	0
	\quad
	\forall w \in E^k
	\quad
	\forall n = N,\ldots,1
	.
\]
The equivalence
is due to
the identity
$
\int_0^1 \phi(s) (a s + b) \rd s
=
b
$
for all real $a$ and $b$,
which implies that the integral in
\eqref{e:iE*-vf}
is a multiple of
$(-v' + \lambda v - f)(t_{n-1}^+)$.

The role of the adjoint ODE \eqref{e:adj-ODE}
is elucidated in the proof of the following proposition
that bounds
the inf-sup constant \eqref{e:ga-EF}
for the $\textrm{iE}^\star$ discretization.
The result is formulated in terms of
the backward successive temporal element ratio
\[
	\label{e:sigma}
	\sigma
	:=
	\max_{n = 1, \ldots, N-1} k_n / k_{n+1}
	.
\]

\begin{proposition}
	\label{p:iE*}
	The inf-sup condition \eqref{e:ga-EF}
	holds
	for the $\textrm{iE}^\star$ discretization
    with
	\[
		\label{e:p:iE*}
		\gamma_k \geq
		\gamma_\sigma
		:=
		1 / \sqrt{2(1 + \max\{1, \sigma\})}
		,
	\]
	uniformly in $\lambda > 0$.
\end{proposition}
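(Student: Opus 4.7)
The plan is to establish the inf-sup bound by an explicit adjoint-based test-function construction: for every trial $w \in E^k$ with $\norm{w}{E} = 1$, I will exhibit a $v \in F^k$ satisfying $b(w,v)/\norm{v}{F} \geq \gamma_\sigma$. Expanding $w = \sum_{n=1}^N c_n \phi_n$ (so that $\norm{w}{E}^2 = 4\lambda \sum_n c_n^2 k_n = 1$, using $\int_0^1 \phi^2 \rd s = 4$), I take $v$ to be the backward implicit Euler approximation \eqref{e:iE*} to the solution of the adjoint ODE \eqref{e:adj-ODE} with right-hand side $f := \lambda w$. Since $w$ is piecewise affine with respect to $\cT$, the values $f(t_{n-1}^+) = 4\lambda c_n$ are unambiguously defined, and the recursion produces $v_N := 0$ and
\begin{equation*}
v_{n-1} = \frac{v_n + 4\lambda k_n c_n}{1 + \lambda k_n}, \qquad n = N, N-1, \ldots, 1.
\end{equation*}
This $v$ is the conforming discrete analogue of the exact Riesz representer $v^* \in F$ of $b(w,\CDOT)$, which solves $-(v^*)' + \lambda v^* = \lambda w$ with $v^*(T) = 0$ and satisfies $\norm{v^*}{F} = \norm{w}{E}$ and $b(w, v^*) = \norm{w}{E}^2$.

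The identity $b(w, v) = \norm{w}{E}^2$ follows from the variational characterization \eqref{e:iE*-vf} of the iE step: taking $\tilde w = w$ in $\int_{J_n} \tilde w (-v'+\lambda v - \lambda w)\rd t = 0$ and summing over $n$ yields $b(w,v) = \lambda \norm{w}{L_2(J)}^2 = \norm{w}{E}^2$. Thus $b(w,v)/\norm{v}{F} = 1/\norm{v}{F}$, and the proposition reduces to the stability estimate $\norm{v}{F}^2 \leq 2(1+\max\{1,\sigma\})$.

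The stability estimate is the technical core. By \eqref{e:Fnorm-eq} and the explicit piecewise-affine form of $-v'+\lambda v$ on each $J_n$ (value $4\lambda c_n$ at the left endpoint, slope $\lambda(v_n - v_{n-1})/k_n$), a direct computation gives
\begin{equation*}
\norm{v}{F}^2 = 4\norm{w}{E}^2 + 4\lambda \sum_{n=1}^{N} c_n \, d_n \, k_n + \tfrac{\lambda}{3}\sum_{n=1}^{N} d_n^2 \, k_n, \qquad d_n := v_n - v_{n-1}.
\end{equation*}
The iE recursion supplies the closed form $d_n = \theta_n \lambda k_n(v_n - 4 c_n)$ with $\theta_n := 1/(1+\lambda k_n) \in (0,1)$, which couples $d_n$ to the future coefficients $c_m$ with $m > n$. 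Substituting this into the two $\lambda$-dependent sums re-expresses them as a discrete quadratic form in $(c_1,\ldots,c_N)$, whose off-diagonal contributions are bounded against the diagonal $\sum_n k_n c_n^2$ by a weighted Young inequality with weights tuned to the successive element ratios $k_n/k_{n+1} \leq \sigma$. A cancellation between the nonnegative slope-squared term and the cross term leaves a remainder whose largest weight is $\max\{1, \sigma\}$, producing the claimed bound.

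The main obstacle is precisely this backward-in-time cascade of coupling: the dependence of $v_n$ on all $c_m$ with $m > n$ contaminates the $n$th cross and slope terms, and only after a careful telescoping reorganization does the sharp constant $2(1+\max\{1,\sigma\})$ emerge uniformly in $\lambda > 0$. Once the quadratic-form inequality is established, the inf-sup bound $\gamma_k \geq 1/\norm{v}{F} \geq \gamma_\sigma$ follows at once.
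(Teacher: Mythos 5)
Your construction of the test function and the identity $b(w,v)=\norm{w}{E}^2$ via \eqref{e:iE*-vf} coincide exactly with the paper's argument, and your expansion $\norm{v}{F}^2 = 4\norm{w}{E}^2 + 4\lambda\sum_n c_n d_n k_n + \tfrac{\lambda}{3}\sum_n d_n^2 k_n$ is a correct computation from \eqref{e:Fnorm-eq}. The problem is that the entire content of the proposition — the appearance of $\sigma$ and the constant $2(1+\max\{1,\sigma\})$ — lives in the step you only describe: "a weighted Young inequality with weights tuned to the successive element ratios" and "a cancellation \ldots leaves a remainder whose largest weight is $\max\{1,\sigma\}$". No inequality is actually exhibited. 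Worse, the route you choose makes this step harder than it needs to be: after substituting $d_n = \theta_n\lambda k_n(v_n-4c_n)$ and unwinding the recursion, each $v_n$ depends on \emph{all} of $c_{n+1},\ldots,c_N$, so the resulting quadratic form in $(c_1,\ldots,c_N)$ is dense and $\lambda$-dependent, and it is not at all clear that your "telescoping reorganization" closes uniformly in $\lambda$. Note also that for $\sigma\leq 1$ your target forces $4\lambda\sum_n c_n d_n k_n + \tfrac{\lambda}{3}\sum_n d_n^2 k_n \leq 0$, a global sign statement about the sum of a cross term and a nonnegative term that you would have to prove, not merely assert.

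The paper sidesteps the backward cascade entirely by \emph{squaring the one-step recursion} \eqref{e:iE*} (with $f=\lambda w$) to obtain the per-element identity \eqref{e:p:iE*:n}, whose last two terms $|v_{n-1}|^2-|v_n|^2$ telescope on summation. This yields the exact identity $\normiii{\tfrac12 v}{F}^2 = \norm{w}{E}^2$ for the mesh-dependent norm \eqref{e:iii-F1}, with no need to solve the recursion. The constant $\sigma$ then enters only through the elementary elementwise comparison \eqref{e:p:iE*:In} of $\norm{v}{L_2(J_n)}$ with the piecewise-constant interpolant on $J_n$ and $J_{n+1}$, giving $\norm{v}{F}^2 \leq 2(1+\max\{1,\sigma\})\normiii{\tfrac12 v}{F}^2$. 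If you want to keep your direct expansion, you would still need to reproduce something equivalent to these two steps; as written, the stability estimate — the core of the proof — is missing.
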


Thus,
in order
to obtain uniform stability of the $\textrm{iE}^\star$ discretization
it suffices to ensure that
the backward successive temporal element
ratio~\eqref{e:sigma}
stays bounded.
This is verified numerically in Figure \ref{f:CN-iE}.
We generated an initial temporal mesh for $T = 1$
with $129$ nodes
by distributing
the inner nodes in interval $(0, 1)$ uniformly at random.
New nodes were inserted
by subdividing large temporal elements into two equal ones
until $\sigma \leq 3$,
leading to a temporal mesh with 210 nodes.
On this new temporal mesh,
we observe that
the inf-sup constant of
the $\textrm{iE}^{\star}$ discretization
is controlled as in~\eqref{e:p:iE*},
while that of $\textrm{CN}^{\star}$
depends strongly
on the spectral parameter $\lambda$, as already
explained in \S\ref{s:dis:1:CN*}.

\begin{proof}[Proof of Proposition \ref{p:iE*}]
	Let $w \in E^k$ be arbitrary nonzero.
	We will find a discrete $v \in F^k$
	such that
	$ 
	b(w, v)
	\geq
	\gamma_\sigma
	\norm{ w }{E}
	\norm{ v }{F}
	.
	$ 
	To this end, consider the adjoint ODE \eqref{e:adj-ODE}
	with $f := \lambda w$.
	If we took $v$ as the exact solution
	we would obtain
	$
	b(w, v)
	=
	\norm{w}{E}^2
	=
	\lambda^{-1} \norm{-v' + \lambda v}{L_2(J)}^2
	=
	\norm{v}{F}^2
	$.
	However,
	the exact solution is not necessarily an element of
	the discrete test space $F^k$,
	so we
	take
	$v \in F^k$ according to
	the implicit Euler scheme~\eqref{e:iE*}
	instead.
	By the equivalence
	of~\eqref{e:iE*}--\eqref{e:iE*-vf}
	we see that
	$ 
	b(w, v)
	=
	\int_J \duality{ w, -v' + \lambda v }{} \rd t
	=
	\int_J \duality{ w, \lambda w }{} \rd t
	=
	\norm{w}{E}^2
	$ 
	still holds.

	To conclude, it is enough to establish
	$\norm{w}{E} \geq \gamma_\sigma \norm{v}{F}$.
	To that end, we square \eqref{e:iE*} 
	with $f := \lambda w$
	on both sides
	and rearrange to obtain
	\[
		\label{e:p:iE*:n}
		\lambda^{-1} k_n^{-1} |v_n - v_{n-1}|^2
		+
		\lambda k_n |v_{n-1}|^2
		+
		|v_n - v_{n-1}|^2
		+
		|v_{n-1}|^2 - |v_n|^2
		=
		\lambda k_n |w_{n-1}^+|^2
		.
	\]
	Let
	$i_k v$ the denote the piecewise constant function with
	$i_k v(t_{n-1}^+) = v(t_{n-1})$
	for all $n = 1, \ldots, N$.
	The dual of $E$ is identified via the (unweighted) $L_2(J)$ inner product.
	We introduce
	the mesh-dependent norm
	\[
		\label{e:iii-F1}
		\normiii{v}{F}^2
		& :=
		\textstyle
		\norm{v'}{E'}^2
		+
		\norm{i_k v}{E}^2
		+
		|v(0)|^2
		+
		\sum_{n = 1}^N
		|v_n - v_{n-1}|^2
	\]
	and sum up~\eqref{e:p:iE*:n} over $n$.
	This yields the equality
	$\norm{w}{E} = \normiii{\tfrac12 v}{F}$,
	since
	$\int_0^1 |\phi(s)|^2 \rd s = 4 = \tfrac14 |\phi(0)|^2$.
	With $\sigma$ from~\eqref{e:sigma}
	we obtain the estimate
	(the last term is omitted for $n = N$)
	\[
	\label{e:p:iE*:In}
		\norm{v}{L_2(J_n)}^2
		\leq
		\tfrac12 k_n (|v_{n-1}|^2 + |v_n|^2)
		\leq
		\tfrac12 \norm{i_k v}{L_2(J_n)}^2
		+
		\tfrac12 \sigma \norm{i_k v}{L_2(J_{n+1})}^2
		.
	\]
	Summation over $n$ yields
	$
	\norm{v}{F}^2
	\leq
	2 (1 + \max\{1, \sigma\})
	\normiii{\tfrac12 v}{F}^2
	$.
	In concatenation,
	$ 
	\norm{w}{E}
	=
	\normiii{\tfrac12 v}{F}
	\geq
	\gamma_\sigma
	\norm{v}{F}
	,
	$ 
	as anticipated.
\end{proof}

The choice of the shape function
$\phi : s \mapsto (4 - 6 s)$ in
the trial space $E^k$
defining
the $\textrm{iE}^{\star}$ discretization
leads to uniform stability
as discussed above.
In view of the quasi-optimality estimate \eqref{e:u-qo}
we need to address
the approximation properties of this trial space $E^k$.
Unfortunately,
we do not have nestedness $E^k \subset E^{k+1}$.
Moreover, no matter how fine the temporal mesh,
$E^k$ does not approximate
the constant function.
To be precise,
let $P_d$ denote the $L_2$-orthonormal Legendre polynomial
(normalized to $P_d(1) = \sqrt{1 + 2d}$)
of degree $d\geq 0$ on the reference interval $(0,1)$.
For real $a, b$,
set $u := a P_0 + b P_1 + r$, where $r$ is
$E$-orthogonal to $P_0$ and $P_1$.
The $E$-orthogonal projection of $u$ onto
the span of the shape function
$\phi = P_0 - \sqrt{3} P_1$
is
$w := c \phi$ with $c = \tfrac14 ( a - \sqrt{3} b )$.
The error
$\norm{ u - w }{E}^2 = \lambda \tfrac14 |\sqrt{3} a + b|^2 + \norm{r}{E}^2$
may be large,
for example, if $u$ is constant.

\subsubsection{Common framework}
\label{s:dis:1:both}

On the $n$-th element
of the temporal mesh $\cT$ in~\eqref{e:cT},
let
{$
	\cN_n \subset
	\IntervalCO{t_{n-1}, t_n}
$}
be a set of $p \geq 1$ collocation nodes
(we choose the same $p$ for all $n$ for simplicity).
The compound element-wise
interpolation operator
based on these collocation nodes~$\cN_n$
is denoted by~$i_k$.
As the discrete test space $F^k \subset F$, we take the subspace
of continuous piecewise polynomials of degree $p$ with respect to~$\cT$. 
%
%
We introduce $i_k^\star \from i_k F^k \to F^k$ by
$\scalar{ i_k \CDOT, \CDOT }{L_2(J)} = \scalar{ \CDOT, i_k^\star \CDOT }{L_2(J)}$
on $F^k \times i_k F^k$.
The discrete trial space is then defined as $E^k := i_k^\star i_k F^k$.
Note that the dimensions $\dim E^k = \dim F^k$ match.

We are interested in two types of nodes:
Gauss--Legendre nodes
and
(left) Gauss--Radau nodes,
to which we refer as $\GL_p$ and $\GR_p$, respectively.
All temporal elements host the same type of nodes.
The lowest-order examples are
$\cN_n = \{ \tfrac12(t_{n-1} + t_n) \}$ for $\GL_1$
and
$\cN_n = \{ t_{n-1} \}$ for $\GR_1$,
corresponding to
the $\text{CN}^\star$ and $\text{iE}^\star$ schemes.
The shape functions on the reference element $(0, 1)$
for the space $E^k = i_k^\star i_k F^k$
are
(cf.~\cite[\S2.3]{AndreevSchweitzer2014})
\begin{enumerate}
	\item
		the Legendre polynomials $P_0, \ldots, P_{p-1}$
		for $\GL_p$,
		and
	\item
		the Legendre polynomials
		$P_0, \ldots, P_{p-2}$ together with $P_{p-1} - \tfrac{P_p(1)}{P_{p-1}(1)} P_p$
		for $\GR_p$.
\end{enumerate}
In particular,
for $p \geq 2$,
the $\GR_p$ family contains the piecewise constant functions,
which means that any function in $E$
can be approximated to arbitrary accuracy upon mesh refinement.

Define the mesh-dependent norm
$\normiii{\CDOT}{F}$ by
\begin{equation*}
	%
	\normiii{v}{F}^2
	  :=
	\norm{v'}{E'}^2
	+
	\norm{i_k v}{E}^2
	+
	|v(0)|^2
	+
	\begin{cases}
	0
	  & \text{for $\GL_p$},
	\\
	\sum_{n = 1}^N
	[v - i_k v]_{\to n}^2
	  & \text{for $\GR_p$}
	,
	\end{cases}
\end{equation*}
where $[f]_{\to n}$ denotes
$\lim_{t \to t_n^-} f(t)$.
This is the generalization of \eqref{e:iii-F1}.

Following~\cite[Proof of Thm.~3.3]{AndreevSchweitzer2014},
we can now show:
\begin{lemma}
	\label{l:b-stab}
	For any $v \in F^k$
	there exists a nonzero $w \in E^k = i_k^{\star} i_k F^k$
	such that
	\[
		\label{e:b-stab}
		b(w, v)
		\geq
		\norm{(i_k^\star)^{-1} w}{E}
		\normiii{v}{F}
		.
	\]
\end{lemma}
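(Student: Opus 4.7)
I would follow the strategy of \cite[Proof of Thm.~3.3]{AndreevSchweitzer2014}, adapted to the trial space $E^k = i_k^\star i_k F^k$. The goal is to exhibit, for every nonzero $v \in F^k$, an element $u \in i_k F^k$ such that $w := i_k^\star u \in E^k$ realises the claimed bound; since $(i_k^\star)^{-1} w = u$, the claim reduces to
\begin{equation*}
	b(i_k^\star u, v) \geq \norm{u}{E} \, \normiii{v}{F}.
\end{equation*}

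\textbf{Step 1 (decomposition of $b(i_k^\star u,v)$).} First I would split $-v' + \lambda v = -v' + \lambda i_k v + \lambda(v - i_k v)$. For the $\lambda v$ contribution, since $\lambda v \in F^k$, the adjoint relation $\scalar{i_k^\star u, f}{L_2(J)} = \scalar{u, i_k f}{L_2(J)}$ gives $\lambda \scalar{i_k^\star u, v}{L_2(J)} = \lambda \scalar{u, i_k v}{L_2(J)}$. For the $-v'$ contribution I would integrate by parts on each element $J_n$, yielding bulk terms against $w' \in L_2(J_n)$ together with the nodal contribution $v(0)$ and, for $\GR_p$, contributions from the right endpoints $t_n$ (where the collocation node is absent) in the form of $v(t_n)$ evaluated against the trace of $i_k^\star u$. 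These boundary data are exactly what the term $|v(0)|^2$ and the jump sum $\sum_n [v - i_k v]_{\to n}^2$ in $\normiii{v}{F}^2$ are designed to capture.

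\textbf{Step 2 (choice of $u$).} On each element $J_n$, using that $i_k F^k$ restricted to $J_n$ is the full space of polynomials of degree $\leq p-1$ (so $v'|_{J_n}$ and $i_k v|_{J_n}$ both lie in it), I would choose $u|_{J_n}$ as a linear combination of $-v'|_{J_n}$, $\lambda\,i_k v|_{J_n}$, together with the unique polynomial in the span of the relevant shape functions (the Legendre basis for $\GL_p$; the Legendre basis completed by $P_{p-1} - \tfrac{P_p(1)}{P_{p-1}(1)} P_p$ for $\GR_p$) that carries a $v(0)$ contribution on $J_1$ and, for $\GR_p$, a $[v-i_k v]_{\to n}$ contribution on $J_n$. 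The shape function $P_{p-1} - \tfrac{P_p(1)}{P_{p-1}(1)} P_p$ is designed precisely so that its right endpoint trace is nontrivial, which is what allows the jump compensation.

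\textbf{Step 3 (quadrature exactness).} With $u$ so chosen, the integrals $\scalar{u, v'}{L_2(J_n)}$ and $\scalar{u, i_k v}{L_2(J_n)}$ are integrands of degree $\leq 2p-2$, exactly integrated by the $p$-point Gauss--Legendre rule (degree $2p-1$) and by the $p$-point Gauss--Radau rule (degree $2p-2$). Applying this exactness turns $b(i_k^\star u, v)$ into an orthogonal sum that equals $\normiii{v}{F}^2$ (up to a fixed positive factor), while $\norm{u}{E}^2$ equals the same quantity up to the same factor by construction; a single Cauchy--Schwarz step then yields the desired inequality with constant $1$.

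\textbf{Main obstacle.} The bookkeeping in the $\GR_p$ case: the Gauss--Radau nodes include $t_{n-1}$ but omit $t_n$, and the right-endpoint contribution from the elementwise integration by parts is precisely what must be absorbed by the extra summand $\sum_n [v - i_k v]_{\to n}^2$ in $\normiii{v}{F}^2$. Getting this compensation to balance exactly---together with verifying that $u$ as defined above does lie in $i_k F^k$ (rather than in a slightly larger space)---is the delicate point. The $\GL_p$ case is simpler since the nodes are interior, so no endpoint contributions arise and the jump term in $\normiii{v}{F}^2$ is absent.
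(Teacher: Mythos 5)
Your reduction to finding $u \in i_k F^k$ with $b(i_k^\star u, v) \geq \norm{u}{E}\,\normiii{v}{F}$ is correct, and you have identified the right ingredients (the adjoint relation for $i_k^\star$, the special role of the right-endpoint traces in the Gauss--Radau case, quadrature exactness as the underlying mechanism). But as written the argument has a genuine gap in two places. First, the construction of $u$ in Step~2 is never pinned down: the coefficients of the ``linear combination of $-v'|_{J_n}$, $\lambda\, i_k v|_{J_n}$, and a jump-carrying shape function'' are exactly the delicate bookkeeping you flag as the main obstacle, and without them nothing in Step~3 can be verified. Second, the concluding logic of Step~3 does not deliver the inequality: if $b(i_k^\star u, v) = c\,\normiii{v}{F}^2$ and $\norm{u}{E}^2 = c\,\normiii{v}{F}^2$ for the same factor $c$, then $b(i_k^\star u, v) = \norm{u}{E}\,\normiii{v}{F}\cdot\sqrt{c}$, so you need $c \geq 1$ (in fact $c = 1$), and invoking Cauchy--Schwarz there is a non sequitur --- the step you need is an identity, not an inequality.

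The paper avoids the explicit construction entirely. It defines $u := \Gamma v \in i_k F^k$ implicitly as the $E$-Riesz representative of the functional $\tilde{w} \mapsto b(i_k^\star \tilde{w}, v)$ on $i_k F^k$, so that the choice $\tilde{w} = \Gamma v$ automatically gives $b(i_k^\star \tilde w, v) = \norm{\Gamma v}{E}^2$. The whole proof then reduces to showing $\norm{\Gamma v}{E} = \normiii{v}{F}$, which is done by expanding $\norm{\Gamma v}{E}^2 = \norm{\Gamma v - i_k v}{E}^2 + 2\scalar{\Gamma v, i_k v}{E} - \norm{i_k v}{E}^2$: the first term is computed as $\norm{v'}{E'}^2$ by a supremum argument, and the cross term as $\norm{i_k v}{E}^2 - \scalar{i_k v, v'}{L_2(J)}$, where the cited summation-by-parts identity from \cite[Lem.~3.1]{AndreevSchweitzer2014} produces exactly the $\tfrac12|v(0)|^2$ and (for $\GR_p$) $\tfrac12\sum_n [v - i_k v]_{\to n}^2$ contributions. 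If you want to salvage your constructive route, the target is to show that your explicit $u$ coincides with this $\Gamma v$ and that the resulting identity holds with factor exactly $1$; but the implicit definition makes both issues disappear at once.
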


\begin{proof}
	The space $i_k F^k \subset E$ carries the norm of $E$.
	Let $v \in F^k$.
	We first show that
	$\norm{ \Gamma v }{E} = \normiii{ v }{F}$,
	where
	$\Gamma : F^k \to i_k F^k$ is defined by
	\begin{equation*}
		\scalar{ \Gamma v, \tilde{w} }{E}
		=
		b(i_k^\star \tilde{w}, v)
		\quad
		\forall
		(v, \tilde{w}) \in F^k \times i_k F^k
		.
	\end{equation*}
	To this end, we expand
	$
	\norm{ \Gamma v }{E}^2 =
	\norm{ \Gamma v - i_k v }{E}^2
	+
	2 \scalar{ \Gamma v, i_k v }{E}
	-
	\norm{ i_k v }{E}^2
	$.
	For the first term we have
	\begin{equation*}
		\norm{ \Gamma v - i_k v }{E}
		=
		\sup_{ \tilde{w} \in S(i_k F^k) }
		\scalar{ \Gamma v - i_k v, \tilde{w} }{E}
		=
		\sup_{ \tilde{w} \in S(i_k F^k) }
		\left\{
		b(i_k^\star \tilde{w}, v)
		-
		\scalar{ i_k v, \tilde{w} }{E}
		\right\}
		=
		\norm{ v' }{E'}
		.
	\end{equation*}
	For the second term,
	we use the definition of $\Gamma$,
	followed by \cite[Lem.~3.1]{AndreevSchweitzer2014}:
	\begin{equation*}
		\scalar{ \Gamma v, i_k v }{E}
		=
		\norm{ i_k v }{E}^2
		-
		\scalar{ i_k v, v' }{L_2(J)}
		=
		\norm{ i_k v }{E}^2
		+
		\tfrac12 |v(0)|^2
		+
		\begin{cases}
		0
		  &
		(\GL_p)
		,
		\\
		\tfrac12 \sum_{n=1}^N [v - i_k v]_{\to n}^2
		  & (\GR_p)
		.
		\end{cases}
	\end{equation*}
	Hence, $\norm{ \Gamma v }{E} = \normiii{ v }{F}$.
	Now take $\tilde{w} := \Gamma v$.
	Then
	$
	b(i_k^\star \tilde{w}, v)
	=
	\scalar{ \Gamma v, \tilde{w} }{E}
	=
	\norm{ \Gamma v }{E}^2
	=
	\norm{ \tilde{w} }{E}
	\normiii{ v }{F}
	.
	$
	The claim \eqref{e:b-stab} follows
	for $w := i_k^\star \tilde{w}$.
\end{proof}

In order to convert \eqref{e:b-stab}
to a statement with the original norms,
we need to compare these norms.
First,
it can be shown as in \cite[\S3.2.2]{AndreevSchweitzer2014}
that
$ 
\norm{w}{E}
\leq
\norm{i_k^\star}{}
\norm{(i_k^\star)^{-1} w}{E}
\leq
2
\norm{(i_k^\star)^{-1} w}{E}
.
$ 

Second, we need to quantify $\norm{v}{F} \lesssim \normiii{v}{F}$.
For
the Gauss--Radau family $\GR_p$
we can, for example, use
the estimate
(akin to~\eqref{e:p:iE*:In}; see~\cite[\S3.4]{AndreevSchweitzer2014})
\begin{equation*}
	\norm{ v - i_k v }{L_2(t_{n-1}, t_n)}^2
	\leq
	\tfrac{ 2 p^2 }{ 4 p - 1 / p }
	\left(
	\norm{ i_k v }{L_2(t_{n-1}, t_n)}^2
	+
	\tfrac{ k_n }{ k_{n+1} }
	\norm{ i_k v }{L_2(t_n, t_{n+1})}^2
	\right)
\end{equation*}
to derive
$ 
\norm{v}{F} \leq C \sqrt{p (1 + \sigma)} \normiii{v}{F}
$ 
with the backward successive temporal element ratio~$\sigma$
from~\eqref{e:sigma} and
a universal constant $C > 0$.
Therefore,
the discrete inf-sup condition~\eqref{e:ga-EF}
holds for the $\GR_p$ family
with
\[
	\label{e:ga-p}
	\gamma_k \geq \gamma_0 / \sqrt{p (1 + \sigma)}
	,
\]
where $\gamma_0 > 0$ is a constant independent of all parameters.
The Gauss--Legendre family $\GL_p$
suffers from the same
potential instability
as the $\text{CN}^\star$ scheme,
see \S\ref{s:dis:1:CN*}.

Consider now the solution $u^k$
to \eqref{e:uk}.
From the ODE \eqref{e:uode},
the reconstruction
\begin{equation*}
	%
	\widehat{u}^k := g + \int_0^t \{ f(s) - \lambda u^k(s) \} \rd s
\end{equation*}
can be expected to provide a better approximation
of the exact solution.
With~\eqref{e:uk} we find
the orthogonality property
$\scalar{\widehat{u}^k - u^k, v'}{E} = 0$
for all $v \in F^k$.
Let
\[
	\label{e:Q}
	q_k \from E \to \partial_t F^k
\]
be the orthogonal projection
(in $E$ or in $L_2(J)$).
The orthogonality property gives
$q_k \widehat{u}^k = q_k u^k$.
Hence,
the postprocessed solution
$\bar{u}^k := q_k u^k$
is an approximation of the reconstruction $\widehat{u}^k$.
In the case of Gauss--Legendre collocation nodes,
$i_k^\star$ is the identity,
so that $E^k = i_k F^k$,
and therefore $q_k u^k = u^k$ has no effect.
In the Gauss--Radau case, however,
the projection is useful to improve
the convergence rate upon mesh refinement,
as will be seen in \S\ref{s:convergence}.

Note that $q_k$ is injective on $E^k$ in both cases.
In the Gauss--Radau case,
$q_k^{-1}$ sends
the shape function
$P_{p-1}$
to
$P_{p-1} - \tfrac{P_p(1)}{P_{p-1}(1)} P_p$.
Since $P_d(1) = \sqrt{2 d + 1}$, this gives
\[
	\label{e:normqk}
	\norm{ q_k^{-1} }{}^2
	=
	1 + \tfrac{ 2 p + 1 }{ 2 (p-1) + 1 }
	.
\]

\subsection{Petrov--Galerkin approximations}
\label{s:dis:PG}

In this subsection we
comment on
Petrov--Galerkin
discretizations of
the generic linear variational problem
\begin{equation*}
	\text{Find}
	\quad
	u \in X
	\TEXT{s.t.}
	\duality{Bu, v}{} = \duality{\ell, v}{}
	\quad
	\forall v \in Y
	,
\end{equation*}
where
$X$ and $Y$ are
\emph{normed vector spaces}.
This generalization away from Hilbert spaces
(that can also be found e.g.~in \cite{Stern2015})
will
allow us
to address
the variational problem
\eqref{e:U-mul}.

We assume that
$X_h \times Y_h \subset X  \times Y$
are finite-dimensional subspaces
with nonzero $\dim X_h = \dim Y_h$.
Here, $h$ refers to the ``discrete'' nature
of these subspaces,
and
the pair $X_h \times Y_h$ is fixed.
We write
$\norm{ \CDOT }{Y_h'} := \sup_{v \in S(Y_h)} |\duality{ \CDOT, v }{}|$.

In order
to admit variational crimes
we suppose
that we have access
to an operator
$\bar{B} \from X \to Y'$
that approximates $B$
(although $\bar{B} \from X \to Y_h'$ suffices).
For this approximation
we assume the discrete inf-sup condition
in the form of a constant
$\bar{\gamma}_h > 0$
such that
$\norm{ \bar{B} w_h }{Y_h'} \geq \bar{\gamma}_h \norm{ w_h }{X}$
for all $w_h \in X_h$.
The proof
of the following Proposition
is obtained by standard arguments
(for the discussion of the constant ``$1 + $''
see \cite{Andreev-QUASI,Stern2015,XuZikatanov2003}).

\begin{proposition}
	\label{p:well:crime}
	Fix $u \in X$.
	Under the above assumptions
	there exists a unique $u_h \in X_h$
	such that
	\begin{equation*}
		\duality{ \bar{B} u_h, v_h }{} = \duality{ B u, v_h }{}
		\quad
		\forall v_h \in Y_h
		.
	\end{equation*}
	The mapping
	$u \mapsto u_h$ is linear with
	$\norm{u_h}{X} \leq \bar{\gamma}_h^{-1} \norm{ B u }{Y_h'}$,
	and satisfies
	the quasi-optimality estimate
	\begin{equation*}
		%
		\norm{ u - u_h }{X}
		\leq
		(1 + \bar{\gamma}_h^{-1} \norm{ \bar{B} }{})
		\inf_{w_h \in X_h}
		\norm{ u - w_h }{X}
		+
		\bar{\gamma}_h^{-1}
		\norm{(B - \bar{B}) u }{Y_h'}
		.
	\end{equation*}
\end{proposition}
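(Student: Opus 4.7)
The plan is to follow the classical Petrov--Galerkin argument, adapted to the normed (non-Hilbert) setting, in three stages: unique solvability, stability, and quasi-optimality.

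First, for existence and uniqueness, I would exploit the finite-dimensionality $\dim X_h = \dim Y_h < \infty$. The discrete equation $\duality{\bar B u_h, v_h}{} = \duality{B u, v_h}{}$ for all $v_h \in Y_h$ is a square linear system on $X_h$. Injectivity of the associated operator $X_h \to Y_h'$ (identified by duality pairing on $Y_h$) is immediate from the inf-sup assumption: if $\duality{\bar B w_h, v_h}{} = 0$ for all $v_h \in Y_h$, then $\norm{\bar B w_h}{Y_h'} = 0$, hence $\bar\gamma_h \norm{w_h}{X} \leq 0$ forces $w_h = 0$. Equality of dimensions then yields surjectivity, and linearity of $u \mapsto u_h$ is inherited from linearity of the equation.

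Second, the stability estimate follows directly from the inf-sup condition applied to $u_h \in X_h$ together with the defining equation:
\begin{equation*}
\bar\gamma_h \norm{u_h}{X} \leq \norm{\bar B u_h}{Y_h'} = \sup_{v_h \in S(Y_h)} |\duality{\bar B u_h, v_h}{}| = \sup_{v_h \in S(Y_h)} |\duality{B u, v_h}{}| = \norm{B u}{Y_h'}.
\end{equation*}

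Third, for the quasi-optimality bound, I would use the standard triangle-inequality trick. Fix an arbitrary $w_h \in X_h$ and write $\norm{u - u_h}{X} \leq \norm{u - w_h}{X} + \norm{w_h - u_h}{X}$. To control the second summand, I would apply the inf-sup condition to $w_h - u_h \in X_h$ and rewrite the numerator as
\begin{equation*}
\duality{\bar B(w_h - u_h), v_h}{} = \duality{\bar B(w_h - u), v_h}{} + \duality{(\bar B - B) u, v_h}{},
\end{equation*}
using the discrete equation to replace $\duality{\bar B u_h, v_h}{}$ by $\duality{B u, v_h}{}$. Bounding the two terms by $\norm{\bar B}{} \norm{w_h - u}{X}$ and $\norm{(\bar B - B) u}{Y_h'}$ respectively, dividing by $\bar\gamma_h$, and adding $\norm{u - w_h}{X}$ yields the desired inequality; taking the infimum over $w_h \in X_h$ finishes the argument. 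The presence of the ``$1 + $'' in the constant is the standard Xu--Zikatanov-style sharpening that comes precisely from this triangle inequality approach rather than from an oblique-projection identity.

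I do not expect any real obstacle here: the absence of a Hilbert structure is harmless because $X_h, Y_h$ are finite-dimensional and the only tools used are the inf-sup condition, linearity, and the triangle inequality. The one point that merits care is making sure the quantity $\norm{\bar B}{}$ always refers to the operator norm $X \to Y'$ (so that the estimate $|\duality{\bar B(w_h - u), v_h}{}| \leq \norm{\bar B}{} \norm{w_h - u}{X} \norm{v_h}{Y}$ is legal), while the inf-sup only involves the weaker $\norm{\cdot}{Y_h'}$ on the left-hand side.
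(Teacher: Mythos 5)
Your proof is correct and is precisely the ``standard argument'' the paper invokes without writing out: injectivity from the inf-sup condition plus equal finite dimensions for solvability, the inf-sup bound for stability, and the triangle inequality with the splitting $\duality{\bar B(w_h-u_h),v_h}{}=\duality{\bar B(w_h-u),v_h}{}+\duality{(\bar B-B)u,v_h}{}$ for quasi-optimality. The only (immaterial) slip is in your closing aside: the Xu--Zikatanov contribution is the \emph{removal} of the ``$1+$'' in the Hilbert-space case via the idempotent-norm identity, whereas the ``$1+$'' retained here is exactly what the triangle-inequality route produces and is in general unavoidable in the Banach-space setting.
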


\subsection{Tensorized discretizations}
\label{s:dis:1:o}

Recall
the definition of the tensor product spaces
$E_{2/\pi}$ and $F_{2/\epsilon}$
from~\eqref{e:E2F2} and~\eqref{e:EpiFeps}.
Recall also that
we can extend $B := (b \otimes b)$
to an isometric isomorphism
$B\from E_2 \to F_2'$
or
$B\from E_\pi \to F_\epsilon'$.
We discuss here these two viewpoints in parallel.
Consider the variational formulation
\[
	\label{e:BUL}
	\text{Find}
	\quad
	U \in E_{2/\pi}
	\quad\text{s.t.}\quad
	B(U, v)
	=
	\ell(v)
	\quad
	\forall v \in F_{2/\epsilon}
	,
\]
where $\ell \in F_{2/\epsilon}'$.
If $E^k \times F^k$ is a discretization for~\eqref{e:uode-vf}
then
the pair of tensorized subspaces
\[
	\label{e:EkxFk}
	E_{2/\pi}^k \times F_{2/\epsilon}^k
	:=
	(E^k \otimes E^k) \times (F^k \otimes F^k)
	\subset
	E_{2/\pi} \times F_{2/\epsilon}
\]
is a natural choice
for the discretization for \eqref{e:BUL}.
The subscript $2$ or $\pi$ (and $2$ or $\epsilon$)
indicates which norm
the algebraic tensor product
$E^k \otimes E^k$ (and $F^k \otimes F^k$)
is equipped with;
since these spaces are finite-dimensional,
no norm-closure is necessary.

We now turn to the discrete variational formulation
\[
	\label{e:BUL-k}
	\text{Find}
	\quad
	U^k \in E_{2/\pi}^k
	\quad\text{s.t.}\quad
	B(U^k, v)
	=
	\ell(v)
	\quad
	\forall v \in F_{2/\epsilon}^k
	.
\]
The inf-sup constant
required in the analysis
is the square $\gamma_k^2$
of the discrete inf-sup constant
$\gamma_k$ from~\eqref{e:ga-EF}
in both cases:
\[
	\label{e:ga2}
	\inf_{w \in S(E_2^k)}
	\sup_{v \in S(F_2^k)}
	B(w, v)
	=
	\gamma_k^2
	=
	\inf_{w \in S(E_\pi^k)}
	\sup_{v \in S(F_\epsilon^k)}
	B(w, v)
	.
\]
Indeed, consider the $\pi/\epsilon$ situation.
For $w \in E^k$ let $b_k w$ denote
the restriction of $b w$ to $F^k$.
The discrete inf-sup condition~\eqref{e:ga-EF}
says that
$b_k \from E^k \to (F^k)'$ is an isomorphism with
$\norm{b_k^{-1}}{} = \gamma_k^{-1}$.
The mapping
$B_k := b_k \otimes b_k \from E^k \otimes_\pi E^k \to (F^k)' \otimes_\pi (F^k)'$
has the inverse $b_k^{-1} \otimes b_k^{-1}$.
It is therefore an isomorphism
with
$\norm{B_k^{-1}}{} = \gamma_k^{-2}$.
The identification
$(F^k)' \otimes_\pi (F^k)' \cong (F_\epsilon^k)'$
shows that for any $w \in E_\pi^k$,
the functional $B_k w$
is the restriction of $B w$ to $F_\epsilon^k$.
This gives~\eqref{e:ga2}.

Proposition~\ref{p:well:crime}
(with $\bar{B} := B$)
provides
a unique solution $U^k \in E^k \otimes E^k$
to the discrete variational problem~\eqref{e:BUL-k}
that approximates the solution $U$ of \eqref{e:BUL}
as soon as $\gamma_k > 0$ in \eqref{e:ga-EF}.
The solution is, moreover, quasi-optimal
(recall that $\norm{B}{} = 1$):
\[
	\label{e:Ukquasiopt}
	\norm{ U - U^k }{2/\pi}
	\leq
	(1 + \gamma_k^{-2})
	\inf_{w \in E^k \otimes E^k}
	\norm{ U - w }{2/\pi}
	.
\]

We will also be interested in the postprocessed solution
{$
	\bar{U}^k := (q_k \otimes q_k) U^k
	,
$}
where $q_k \from E \to \partial_t F^k$ is
the orthogonal projection in~\eqref{e:Q}.

Analogously to
Lemma \ref{l:psd}
one proves:

\begin{lemma}
	\label{l:discrete-psd}
	The discrete solution~$U^k$ to~\eqref{e:BUL-k}
	is SPSD
	if and only if $\ell$ is
	SPSD on $F^k \otimes F^k$.
	The same is true for the postprocessed solution.
\end{lemma}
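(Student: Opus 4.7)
The plan is to mimic the proof of Lemma~\ref{l:psd}, substituting the continuous isomorphism $b \from E \to F'$ with its discrete analogue. By the inf-sup condition~\eqref{e:ga-EF} together with $\dim E^k = \dim F^k$, the map $b_k \from E^k \to (F^k)'$ is a bijection, and transposing it gives the bijection $F^k \to (E^k)'$, $\psi \mapsto b(\CDOT,\psi)|_{E^k}$. This produces a linear bijection $\varphi \leftrightarrow \psi$ between $E^k$ and $F^k$ defined by $b(w,\psi) = \scalar{w,\varphi}{L_2(J)}$ for all $w \in E^k$, a discrete version of the adjoint-ODE identification used in the proof of Lemma~\ref{l:psd}. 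For such a pair $(\varphi,\tilde\varphi) \leftrightarrow (\psi, \tilde\psi)$, the key identity
\begin{equation*}
  \scalar{U^k, \varphi \otimes \tilde\varphi}{L_2(J \times J)}
  = B(U^k, \psi \otimes \tilde\psi)
  = \ell(\psi \otimes \tilde\psi)
\end{equation*}
follows by expanding $U^k \in E^k \otimes E^k$ in any basis and by~\eqref{e:BUL-k}, noting that $\psi \otimes \tilde\psi \in F^k \otimes F^k$. Since $U^k$ lies in $E^k \otimes E^k$, its SPSD in the sense of~\eqref{e:JSPSD}/\eqref{e:HSPSD} against $\varphi, \tilde\varphi \in E$ reduces to testing with $\varphi, \tilde\varphi \in E^k$; through the bijection $\varphi \leftrightarrow \psi$ this translates exactly to SPSD of $\ell$ on $F^k \otimes F^k$, which gives the first equivalence.

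For the postprocessed solution I would use a coordinate argument. Fix a basis $\{\phi_i\}$ of $E^k$ and write $U^k = \sum_{i,j} u_{ij} \phi_i \otimes \phi_j$. Since the linear map $\varphi \mapsto (\scalar{\phi_i,\varphi}{2})_i \from E \to \IR^n$ is surjective (the $\phi_i$ being linearly independent in the Hilbert space $E$), $U^k$ is SPSD if and only if the coefficient matrix $(u_{ij})$ is symmetric positive semi-definite. Because $q_k$ is injective on $E^k$ (see \S\ref{s:dis:1:both}), $\{q_k\phi_i\}$ is a basis of $\partial_t F^k$, and $\bar{U}^k = \sum_{i,j} u_{ij} (q_k\phi_i) \otimes (q_k\phi_j)$; applying the same criterion to $\bar{U}^k$ yields exactly the same matrix condition. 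Hence $\bar{U}^k$ and $U^k$ are simultaneously SPSD, which combined with the first equivalence finishes the proof.

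I do not expect a genuine obstacle: the argument is essentially a transcription of Lemma~\ref{l:psd} into the finite-dimensional setting. The mild subtlety is the two-step reduction in the first part, namely that SPSD of $U^k \in E^k \otimes E^k$ needs only to be checked against $E^k \otimes E^k$ rather than against all of $E \otimes E$, and that a discrete Riesz-type correspondence $E^k \leftrightarrow F^k$ can be constructed. Both rest on the discrete inf-sup condition~\eqref{e:ga-EF} combined with the matching finite dimensions, which also ensure bijectivity of the transposed map.
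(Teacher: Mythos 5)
Your proof is correct and follows exactly the route the paper intends (the paper omits the proof, stating only that it is analogous to Lemma~\ref{l:psd}): the discrete Riesz-type correspondence $E^k \leftrightarrow F^k$ via the inf-sup condition and matching dimensions, the reduction of SPSD testing for $U^k \in E^k \otimes E^k$ to test functions in $E^k$, and the coefficient-matrix criterion for the postprocessed solution, which the paper itself records later in \S\ref{s:dis:2-mul}. No gaps.
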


\subsection{Second moment discretization: additive noise}
\label{s:dis:2-add}

In view of the previous section,
any discretization pair
$E^k \times F^k$ satisfying the discrete inf-sup condition~\eqref{e:ga-EF}
induces a valid discretization of the variational problem~\eqref{e:U-add}
for the second moment of the solution process
to
the stochastic ODE with additive noise~\eqref{e:sode-add}
if we choose
the trial space as $E^k \otimes E^k$
and
the test space as $F^k\otimes F^k$.
The functional on the right-hand side of \eqref{e:BUL}
is then
$
\ell
:=
\e{X_0^2} (\Dirac_0 \otimes \Dirac_0) + \mu^2 \diag
.
$
Moreover,
the discrete solution satisfies
the quasi-optimality estimates in~\eqref{e:Ukquasiopt}
simultaneously with respect to
$\norm{\CDOT}{2}$ and $\norm{\CDOT}{\pi}$,
because $\ell \in F_\epsilon' \subset F_2'$.

\subsection{Second moment discretization: multiplicative noise}
\label{s:dis:2-mul}

As in the continuous case
for sufficiently small values of the volatility $\rho$,
namely in the range
\[
	\label{e:range}
	0 \leq \rho^2 < 2 \lambda \gamma_k^2
	,
\]
we immediately obtain
a discrete inf-sup condition
for
the operator $B - \rho^2 \DIAG$.
The purpose of this section is to address
the whole range $\rho \geq 0$.

We will focus on the
$\text{CN}^\star$ and $\text{iE}^\star$ discretizations
discussed in \S\S\ref{s:dis:1:CN*}--\ref{s:dis:1:iE*},
although with some work,
our methods may be adapted
to higher-order schemes from \S\ref{s:dis:1:both}.
Throughout,
we
assume that the discretization pair $E^k \times F^k \subset E \times F$
satisfies
the discrete inf-sup condition~\eqref{e:ga-EF}.
The discrete trial and test spaces
{$
	E_\pi^k \times F_\epsilon^k
	\subset
	E_\pi \times F_\epsilon
$}
are defined as
in \eqref{e:EkxFk}.

We introduce some more notation.
In what follows,
the default range of the indices
(we use $m$ as an index, since the first moment does not appear anymore)
is
\begin{equation*}
	0 \leq i, j \leq N-1
	\TEXT{and}
	1 \leq m, n \leq N
	.
\end{equation*}
Recall that
the discrete test space $F^k \subset F$
consists of continuous piecewise affine functions with respect
to the temporal mesh $\cT$ in~\eqref{e:cT}
that vanish at the terminal time~$T$.
It is equipped with the hat function basis $\{v_i\}_i$,
determined by $v_i(t_j) = \krondelta_{i j}$.
The basis functions $\{ e_n \}_n$ of the discrete trial space $E^k \subset E$
are
supported on $\operatorname{supp}(e_n) = [t_{n-1}, t_n]$
in both schemes.
Specifically,
$e_n$
is a constant for $\text{CN}^\star$
and
is a dilated translate of
the shape function $\phi : s \mapsto (4 - 6 s)$ for $\text{iE}^\star$.
The following statements
do not depend on the scaling of the basis functions,
if not specified otherwise.

\subsubsection{The discrete problem}

In the multiplicative case,
the trace product~$\DIAG$ from~\eqref{e:def:DIAG}
appears in the variational problem~\eqref{e:U-mul}
for the second moment.
The basis functions $\{e_n\}_n \subset E^k$
for the $\text{iE}^\star$ discretization
lead to an inconsistency in the $\DIAG$ term,
see \S\ref{s:iE2}.
For this reason, we introduce
the approximate trace product
\[
	\label{e:DIAGk}
	\DIAG^k \from E_\pi \times F_\epsilon \to \IR
	,
\]
to be specified below.
We require
that $\DIAG^k$ reproduces the following properties of the exact
trace product $\DIAG$:
\begin{enumerate}[label={(\roman*)}]
	\item\label{enum:DeltaSymm}

	\emph{Symmetry and definiteness}:
	for every SPSD $w \in E^k_\pi$,
	the functional $\DIAG^k w$ is
	SPSD on $F^k \otimes F^k$, i.e.,
	\begin{equation*}
		%
		\DIAG^k(w, \psi \otimes \tilde\psi) = \DIAG^k(w, \tilde\psi \otimes \psi)
		\TEXT{and}
		\DIAG^k(w, \psi \otimes \tilde\psi) \geq 0
		\quad
		\forall \psi, \tilde\psi \in F^k.
	\end{equation*}
	\item\label{enum:DeltaSparse}
	\emph{Locality}:
	\begin{equation*}
		\DIAG^k(e_m \otimes e_n, v_i \otimes v_j) \neq 0
		\quad
		\text{only if}
		\quad
		m=n
		\text{\; and \;}
		i, j \in \{ n-1, n \}.
	\end{equation*}

	\item\label{enum:DeltaBdd}

	\emph{Bilinearity and continuity on $E_\pi \times F_\epsilon$}.
\end{enumerate}
The corresponding approximation of the operator $\cB$
is defined as
$\cB^k := B - \rho^2 \DIAG^k$.
We are now interested in the solution
of the discrete variational problem
\[
	\label{e:vfk}
	\text{Find}
	\quad
	U^k\in E^k_\pi
	\quad
	\text{s.t.}
	\quad
	\cB^k(U^k,v) = \ell(v)
	\quad
	\forall v \in F^k_\epsilon
\]
which approximates
\eqref{e:U-mul-L}.

\subsubsection{Well-posedness of the discrete problem}

The solution~$U^k$ to~\eqref{e:vfk} can be expanded
in terms of the basis
$\{e_m \otimes e_n\}_{mn}$ of $E_\pi^k$ as
\[
	\label{e:Ukexpansion}
	U^k = \sum_{mn} U_{mn} (e_m \otimes e_n)
	\quad
	\text{with}
	\quad
	U_{mn}
	=
	\tfrac{\scalar{U^k, e_m \otimes e_n}{2}}{\norm{e_m}{E}^2 \norm{e_n}{E}^2}
	.
\]
We combine its coefficients in the $N\times N$ matrix $\mathbf{U} := (U_{m n})_{mn}$.
Furthermore,
we define the values
\begin{equation*}
	%
	b_{in} := b(e_n, v_i)
	\TEXT{and}
	\ell_{ij} := \ell(v_i \otimes v_j)
	.
\end{equation*}
If the discrete inf-sup condition~\eqref{e:ga-EF} is satisfied
then
$b_{n-1,n} \neq 0$ follows.

The sparsity assumption on $\DIAG^k$
together with the fact that the discretization pair
$E^k_\pi \times F^k_\epsilon$
is a tensor product discretization
allow for an explicit formula
for the diagonal entries of $\mathbf{U}$.
This is presented in the lemma below.

For future purpose, we note that $w \in E^k \otimes E^k$
is SPSD
if and only if
the matrix of coefficients
$\mathbf{w} := (w_{m n})_{mn}$
with respect to $\{e_m \otimes e_n\}_{mn}$ is.
Indeed, if $\varphi \in L_2(J)$
and
$\boldsymbol{\varphi} = (\scalar{e_n,\varphi}{L_2(J)})_n \in \IR^N$
then
{$
	\boldsymbol{\varphi}^\T \mathbf{w} \boldsymbol{\varphi}
	=
	\sum_{mn} w_{mn} \scalar{e_m,\varphi}{L_2(J)} \scalar{e_n,\varphi}{L_2(J)}
	=
	\scalar{w, \varphi\otimes\varphi}{L_2(J\times J)}
	.
$}

According to the locality assumption~\ref{enum:DeltaSparse},
the nonzero
values of $\DIAG^k$
(as acting on the basis functions)
can be combined in the $2 \times 2$ matrices
\[
	\label{e:DIAGn}
	\boldsymbol{\DIAG}^n
	:=
	\begin{pmatrix}
	\DIAG^k(e_n \otimes e_n, v_{n-1} \otimes v_{n-1}) & \DIAG^k(e_n \otimes e_n, v_{n-1} \otimes v_n)
	\\
	\DIAG^k(e_n \otimes e_n, v_n     \otimes v_{n-1}) & \DIAG^k(e_n \otimes e_n, v_n     \otimes v_n)
	\end{pmatrix},
	\quad
	1 \leq n \leq N-1
	,
\]
and in $\boldsymbol{\DIAG}^N := \DIAG^k(e_N\otimes e_N, v_{N-1}\otimes v_{N-1})$.
The foregoing remark and
Assumption~\ref{enum:DeltaSymm} on $\DIAG^k$
imply
that
each $\boldsymbol{\DIAG}^n$
is SPSD.

We define
\[
	\label{e:betan}
	\beta_n
	:=
	(
		1 - \rho^2 b_{n-1,n}^{-2} \DIAG^n_{11}
	)^{-1},
	\quad
	n = 1, \ldots, N
	,
\]
where
$\DIAG^n_{pq}$ denotes the $(p,q)$-th entry in the matrix $\boldsymbol{\DIAG}^n$,
and for $n \geq 2$:
\[
	\label{e:thetan}
	\theta_n & :=   b_{n-1,n}^{-1} b_{n-1,n-1},  \qquad
	\\
	\label{e:alphan}
	\alpha_n & := \beta_n
	\left[
		\theta_n^2 +
		\rho^2  b_{n-1,n}^{-2}
		\left(
			\DIAG^{n-1}_{22} - 2 b_{n-2,n-1}^{-1} b_{n-1,n-1} \DIAG^{n-1}_{12}
		\right)
	\right].
\]
We note that
\[
	\label{e:abt-scale}
	\tfrac{\norm{e_n}{E}^2}{\norm{e_{n-1}}{E}^2} \alpha_n
	,
	\quad
	\tfrac{\norm{e_n}{E}}{\norm{e_{n-1}}{E}} \theta_n
	,
	\TEXT{and}
	\beta_n
\]
do note depend on the scaling of the basis $\{ e_n \}_n$.

For technical reasons we
also introduce
the function $G^k \in E_\pi^k$ as the
solution
(which is well-defined under the inf-sup condition \eqref{e:ga-EF}/\eqref{e:ga2})
to
\[
	\label{e:vfgk}
	\text{Find}
	\quad
	G^k \in E^k_{\pi}
	\quad\text{s.t.}\quad
	B(G^k, v) = \ell(v)
	\quad
	\forall v \in F^k_\epsilon.
\]
Let $G_{m n}$ denote its coefficients
with respect to $\{e_m \otimes e_n\}_{mn}$.

\begin{lemma}
	\label{l:Uknn}
	Let $\ell \in F_\epsilon'$.
	Assume that $\beta_n$ is finite for all $n$.
	Then there exists a unique solution $U^k \in E_\pi^k$
	to the discrete variational problem~\eqref{e:vfk}.
	Its diagonal coefficients
	in~\eqref{e:Ukexpansion}
	are
	\[
		\label{e:Uknn:kn}
		U_{nn} =
		\beta_n
		G_{nn} +
		\sum_{m=1}^{n-1}
		G_{mm}
		(\beta_m \alpha_{m+1} - \beta_{m+1} \theta_{m+1}^2 )
		\prod_{\nu = m+2}^{n} \alpha_\nu
		.
	\]
\end{lemma}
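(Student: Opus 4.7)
The plan is to expand~\eqref{e:vfk} in coordinates with respect to the bases $\{e_m\otimes e_n\}$ and $\{v_i\otimes v_j\}$ and exploit the sparsity of the resulting matrix equation to derive a scalar recursion for the diagonal entries $U_{nn}$, which is then solved explicitly.

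Testing~\eqref{e:vfk} against $v_i\otimes v_j$ and using the locality~\ref{enum:DeltaSparse} of $\DIAG^k$ gives the matrix equation $\mathbf{B}\mathbf{U}\mathbf{B}^\T - \rho^2\mathbf{D} = (\ell_{ij})_{ij}$, where $\mathbf{B}:=(b_{im})_{im}$ is lower bidiagonal (since $e_m$ is supported on $[t_{m-1},t_m]$, the entry $b_{im}$ vanishes unless $i\in\{m-1,m\}$) with nonzero diagonal entries $b_{n-1,n}$ (forced by the inf-sup condition~\eqref{e:ga-EF}), and where $\mathbf{D}$ is tridiagonal and depends on $\mathbf{U}$ only through its diagonal $\{U_{nn}\}_n$ via the blocks $\boldsymbol{\DIAG}^n$ from~\eqref{e:DIAGn}. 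Setting $L:=\mathbf{B}^{-1}$ (lower triangular) and subtracting the equation $\mathbf{B}\mathbf{G}\mathbf{B}^\T=(\ell_{ij})_{ij}$ satisfied by the coefficient matrix of $G^k$ in~\eqref{e:vfgk} yields
\begin{equation*}
	\mathbf{U} - \mathbf{G} = \rho^2\, L\, \mathbf{D}\, L^\T.
\end{equation*}

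I would then compute $L$ by forward substitution: $L_{nn}=1/b_{n-1,n}$ and $L_{n,j}=-\theta_n L_{n-1,j}$ for $j<n$, with $\theta_n$ as in~\eqref{e:thetan}. Taking the $(n,n)$-entry of the displayed identity and separating the contributions with $i=n$ or $j=n$ (kept sparse by the tridiagonality of $\mathbf{D}$), the remaining sum over $i,j\leq n-1$ collapses, via the recursion $L_{n,j}=-\theta_n L_{n-1,j}$ and the inductive identity at level $n-1$, into $\theta_n^2(U_{n-1,n-1}-G_{n-1,n-1})$. Substituting the values $\mathbf{D}_{n,n-1}=U_{n-1,n-1}\DIAG^{n-1}_{12}$ and $\mathbf{D}_{nn}=U_{n-1,n-1}\DIAG^{n-1}_{22}+U_{nn}\DIAG^n_{11}$, collecting the $U_{nn}$ term on the left (whose coefficient is $\beta_n^{-1}$, nonzero precisely by the finiteness assumption on $\beta_n$), and identifying the cross-term coefficient $2\rho^2 L_{nn}L_{n,n-1} = -2\rho^2 b_{n-1,n-1}/(b_{n-1,n}^2 b_{n-2,n-1})$ with the relevant factor in~\eqref{e:alphan} produces the scalar recursion
\begin{equation*}
	U_{nn} = \beta_n G_{nn} + \alpha_n U_{n-1,n-1} - \beta_n\theta_n^2 G_{n-1,n-1}, \qquad n\geq 2, \qquad U_{11}=\beta_1 G_{11}.
\end{equation*}

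Iterating this recursion by induction on $n$ delivers the stated formula~\eqref{e:Uknn:kn}. Uniqueness of $U^k$ follows from the same recursion applied with $\mathbf{G}\equiv 0$: it forces $W_{11}=0$ and $W_{nn}=\alpha_n W_{n-1,n-1}$, hence all $W_{nn}$ vanish, so $\mathbf{D}^W=0$ and $\mathbf{W}=\rho^2 L\mathbf{D}^W L^\T=0$. Existence is then obtained by constructing the diagonal via~\eqref{e:Uknn:kn} and setting $\mathbf{U}:=\mathbf{G}+\rho^2 L\mathbf{D} L^\T$. The main obstacle is the algebraic identification in the penultimate step: verifying that the combination $\theta_n^2 + \rho^2 L_{nn}^2 \DIAG^{n-1}_{22} + 2\rho^2 L_{nn}L_{n,n-1}\DIAG^{n-1}_{12}$ simplifies to $\alpha_n/\beta_n$. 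This boils down to recognizing that the ratio $L_{n,n-1}/L_{nn}^2$ contributes precisely the factor $b_{n-1,n-1}/b_{n-2,n-1}$ appearing in~\eqref{e:alphan}.
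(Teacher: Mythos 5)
Your proposal is correct and follows essentially the same route as the paper: both exploit the bidiagonal structure of the temporal stiffness matrix (your $L=\mathbf{B}^{-1}$ is the paper's explicit product $\Pi_j^n$ written as a matrix inverse) together with the locality of $\DIAG^k$ to reduce the problem to a scalar recursion $U_{nn}=\beta_n G_{nn}-\beta_n\theta_n^2G_{n-1,n-1}+\alpha_n U_{n-1,n-1}$ for the diagonal, solvable precisely when $\beta_n$ is finite, after which the off-diagonal entries and the closed form \eqref{e:Uknn:kn} follow by back-substitution and induction. The only difference is presentational (splitting off the last row/column of the quadratic form $L\mathbf{D}L^\T$ versus subtracting the paper's double-sum formulas for consecutive diagonal entries), and your algebraic identification of $\alpha_n/\beta_n$ checks out.
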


\begin{proof}
	By locality of the support of $e_n$ and $v_i$,
	the values
	$b_{in} = b(e_n, v_i)$
	are non-zero at most for $i \in \{n-1, n\}$.
	Therefore,
	the coefficients $\{w_n\}_n$
	of the solution $w\in E^k$ to
	the problem
	``$b(w, v) = f(v)$ for all $v \in F^k$''
	are obtained by recursion,
	\begin{equation*}
		b_{n-1,n} w_n
		= f(v_{n-1}) - b_{n-1,n-1} w_{n-1}
		= \sum_{j = 0}^{n-1} \Pi_j^{n-1} f(v_j)
		,
		\quad \text{where} \quad
		\Pi_j^n := \prod_{i = j+1}^{n} \frac{ -b_{ii} }{ b_{i-1,i} }.
	\end{equation*}
	Hence,
	the coefficients of the solution $G^k$
	to the tensorized problem~\eqref{e:vfgk}
	satisfy
	\[
		\label{e:Gmn}
		b_{m-1,m} b_{n-1,n} G_{mn} & = \sum_{i=0}^{m-1} \sum_{j=0}^{n-1} \Pi_{i}^{m-1} \Pi_{j}^{n-1} \ell_{ij}
		.
	\]
	Applying this formula to
	$B U = \ell + \rho^2 \DIAG^k U$
	instead of
	$B G = \ell$
	gives
	\[
		\label{e:Umn}
		b_{m-1,m} b_{n-1,n} U_{mn} & =
		b_{m-1,m} b_{n-1,n} G_{mn}
		+
		\rho^2 \sum_{i=0}^{m-1} \sum_{j=0}^{n-1} \Pi_{i}^{m-1} \Pi_{j}^{n-1} [\DIAG^k U^k]_{ij}
		.
	\]
	Due to the locality~\ref{enum:DeltaSparse}
	of $\DIAG^k$,
	the double sum
	contains only
	the diagonal
	coefficients $U_{r r}$ with $r \leq \min\{ m, n \}$
	and no off-diagonal ones;
	specifically,
	only the entries
	\begin{subequations}
		\label{e:[DIAG U]}
		\[
			[\DIAG^k U^k]_{r-1,r-1} & = U_{r-1,r-1} \DIAG^{r-1}_{22} + U_{rr} \DIAG^{r}_{11},
			\\
			[\DIAG^k U^k]_{r-2,r-1} & = U_{r-1,r-1} \DIAG^{r-1}_{12},
			\\
			[\DIAG^k U^k]_{r-1,r-2} & = U_{r-1,r-1} \DIAG^{r-1}_{21},
		\]
	\end{subequations}
	occur.
	In particular, if $m = n$ then
	the formula gives a recursion
	for $U_{n n}$
	with
	$\rho^2 \DIAG^n_{11} U_{n n}$
	on the right-hand side.
	Therefore,
	we can solve for $U_{n n}$
	if
	$b_{n-1,n}^2 \neq \rho^2 \DIAG^n_{11}$
	(which is equivalent to $\beta_n$ being finite).
	The formula then provides
	the remaining off-diagonal coefficients $U_{m n}$.
	With this,
	the existence of the discrete solution
	is established.

	To obtain the representation \eqref{e:Uknn:kn},
	we subtract
	from formula \eqref{e:Umn} for $U_{n n}$
	that for $U_{n-1, n-1}$.
	After some manipulation,
	this leads to the iteration
	\begin{equation*}
		U_{11} = \beta_1 G_{11},
		\qquad
		U_{nn} = \beta_n G_{nn} - \beta_n \theta_n^2 G_{n-1,n-1} + \alpha_n U_{n-1,n-1},
		\quad 2 \leq n \leq N
		,
	\end{equation*}
	and by induction to the claim \eqref{e:Uknn:kn}.
\end{proof}

Equation~\eqref{e:Uknn:kn}
is
the discrete version of the identity in~\eqref{e:ugrel},
which
was used to prove (see Theorem~\ref{t:stab-mul})
that
an SPSD right-hand side $\ell$
entails
the same property for the solution $U$.
The following lemma
characterizes
the conditions on the discretization parameters
for
which this is true in the discrete.

\begin{lemma}
	\label{l:Ukspsd}
	The following are equivalent:
	\begin{enumerate}[label={(\roman*)}]
		\item\label{l:Ukspsd:beta}
		$\beta_n > 0$ in \eqref{e:betan} for all $n$;

		\item\label{l:Ukspsd:spsdsol}
		For every SPSD $\ell \in F_\epsilon'$
		the discrete variational problem~\eqref{e:vfk} has
		a unique
		solution $U^k \in E^k_\pi$,
		and it is SPSD.
	\end{enumerate}
\end{lemma}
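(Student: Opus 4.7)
The plan is to prove the two implications separately, exploiting the diagonal formula~\eqref{e:Uknn:kn} in both directions.

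For (ii)$\Rightarrow$(i), the choice $\ell \equiv 0$ (trivially SPSD) forces $\cB^k$ to be injective on $E^k_\pi$; since the space is finite-dimensional, inspecting the recursion in the proof of Lemma~\ref{l:Uknn} yields finiteness of every $\beta_n$. For each fixed $n$, take $w_n := e_n \otimes e_n \in E^k_\pi$, which is SPSD, and the associated functional $\ell := B w_n \in F_\epsilon'$, which is SPSD on $F_\epsilon$ because $\ell(v\otimes v) = b(e_n,v)^2$. Then $G^k = B^{-1}\ell = w_n$ gives $G_{jj} = \krondelta_{jn}$, and formula~\eqref{e:Uknn:kn} collapses to $U_{nn} = \beta_n$. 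The SPSD property of $U^k$ guaranteed by (ii) forces $U_{nn}\geq 0$, and together with $\beta_n \neq 0$ this delivers $\beta_n > 0$.

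For (i)$\Rightarrow$(ii), Lemma~\ref{l:Uknn} already provides existence and uniqueness of $U^k$, so only the SPSD property remains. I would proceed in three steps. Step~1 (algebraic): under $\beta_n > 0$ for all $n$, I claim $\alpha_n \geq 0$ and $\beta_m\alpha_{m+1} - \beta_{m+1}\theta_{m+1}^2 \geq 0$. The matrices $\boldsymbol{\DIAG}^n$ are positive semi-definite by Assumption~\ref{enum:DeltaSymm}, so $c^2 \DIAG^{n-1}_{11} - 2c\DIAG^{n-1}_{12} + \DIAG^{n-1}_{22} \geq 0$ for any $c$. Substituting the definition~\eqref{e:alphan} with $c := b_{n-2,n-1}^{-1}b_{n-1,n-1}$ and using the identity $c^2 b_{n-1,n}^{-2} = \theta_n^2 b_{n-2,n-1}^{-2}$ collapses $\alpha_n$ to $\alpha_n \geq \beta_n\theta_n^2\beta_{n-1}^{-1} \geq 0$. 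The second inequality follows analogously by substituting $\beta_m - 1 = \beta_m\rho^2 b_{m-1,m}^{-2}\DIAG^m_{11}$, which re-expresses the difference as the non-negative multiple $\beta_m\beta_{m+1}\rho^2 b_{m,m+1}^{-2}\,(c',-1)\,\boldsymbol{\DIAG}^m\,(c',-1)^\T$ of a quadratic form, with $c' := b_{m-1,m}^{-1}b_{m,m}$. Step~2 (diagonal positivity): SPSD of $\ell$ yields SPSD of $G^k$ via Lemma~\ref{l:discrete-psd}, so $G_{mm} \geq 0$, and formula~\eqref{e:Uknn:kn} represents $U_{nn}$ as a non-negative combination of non-negative terms, hence $U_{nn} \geq 0$. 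Step~3 (matrix SPSD): the auxiliary element $\tilde U := \sum_n U_{nn}\,e_n\otimes e_n \in E^k_\pi$ is SPSD (its coefficient matrix is diagonal with non-negative entries), and by locality~\ref{enum:DeltaSparse} the trace product depends only on the diagonal coefficients, so $\DIAG^k U^k = \DIAG^k \tilde U$, which is SPSD on $F^k \otimes F^k$ by Assumption~\ref{enum:DeltaSymm}. Therefore $\ell + \rho^2 \DIAG^k U^k$ is SPSD on $F^k \otimes F^k$, and Lemma~\ref{l:discrete-psd} applied to the identity $B U^k = \ell + \rho^2 \DIAG^k U^k$ delivers SPSD of $U^k$.

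The principal obstacle is the algebraic Step~1, namely turning the single-entry sign conditions $\beta_n > 0$ into the compound non-negativities needed by the combination formula~\eqref{e:Uknn:kn}. The key is to identify the awkward coefficients of~\eqref{e:Uknn:kn} as quadratic forms in $\boldsymbol{\DIAG}^n$, after which the $2\times 2$ Cauchy--Schwarz inequality implied by Assumption~\ref{enum:DeltaSymm} closes the estimates. The rest is structural and mirrors the continuous reduction to the diagonal $t\mapsto U(t,t)$ used in the proof of Theorem~\ref{t:stab-mul}: locality~\ref{enum:DeltaSparse} reduces the SPSD question for the full matrix $\mathbf{U}$ to scalar positivity of its diagonal, Assumption~\ref{enum:DeltaSymm} promotes this back to matrix SPSD, and Lemma~\ref{l:discrete-psd} yields the final conclusion.
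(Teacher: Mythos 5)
Your proof is correct and follows essentially the same route as the paper: the reverse implication tests with $\ell = B(e_n\otimes e_n)$ to read off $U_{nn}=\beta_n\geq 0$ from \eqref{e:Uknn:kn}, and the forward implication combines the positive-semi-definiteness of $\boldsymbol{\DIAG}^n$ (your quadratic-form identities in Step~1 are exactly the paper's equivalence \eqref{e:alpha-theta-rel}) with locality of $\DIAG^k$ to reduce matters to the diagonal and then invoke Lemma~\ref{l:discrete-psd}. Your explicit treatment of the finiteness of $\beta_n$ via $\ell\equiv 0$ and the slightly reordered Step~3 are only cosmetic differences.
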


\begin{proof}
	Assume~\ref{l:Ukspsd:beta}.
	Let $\ell \in F_\epsilon'$ be SPSD.
	Then $G^k \in E^k_{\pi}$
	defined in~\eqref{e:vfgk}
	is also SPSD
	by Lemma~\ref{l:discrete-psd}.
	As remarked above, its matrix of coefficients
	is therefore also SPSD,
	in particular $G_{nn} \geq 0$.
	From this and \eqref{e:Uknn:kn},
	it follows that also $U_{nn} \geq 0$.
	Indeed,
	with \ref{l:Ukspsd:beta} $\beta_n > 0$, we obtain the equivalence
	\[
		\label{e:alpha-theta-rel}
		\beta_{n+1}^{-1} \alpha_{n+1} \geq \beta_{n}^{-1} \theta_{n+1}^2
		\quad
		\Leftrightarrow
		\quad
		(-b_{n-1,n}^{-1} b_{nn}, 1) \boldsymbol{\DIAG}^n (-b_{n-1,n}^{-1} b_{nn}, 1)^\T
		\geq 0
		.
	\]
	Since the matrices $\boldsymbol{\DIAG}^n$ are positive semi-definite,
	$\beta_{n} \alpha_{n+1} \geq  \beta_{n+1} \theta_{n+1}^2$ holds
	and, thus, $\alpha_{n+1} \geq 0$
	and $U_{nn} \geq 0$
	for all $n$.
	Set now $\widehat{U}^k := \sum_{n=1}^N U_{nn} (e_n\otimes e_n)$.
	Since the discrete inf-sup condition~\eqref{e:ga-EF} is assumed,
	there exists a unique $U^k \in E^k_\pi$ satisfying
	$B(U^k,v) = \widehat{\ell}(v)$
	for all $v \in F^k_\epsilon$,
	where
	$\widehat{\ell} := \rho^2 \DIAG^k \widehat{U}^k + \ell$.
	By Assumption~\ref{enum:DeltaSymm} on $\DIAG^k$,
	the functional~$\widehat{\ell}$
	is SPSD
	on $F^k\otimes F^k$.
	%
	%
	By Lemma~\ref{l:discrete-psd},
	$U^k$ is also SPSD.
	Moreover, the identity \eqref{e:Gmn}
	applied
	to the right-hand side $\widehat{\ell}$
	yields
	$ 
	b_{n-1,n}^2 U_{nn}
	= \sum_{i,j < n} \Pi_{i}^{n-1} \Pi_{j}^{n-1} [\rho^2 \DIAG^k \widehat{U}^k + \ell]_{ij}
	=
	b_{n-1,n}^2 \widehat{U}_{nn},
	$ 
	where the last equality follows from the definition of the
	coefficients $\widehat{U}_{nn} = U_{nn}$ and
	the locality properties \eqref{e:[DIAG U]}.
	Consequently, $\DIAG^k \widehat{U}^k = \DIAG^k U^k$ on $F^k_\epsilon$,
	and $U^k$ is the desired solution.

	Conversely, assume~\ref{l:Ukspsd:spsdsol}.
	For any $g_1, \ldots, g_N \geq 0$,
	the function $G^k := \sum_n g_n (e_n\otimes e_n) \in E_\pi^k \subset E_\pi$
	is SPSD.
	By Lemma~\ref{l:psd}, the functional
	{$
		\ell := B G^k
		\in
		F_\epsilon'
	$}
	inherits this property
	and, moreover, by assumption also the solution
	$U^k$ to~\eqref{e:vfk} is
	positive semi-definite.
	In particular, $U_{nn} \geq 0$.
	Fix $n\in\{1,\dots,N\}$ and choose $g_n = 1$ and $g_m = 0$ for all $m\neq n$.
	With this choice, the nonnegativity of $U_{nn}$
	along with its representation in~\eqref{e:Uknn:kn}
	imply that $\beta_n \geq 0$.
	Since $\beta_n$ is a fraction~\eqref{e:betan}, we
	conclude that \ref{l:Ukspsd:beta} $\beta_1,\ldots,\beta_N$ are positive.
\end{proof}

\subsubsection{Discrete stability and inf-sup}

The representation of $U_{nn}$ in~\eqref{e:Uknn:kn}
in combination with the Lemmas~\ref{l:2andpinorm} and~\ref{l:Ukspsd}
allow for an explicit representation of the
$E_\pi$-norm of the discrete solution:

\begin{corollary}
	\label{c:Ukpinorm}
	Suppose $\beta_n > 0$ in \eqref{e:betan} for all $n$.
	Let $\ell \in F_\epsilon'$ be SPSD.
	Then
	the discrete variational problem~\eqref{e:vfk}
	admits a unique solution $U^k \in E^k_\pi$.
	It is SPSD
	with norm
	\[
		\label{e:Ukpinorm:kn}
		\norm{U^k}{\pi} =
		\sum_{n=1}^{N} \left( \beta_n G_{nn}
		+ \sum_{m=1}^{n-1} G_{mm} (\beta_m \alpha_{m+1} - \beta_{m+1} \theta_{m+1}^2 )
		\prod_{\nu=m+2}^{n} \alpha_\nu \right)
		\norm{e_n}{E}^2.
	\]
\end{corollary}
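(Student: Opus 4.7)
The plan is to chain together the three earlier results that do all the real work: Lemma~\ref{l:Ukspsd} for existence/uniqueness/SPSD-ness of $U^k$, Lemma~\ref{l:2andpinorm} to reduce the projective norm of an SPSD element to its trace, and Lemma~\ref{l:Uknn} for the explicit formula of the diagonal coefficients $U_{nn}$. Essentially nothing new has to be proved; the corollary is the assembly of these three ingredients.

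First, under the standing assumption $\beta_n>0$ for all $n$, Lemma~\ref{l:Ukspsd} (in the direction \ref{l:Ukspsd:beta}$\Rightarrow$\ref{l:Ukspsd:spsdsol}) immediately gives existence and uniqueness of the discrete solution $U^k\in E_\pi^k$ together with the SPSD property. Since $U^k$ is in particular an SPSD element of $E_\pi$, Lemma~\ref{l:2andpinorm} applies and yields
\begin{equation*}
\norm{U^k}{\pi}=\lambda\,\diag(U^k).
\end{equation*}

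Next, I would compute $\diag(U^k)$ via the expansion~\eqref{e:Ukexpansion}. The key observation is that the basis functions $\{e_n\}_n$ have the localized supports $\operatorname{supp}(e_n)=[t_{n-1},t_n]$ (this holds for both the $\textrm{CN}^\star$ and the $\textrm{iE}^\star$ bases used in this section), so the pointwise products satisfy $e_m(t)e_n(t)=0$ for all $t\in J$ whenever $m\neq n$. Therefore only the diagonal coefficients survive in the trace integral:
\begin{equation*}
\diag(U^k)=\int_J U^k(t,t)\rd t
=\sum_{m,n}U_{mn}\int_J e_m(t)e_n(t)\rd t
=\sum_{n=1}^N U_{nn}\,\norm{e_n}{L_2(J)}^2
=\lambda^{-1}\sum_{n=1}^N U_{nn}\,\norm{e_n}{E}^2,
\end{equation*}
using the definition~\eqref{e:EF-norms} of $\norm{\CDOT}{E}$ in the last equality. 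Combining with the previous display gives the clean identity $\norm{U^k}{\pi}=\sum_{n=1}^N U_{nn}\norm{e_n}{E}^2$.

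Finally, I would substitute the explicit formula~\eqref{e:Uknn:kn} of Lemma~\ref{l:Uknn} for $U_{nn}$ into this sum. This is a direct substitution and reproduces~\eqref{e:Ukpinorm:kn} verbatim. There is no real obstacle: the only nontrivial sanity check is that the off-diagonal coefficients $U_{mn}$ with $m\neq n$ genuinely drop out of the norm computation, which is guaranteed by the disjointness of supports of the trial basis functions. Note also, for book-keeping, that the combinations appearing in~\eqref{e:Ukpinorm:kn} are well-scaled with respect to the chosen normalization of $\{e_n\}_n$: by~\eqref{e:abt-scale}, the factors $\beta_m\alpha_{m+1}-\beta_{m+1}\theta_{m+1}^2$ and $\prod_{\nu=m+2}^n\alpha_\nu$ carry precisely the scaling needed to balance $\norm{e_n}{E}^2/\norm{e_m}{E}^2$ against $G_{mm}\norm{e_m}{E}^2$, so the formula is basis-independent as expected.
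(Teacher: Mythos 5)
Your proposal is correct and follows exactly the paper's own argument: Lemma~\ref{l:Ukspsd} for existence, uniqueness and SPSD-ness, Lemma~\ref{l:2andpinorm} to write $\norm{U^k}{\pi}=\lambda\diag(U^k)=\sum_n U_{nn}\norm{e_n}{E}^2$, and substitution of the formula from Lemma~\ref{l:Uknn}. The extra detail you supply (disjoint supports killing the off-diagonal terms, and the scaling check via~\eqref{e:abt-scale}) is a correct elaboration of what the paper leaves implicit.
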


\begin{proof}
	Lemmas~\ref{l:2andpinorm},~\ref{l:Uknn}
	and~\ref{l:Ukspsd}
	give
	$
	\norm{U^k}{\pi} = \lambda \diag(U^k)
	= \sum_{n=1}^{N} U_{nn} \norm{e_n}{E}^2.
	$
	Inserting the expression~\eqref{e:Uknn:kn} for $U_{nn}$
	yields~\eqref{e:Ukpinorm:kn}.
\end{proof}

From Corollary~\ref{c:Ukpinorm},
the norm of the discrete solution $U^k$ can be
estimated in terms of the norm of
the right-hand side $\ell$.
We shall do this under the additional assumption
of a uniform temporal mesh.
For convenience of notation,
we rescale the basis $\{e_n\}_n$ to $\norm{e_n}{E} = 1$,
so that in view of \eqref{e:abt-scale},
the numbers
$(\alpha, \beta, \theta) := (\alpha_n, \beta_n, \theta_n)$
do not depend on $n$
(cf.~Table \ref{t:stab1}).

\begin{theorem}
	\label{t:stab:dis}
	In addition
	to the conditions posed in Corollary~\ref{c:Ukpinorm},
	assume that the temporal mesh is uniform.
	Then the discrete solution $U^k$ to~\eqref{e:vfk}
	satisfies the stability bound
	\[
		\label{e:stab>1}
		\norm{U^k}{\pi}
		\leq
		C_k
		\norm{\ell}{-\epsilon}
		\quad
		\text{with}
		\quad
		C_k
		:=
		\gamma_k^{-2}
		\beta
		(
		1 + (\alpha -  \theta^2 ) \tfrac{\alpha^{N-1} - 1}{\alpha - 1}
		)
		,
	\]
	where
	$\gamma_k$ is the discrete inf-sup constant from~\eqref{e:ga-EF}.
	If $\alpha = 1$ then
	$C_k = \gamma_k^{-2} \beta ( \theta^2 + N (1 - \theta^2 ) )$.
\end{theorem}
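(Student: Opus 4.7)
My plan is to leverage the explicit diagonal formula in Corollary~\ref{c:Ukpinorm} and reduce the stability bound for $\cB^k$ to the inf-sup bound for the pure $B$-problem \eqref{e:vfgk}, whose norm is controlled by $\gamma_k^{-2}$ via \eqref{e:ga2}.

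First, by Lemma~\ref{l:2andpinorm} and Lemma~\ref{l:discrete-psd}, since $\ell$ is SPSD the auxiliary solution $G^k \in E^k_\pi$ defined in \eqref{e:vfgk} is SPSD. Hence its diagonal coefficients satisfy $G_{mm} \geq 0$ (the coefficient matrix of an SPSD element in $E^k \otimes E^k$ is itself positive semi-definite, as noted just before \eqref{e:DIAGn}), and moreover
\[
\textstyle
\sum_{m=1}^N G_{mm} \norm{e_m}{E}^2 = \lambda \diag(G^k) = \norm{G^k}{\pi}.
\]
The inf-sup estimate \eqref{e:ga2} for $B \from E^k_\pi \to (F^k_\epsilon)'$ then yields $\norm{G^k}{\pi} \leq \gamma_k^{-2} \norm{\ell}{-\epsilon}$.

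Next, under the uniform-mesh assumption, the scaling relations \eqref{e:abt-scale} show that after rescaling the basis to $\norm{e_n}{E} = 1$ the numbers $\alpha_n, \beta_n, \theta_n$ all become independent of $n$; call them $\alpha, \beta, \theta$. Formula \eqref{e:Ukpinorm:kn} then reads
\[
\textstyle
\norm{U^k}{\pi}
= \beta \sum_{m=1}^N G_{mm}
\bigl( 1 + (\alpha - \theta^2) \sum_{n = m+1}^N \alpha^{n-m-1} \bigr).
\]
For $\alpha \neq 1$ the inner geometric sum equals $S_m := (\alpha^{N-m} - 1)/(\alpha - 1)$; for $\alpha = 1$ it equals $S_m := N - m$. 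In either case $S_m$ is nonnegative and decreasing in $m$, so $S_m \leq S_1$. Combined with the crucial sign statement $\alpha - \theta^2 \geq 0$ (which follows from \eqref{e:alpha-theta-rel} used already in Lemma~\ref{l:Ukspsd}, since $\boldsymbol{\DIAG}^n$ is positive semi-definite and $\beta > 0$), and with $G_{mm} \geq 0$, this bounds each factor by the factor at $m=1$, yielding
\[
\textstyle
\norm{U^k}{\pi}
\leq
\beta \bigl( 1 + (\alpha - \theta^2) S_1 \bigr) \sum_{m=1}^N G_{mm}
\leq
\gamma_k^{-2} \beta \bigl( 1 + (\alpha - \theta^2) S_1 \bigr) \norm{\ell}{-\epsilon}.
\]
Substituting $S_1 = (\alpha^{N-1}-1)/(\alpha-1)$ gives $C_k$ in \eqref{e:stab>1}; substituting $S_1 = N-1$ in the limit case gives $\beta(1 + (1-\theta^2)(N-1)) = \beta(\theta^2 + N(1-\theta^2))$, matching the stated formula.

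The main technical point is the monotonicity argument justifying $S_m \leq S_1$ together with the nonnegativity of $\alpha - \theta^2$; these are what allow one to pull the mesh-dependent amplification factor out of the sum without spoiling the bound. All remaining ingredients — the SPSD-preservation of the Galerkin scheme, the identification of $\norm{\CDOT}{\pi}$ with $\lambda \diag(\CDOT)$ on SPSD elements, and the tensorized inf-sup constant $\gamma_k^2$ — are already in place from the preceding subsections.
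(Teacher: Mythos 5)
Your proposal is correct and follows essentially the same route as the paper's proof: both rewrite the diagonal formula from Corollary~\ref{c:Ukpinorm} as a sum of geometric series, use $\alpha \geq \theta^2 \geq 0$ (via~\eqref{e:alpha-theta-rel}) together with $G_{mm} \geq 0$ to bound each amplification factor by its value at $m=1$, and then conclude with $\sum_m G_{mm} = \norm{G^k}{\pi} \leq \gamma_k^{-2}\norm{\ell}{-\epsilon}$. The only cosmetic difference is that the paper first interchanges the order of summation before estimating, which leads to the identical bound.
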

\begin{proof}
	Corollary~\ref{c:Ukpinorm} yields
	\[
		\label{e:stabproof1}
		\norm{U^k}{\pi} & =
		\beta \sum_{n=1}^N G_{nn}
		+
		\beta(\alpha- \theta^2 ) \sum_{m=1}^{N-1}
		G_{mm}
		\sum_{n = 0}^{N-m-1} \alpha^n
		,
	\]
	where we have changed the order of summation.

	It follows from the
	observations in~\eqref{e:alpha-theta-rel} that
	$\alpha \geq \theta^2 \geq 0$.
	Thus, if $\alpha \neq 1$ we have
	$
	\tfrac{1-\alpha^{N-n}}{1 - \alpha}
	\leq
	\tfrac{1-\alpha^{N-1}}{1 - \alpha}
	$
	and
	evaluating the geometric sum in \eqref{e:stabproof1}
	yields
	\begin{equation*}
		\norm{U^k}{\pi}
		=
		\beta \sum_{n=1}^{N} (1 + (\alpha - \theta^2 ) \tfrac{1-\alpha^{N-n}}{1 - \alpha}) G_{nn}
		\leq
		\beta( 1 + (\alpha - \theta^2 ) \tfrac{1-\alpha^{N-1}}{1 - \alpha} )
		\norm{G^k}{\pi}
		\leq
		C_k \norm{\ell}{-\epsilon}.
	\end{equation*}
	For $\alpha = 1$, the claim follows directly from \eqref{e:stabproof1}.
\end{proof}

As a consequence of the
the stability bound in the previous theorem
we obtain
an inf-sup condition
for $\cB^k = B - \rho^2 \DIAG^k$.
It is convenient
to formulate it
on
the subspaces
$\widehat{E}^k_\pi \subset E^k_\pi$ and
$\widehat{F}^k_\epsilon \subset F^k_\epsilon$
of symmetric functions.

\begin{corollary}
	\label{cor:cBinfsup}
	Suppose the temporal mesh is uniform with $\beta > 0$.
	Then $\cB^k$ in~\eqref{e:vfk}
	satisfies the discrete inf-sup condition
	(note the symmetrization)
	\[
		\label{e:cBinfsup}
		\inf_{w\in S(\widehat{E}^k_\pi)}
		\sup_{v\in S(\widehat{F}^k_\epsilon)}
		\cB^k(w,v)
		\geq
		C_k^{-1}
		,
	\]
	where $C_k$ is the discrete stability constant in~\eqref{e:stab>1}.
\end{corollary}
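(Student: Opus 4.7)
The strategy is to recast the inf-sup estimate as the equivalent stability bound
\[
\norm{w}{\pi} \leq C_k \norm{\cB^k w}{(\widehat{F}^k_\epsilon)'}
\qquad \forall w \in \widehat{E}^k_\pi,
\]
using the identity $\sup_{v \in S(\widehat{F}^k_\epsilon)} \cB^k(w,v) = \norm{\cB^k w}{(\widehat{F}^k_\epsilon)'}$, valid for any bilinear form on a symmetric subspace (replace $v$ by $-v$). I would then reduce this bound to Theorem~\ref{t:stab:dis} via a signed SPSD decomposition of the right-hand side $\ell := \cB^k w$.

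Fix a symmetric $w \in \widehat{E}^k_\pi$ and set $\ell := \cB^k w$. Since $B = b \otimes b$ maps symmetric inputs to symmetric functionals, and $\DIAG^k$ preserves symmetry by Assumption~\ref{enum:DeltaSymm}, the functional $\ell$ is symmetric on $F^k \otimes F^k$. A symmetrization argument (the injective norm is invariant under the swap $v \mapsto v^\T$, so $(v + v^\T)/2$ has injective norm at most $\norm{v}{\epsilon}$ and leaves $\ell$ invariant) shows $\norm{\ell}{(\widehat{F}^k_\epsilon)'} = \norm{\ell}{(F^k_\epsilon)'}$, and Hahn--Banach (followed by averaging with the transposed extension to restore symmetry) produces a norm-preserving symmetric extension $\tilde\ell \in F_\epsilon'$. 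Applying the decomposition~\eqref{e:lpm} to $\tilde\ell$, the antisymmetric component vanishes, yielding $\tilde\ell = \tilde\ell^+ - \tilde\ell^-$ with SPSD parts and exact norm additivity $\norm{\tilde\ell^+}{-\epsilon} + \norm{\tilde\ell^-}{-\epsilon} = \norm{\tilde\ell}{-\epsilon}$.

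Theorem~\ref{t:stab:dis} applies to each $\tilde\ell^\pm$ separately (the hypotheses $\beta > 0$ and uniform mesh carry over) and yields unique SPSD solutions $U^\pm \in E^k_\pi$ of the discrete problem~\eqref{e:vfk} with $\norm{U^\pm}{\pi} \leq C_k \norm{\tilde\ell^\pm}{-\epsilon}$. By linearity of $\cB^k$ and the unique solvability established in Lemma~\ref{l:Uknn}, necessarily $w = U^+ - U^-$, and the triangle inequality closes the argument,
\[
\norm{w}{\pi} \leq \norm{U^+}{\pi} + \norm{U^-}{\pi} \leq C_k \bigl(\norm{\tilde\ell^+}{-\epsilon} + \norm{\tilde\ell^-}{-\epsilon}\bigr) = C_k \norm{\ell}{(\widehat{F}^k_\epsilon)'},
\]
which is the desired stability bound. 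The only mildly subtle point is the transfer of the $\pm$ decomposition~\eqref{e:lpm} from the continuum $F_\epsilon'$ to the discrete setting in a symmetry- and norm-preserving fashion; this is dispatched by Hahn--Banach together with the transposed-extension averaging trick, and everything else is a routine concatenation of Theorem~\ref{t:stab:dis} with the additivity of the signed decomposition.
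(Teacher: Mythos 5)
Your proposal is correct and follows essentially the same route as the paper's proof: extend $\ell := \cB^k w$ from the symmetric discrete test space to $F_\epsilon$ by Hahn--Banach, decompose via \eqref{e:lpm} with vanishing antisymmetric part, apply the stability bound of Theorem~\ref{t:stab:dis} to the SPSD pieces $\ell^\pm$, and conclude by the triangle inequality and the norm additivity in \eqref{e:lpm}. The only difference is that you make explicit the symmetry-preserving choice of extension (averaging with the transposed extension), a detail the paper leaves implicit but which is indeed needed so that $w=U^+-U^-$ and $\ell^a=0$ hold.
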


\begin{proof}
	Fix a symmetric $w \in \widehat{E}^k_\pi$.
	On $\widehat{F}_\epsilon^k$
	define the functional
	$\ell := \cB^k w$,
	extending
	it
	via Hahn--Banach
	with equal norm to $F_\epsilon$.
	Decompose it as
	$\ell =: \ell^+ - \ell^- + \ell^a$
	as in \eqref{e:lpm}.
	Then $\ell^a = 0$ by symmetry of $w$.
	Let $w^\pm \in \widehat{E}^k_\pi$ be the solution
	to~\eqref{e:vfk} with the right-hand side $\ell^\pm$.
	Clearly, $w = w^+ - w^-$.
	Therefore,
	\begin{equation*}
		\norm{w}{\pi}
		\leq
		\norm{w^+}{\pi} + \norm{w^-}{\pi}
		\stackrel{ \eqref{e:stab>1} }{ \leq }
		C_k ( \norm{\ell^+}{-\epsilon} + \norm{\ell^-}{-\epsilon} )
		\stackrel{ \eqref{e:lpm} }{ = }
		C_k
		\norm{\ell}{-\epsilon}
		.
	\end{equation*}
	Since $w \in \widehat{E}^k_\pi$ was arbitrary
	and
	$\norm{\ell}{-\epsilon} = \sup_{v \in S(\widehat{F}_\epsilon^k)} \cB^k(w, v)$,
	the conclusion \eqref{e:cBinfsup} follows.
\end{proof}

Now we introduce
some approximations $\DIAG^k$ of
the trace product $\DIAG$.
This is of interest
primarily
for
the $\text{iE}^{\star}$ discretization.
%
%
%
%
The schemes we consider are
\begin{enumerate}
	\item[{$\text{CN}^{\star}_2$}:]
	
		the $\text{CN}^\star$ discretization discussed in \S\ref{s:dis:1:CN*}
		with the exact trace product $\DIAG^k := \DIAG$.
		
	\item[{$\text{iE}^{\star}_2$}:]
	
		The $\text{iE}^\star$ discretization introduced in \S\ref{s:dis:1:iE*}
		with the exact trace product
		$\DIAG^k := \DIAG$.

	\item[{$\text{iE}^{\star}_2/Q$}:]

		$\text{iE}^\star$
		with preprocessing:
		$\DIAG^k := \DIAG \circ (q_k \otimes q_k)$
		with $q_k$ from~\eqref{e:Q}.

	\item[{$\text{iE}^{\star}_2/\boxrule$}:]

		$\text{iE}^\star$
		with the ``box rule''
		\[
			\label{e:DIAGk:box}
			\DIAG^k(w,v) := \sum_{n=1}^{N} k_n^{-1} \int_{J_n \times J_n} w(s, t) v(s, t) \rd s \rd t,
			\quad
			(w, v) \in E^k_\pi \times F^k_\epsilon
			.
		\]
		This definition is motivated
		by observing
		that $\DIAG(w, v)$ is the double integral of
		$\Dirac(s - t) w(s, t) v(s, t)$
		over all ``boxes'' $J_n \times J_n$
		and
		approximating
		$\Dirac(s - t)$ by $k_n^{-1}$
		on $J_n \times J_n$.
\end{enumerate}

All these candidates for
the approximate trace product $\DIAG^k$
satisfy the assumptions
\ref{enum:DeltaSymm}--\ref{enum:DeltaBdd}
made above.
In particular, they are bilinear and continuous,
as quantified in the following lemma.

\begin{lemma}
	\label{l:|DIAGk|}
	Each of the above $\DIAG^k$ is
	bounded on $E_\pi \times F_\epsilon$
	with
	\begin{equation*}
		%
		\DIAG^k(w,v) \leq \tfrac{1}{2\lambda} \norm{w}{\pi} \norm{v}{\epsilon}
		\quad
		\forall
		(w, v) \in E_\pi \times F_\epsilon
		.
	\end{equation*}
\end{lemma}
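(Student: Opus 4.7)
The plan is to treat the four schemes case by case. For $\text{CN}^{\star}_2$ and $\text{iE}^{\star}_2$, the approximate trace product coincides with the exact one, $\DIAG^k = \DIAG$, and so the claim is precisely Lemma~\ref{l:Delta}.

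For $\text{iE}^{\star}_2/Q$ I would exploit that $q_k$ is the $L_2$-orthogonal projection onto $\partial_t F^k$ and therefore contractive with respect to $\norm{\CDOT}{L_2(J)}$; since the $E$-norm defined in \eqref{e:EF-norms} is a constant multiple of the $L_2(J)$-norm, $q_k$ is also a contraction on $E$. By the universal property of the projective tensor norm (the projective tensor product of two contractions is again a contraction), $q_k \otimes q_k$ extends to a contraction on $E_\pi$, so $\norm{(q_k \otimes q_k) w}{\pi} \leq \norm{w}{\pi}$ for every $w \in E_\pi$. Composing this inequality with Lemma~\ref{l:Delta} applied to $(q_k \otimes q_k) w$ and $v$ yields the desired bound.

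The only nontrivial case is $\text{iE}^{\star}_2/\boxrule$. Here I would mimic the proof of Lemma~\ref{l:Delta}. By density and \cite[Thm.~2.4]{Schatten1950} it suffices to estimate $|\DIAG^k(w,v)|$ for a simple tensor $w = w_1 \otimes w_2$ with $w_1, w_2 \in E$ and $v \in F \otimes F$. The pointwise bound $|v(s,t)| \leq \tfrac12 \norm{v}{\epsilon}$ from \eqref{e:vtt} pulls out the $\norm{v}{\epsilon}$ factor from \eqref{e:DIAGk:box} and leaves the expression
\begin{equation*}
	\tfrac12 \norm{v}{\epsilon} \sum_{n=1}^N k_n^{-1} \int_{J_n} |w_1(s)| \rd s \int_{J_n} |w_2(t)| \rd t.
\end{equation*}
An elementwise Cauchy--Schwarz inequality gives $\int_{J_n} |w_j| \leq \sqrt{k_n} \norm{w_j}{L_2(J_n)}$, which precisely cancels the $k_n^{-1}$ weight; a second Cauchy--Schwarz in the index $n$ then reassembles the global $L_2$-norms, producing $\norm{w_1}{L_2(J)} \norm{w_2}{L_2(J)}$. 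In view of \eqref{e:simple} and \eqref{e:EF-norms}, this equals $\lambda^{-1} \norm{w_1 \otimes w_2}{\pi}$, and the claimed bound follows.

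There is no real obstacle: the first two cases reduce directly to Lemma~\ref{l:Delta}, the third is a routine contractivity argument on projective tensor products, and the last is a bookkeeping exercise with two Cauchy--Schwarz inequalities. The only thing to be careful about is how $q_k \otimes q_k$ is meant to act in $\text{iE}^{\star}_2/Q$, but under the natural reading (preprocessing the trial argument $w \in E_\pi$) the argument goes through verbatim.
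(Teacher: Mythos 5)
Your proof is correct and follows essentially the same route as the paper: the first two schemes reduce to Lemma~\ref{l:Delta}, the preprocessed scheme uses that $q_k\otimes q_k$ is a contraction on $E_\pi$, and the box rule is handled by reducing to a simple tensor, applying the pointwise bound \eqref{e:vtt}, and using Cauchy--Schwarz elementwise and then over the elements. No gaps; your reading of $\DIAG\circ(q_k\otimes q_k)$ as acting on the trial argument is the intended one.
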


\begin{proof}
	Boundedness of the exact trace product
	is
	the subject of
	Lemma~\ref{l:Delta}.
	For the approximation with preprocessing
	$\DIAG^k := \DIAG \circ (q_k \otimes q_k)$
	we
	have the same bound,
	because
	$\norm{ q_k \from E \to E^k }{} = 1$
	and
	therefore
	$\norm{ (q_k \otimes q_k) \from E_\pi \to E^k_\pi }{} = 1$.

	Now consider
	the ``box rule''
	$\DIAG^k$ as in
	$\eqref{e:DIAGk:box}$.
	Let $(w, v) \in E_\pi \times F_\epsilon$.
	By \cite[Thm.~2.4]{Schatten1950}
	we may assume that $w = w^1 \otimes w^2$.
	Employing
	$|v(s, t)| \leq \tfrac{1}{2} \norm{v}{\epsilon}$
	from
	\eqref{e:vtt}
	in
	\eqref{e:DIAGk:box}
	results in the estimate
	$ 
	\DIAG^k(w,v)
	\leq
	\tfrac{1}{2}
	\norm{v}{\epsilon}
	\sum_n
	\norm{w_1}{L_2(J_n)}
	\norm{w_2}{L_2(J_n)}
	\leq
	\tfrac{1}{2 \lambda}
	\norm{v}{\epsilon}
	\norm{w_1}{E}
	\norm{w_2}{E}
	.
	$ 
\end{proof}

The values of~$\DIAG^n$, $\alpha$, $\beta$ and $\theta$
for each scheme
are given in Table~\ref{t:stab1} below
in terms of the time-step size
$k > 0$
(assumed uniform)
and
the dimensionless numbers
$z := \lambda k$ and $q := \rho^2 / (2\lambda)$.
Recall that
the basis $\{ e_n \}_n \subset E^k$
is normalized to $\norm{e_n}{E} = 1$
to define these values.
The denominator of
$\beta_n = \lambda k_n b_{n-1,n}^2 / D_n$
is $D_n = \lambda k_n (b_{n-1,n}^2 - \rho^2 \DIAG^n_{11})$.
Thus, $D_n > 0$
necessary and sufficient
for
$\beta_n > 0$
in Lemma \ref{l:Ukspsd}.
On a uniform mesh we write $D := D_n$.
We remark that $D > 0$
holds for all our schemes
if
the temporal mesh width $k$ is sufficiently small,
namely
when $k \rho^2 \lesssim 1$.

\begin{table}[htb]
	\centering
	\begin{tabular}{c||c|c|c|c|c|}
		Scheme & $\lambda \DIAG^n$ & $D$ & $\alpha - 1$ & $\beta$ & $\theta$
		\\
		\hline
		$\text{CN}^{\star}_2$
		& $\tfrac{1}{6} \left(\begin{smallmatrix}
		2 & 1 \\
		1 & 2
		\end{smallmatrix}\right)$
		& $\left(1 + z/2\right)^2 - \tfrac{2}{3} q z$
		& $\tfrac{(2/3)(2+\sqrt{\theta})q z - 2z}{D}$
		& $\tfrac{\left(1+ z/2 \right)^2}{D}$
		& $\tfrac{z/2-1}{z/2+1}$
		\\
		\hline
		$\text{iE}^{\star}_2$
		& $\tfrac{1}{60} \left(\begin{smallmatrix}
		38 & 7 \\
		7 & 8
		\end{smallmatrix}\right)$
		& $\tfrac{1}{4} (1+z)^2 - \tfrac{19}{15} qz$
		& $\tfrac{(4/15)(23 + 7\sqrt{\theta}) qz - z(2+z)}{4D}$
		& \multirow{3}{*}{$\tfrac{(1+z)^2}{4D}$}
		& \multirow{3}{*}{$\tfrac{-1}{1+z}$}
		\\
		$\text{iE}^{\star}_2/Q$
		& $\tfrac{1}{24} \left(\begin{smallmatrix}
		2 & 1 \\
		1 & 2
		\end{smallmatrix}\right)$
		& $\tfrac{1}{4}(1 + z)^2 - \tfrac{1}{6} q z$
		& $\tfrac{(2/3)(2+\sqrt{\theta})q z - z(2+z)}{4D}$
		&
		&
		\\
		$\text{iE}^{\star}_2/\boxrule$
		& $\tfrac{1}{4} \left(\begin{smallmatrix}
		1   & 0   \\
		0   & 0
		\end{smallmatrix}\right)$
		& $\tfrac{1}{4}(1 + z)^2 - \tfrac{1}{2} q z$
		& $\tfrac{1-4D}{4D}$
		&
		&
	\end{tabular}
	\caption{%
		Discretization parameters for
		the schemes from \S\ref{s:dis:2-mul}
		expressed in terms of
		$z := \lambda k$ and $q := \rho^2 / (2\lambda)$.
	}
	\label{t:stab1}
\end{table}

With Theorem~\ref{t:stab:dis}
we find that
$\lim_{k \to 0} C_k = C$
for the schemes
$\text{CN}^{\star}_2$,
$\text{iE}^{\star}_2/Q$,
and
$\text{iE}^{\star}_2/\boxrule$
(but \emph{not} for $\text{iE}^{\star}_2$),
where $C$ is the stability constant in~\eqref{e:stab-mul}
of the continuous problem~\eqref{e:U-mul-L}.

\subsubsection{Error analysis and convergence}
\label{s:convergence}

In this subsection we estimate the difference between
the exact solution $U$ to~\eqref{e:U-mul-L}
and
the discrete solution $U^k$ to~\eqref{e:vfk}.
We first remark that by Lemma~\ref{l:|DIAGk|},
the
norm of
$\cB^k = B - \rho^2 \DIAG^k$
is
bounded by
\begin{equation*}
	%
	\norm{ \cB^k }{}
	\leq
	1 + \tfrac{\rho^2}{2\lambda}
\end{equation*}
for each
{$
	\DIAG^k
	\in
	\{
	\DIAG,
	\,
	\DIAG \circ (q_k \otimes q_k),
	\,
	\eqref{e:DIAGk:box}
	\}
$}.
Moreover, $\cB^k$ satisfies
the inf-sup condition~\eqref{e:cBinfsup}
on
{$
	\widehat{E}^k_\pi \times \widehat{F}^k_\epsilon
	\subset
	E_\pi \times F_\epsilon
$},
and
the dimensions of these subspaces coincide.
Hence, Proposition~\ref{p:well:crime}
on quasi-optimality of the discrete solution
applies.
This quasi-optimality is formulated
in terms of the symmetric subspace $\widehat{E}^k_\pi$,
but
we can improve this to $E^k_\pi$ for symmetric solutions $U$.
Indeed,
if $U \in E_\pi$ is symmetric
then
{$
	\norm{ U - \frac12(w + w^*) }{\pi}
	\leq
	\frac{1}{2} ( \norm{U - w}{\pi} + \norm{(U - w)^*}{\pi} )
	=
	\norm{U - w}{\pi}
$}
for
any $w \in E_\pi$,
where $(\CDOT)^*(s, t) := (\CDOT)(t, s)$.
Furthermore,
the appearing residual
$
(\cB - \cB^k) U = (\DIAG - \DIAG^k) U
$
is a symmetric functional,
whether $U$ is symmetric or not,
and
therefore vanishes on
anti-symmetric elements of $F_\epsilon$.
This leads to the estimate
\begin{equation*}
	%
	\norm{U - U^k}{\pi}
	\leq
	(
	1 + C_k \norm{ \cB^k }{}
	)
	\inf_{ w \in E^k_\pi } \norm{U - w}{\pi}
	+
	C_k
	\norm{
	(\DIAG - \DIAG^k) U
	}{ (F^k_\epsilon)' }
	%
\end{equation*}
for symmetric $\ell$.
Replacing $C_k$ by $(\gamma_k^{-2} + C_k)$,
the assumption of symmetry
may be dropped.

This result
shows
convergence for the {$\text{CN}^{\star}_2$} scheme,
where $\DIAG^k = \DIAG$.
Unfortunately,
it
is not useful
for
the {$\text{iE}^{\star}_2$} scheme
and
its variants,
because
the best approximation from
the discrete space $E_\pi^k$
does not converge
to $U$ as we refine the temporal mesh,
see the discussion at the end of \S\ref{s:dis:1:iE*}.
This motivates
looking at
the postprocessed solution
\[
	\label{e:Ukpost}
	\bar{U}^k := Q_k U^k
	\quad\text{with}\quad
	Q_k := (q_k \otimes q_k)
\]
for these schemes, where $q_k$ is the projection from \eqref{e:Q}.
Recall that $q_k$ is injective on $E^k$.
By $Q_k^{-1}$ we will mean the inverse of $Q_k \from E_\pi^k \to Q_k E_\pi^k$.
In the case of
the {$\text{iE}^{\star}_2$} discretization,
\eqref{e:normqk}
implies
\[
	\label{e:|Q|}
	\norm{ Q_k w }{\pi}
	=
	\tfrac14
	\norm{ w }{\pi}
	\quad
	\forall w \in E_\pi^k
	.
\]

The convergence of the postprocessed solution
will again be
obtained
via
Proposition~\ref{p:well:crime}.
To this end,
we define
$\bar{\cB}^k := \cB^k \circ Q_k^{-1} Q_k \from E_\pi \to F_\epsilon'$
with
the motivation that
the postprocessed solution
solves the modified discrete problem
\[
	\label{e:vfk-post}
	\text{Find}
	\quad
	\bar{U}^k \in Q_k E_\pi^k
	\quad\text{s.t.}\quad
	\bar{\cB}^k( \bar{U}^k, v )
	=
	\ell(v)
	\quad
	\forall v \in F_\epsilon^k
	.
\]
The operator $\bar{\cB}^k$ is bounded with
$\norm{ \bar{\cB}^k }{} \leq 4 \norm{ \cB^k }{}$.
Moreover,
it follows from \eqref{e:|Q|}
that
if
$\cB^k$
satisfies the discrete inf-sup condition \eqref{e:cBinfsup}
on $\widehat{E}_\pi^k \times \widehat{F}_\epsilon^k$
with the constant $C_k^{-1}$
then
so does
$\bar{\cB}^k$
on $Q_k \widehat{E}_\pi^k \times \widehat{F}_\epsilon^k$
with
the constant $4 C_k^{-1}$.
The following is our main result of this section.

\begin{proposition}
	\label{p:post}
	Let $\ell \in F_\epsilon'$ be symmetric.
	Assume the discrete inf-sup condition~\eqref{e:cBinfsup}.
	Then
	the exact solution $U \in E_\pi$ to~\eqref{e:U-mul-L}
	and
	the postprocessed discrete solution
	$\bar{U}^k \in Q_k E^k_\pi$ to~\eqref{e:vfk-post}
	differ by
	\begin{equation*}
		%
		\norm{U - \bar{U}^k}{\pi}
		\leq
		(
		1 + C_k \norm{ \cB^k }{}
		)
		\inf_{ w \in Q_k E^k_\pi } \norm{U - w}{\pi}
		,
	\end{equation*}
	for the {$\text{CN}^{\star}_2$} scheme,
	and
	by
	\[
		\label{e:iE-post}
		\norm{U - \bar{U}^k}{\pi}
		\leq
		(
		1 + C_k \norm{ \cB^k }{}
		)
		\inf_{ w \in Q_k E^k_\pi } \norm{U - w}{\pi}
		+
		\tfrac14 C_k
		\norm{
		(\cB - \bar{\cB}^k) U
		}{ (F^k_\epsilon)' }
	\]
	for any of
	the {$\text{iE}^{\star}_2$} schemes.
\end{proposition}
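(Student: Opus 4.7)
The plan is to apply Proposition~\ref{p:well:crime} separately in each of the two settings (CN$^\star_2$ and iE$^\star_2$), using the discrete inf-sup condition~\eqref{e:cBinfsup} on the symmetric subspaces $\widehat{E}^k_\pi \times \widehat{F}^k_\epsilon$ (or its $Q_k$-scaled version) and then upgrading the best-approximation infimum from the symmetric subspace to the full one via a symmetrization trick. Throughout, I will use that the symmetry of $\ell$ forces the exact solution $U$ to be symmetric (by uniqueness in Theorem~\ref{t:stab-mul} and the fact that the equation is preserved under swapping the two tensor coordinates).

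For the CN$^\star_2$ scheme, the projector $q_k$ is the identity so $Q_k=I$, $\bar U^k=U^k$, $\DIAG^k=\DIAG$ and $\bar\cB^k=\cB^k=\cB$; in particular there is no consistency error. Applying Proposition~\ref{p:well:crime} with $X_h=\widehat E^k_\pi$, $Y_h=\widehat F^k_\epsilon$, $B=\bar B=\cB$ and $\bar\gamma_h^{-1}=C_k$ (from Corollary~\ref{cor:cBinfsup}) yields the desired estimate but with the infimum taken over $\widehat E^k_\pi$. To extend this infimum to all of $E^k_\pi=Q_kE^k_\pi$, I use that for symmetric $U$ and arbitrary $w\in E^k_\pi$, writing $w^*(s,t):=w(t,s)\in E^k_\pi$, the symmetrization $\tfrac12(w+w^*)\in\widehat E^k_\pi$ satisfies $\norm{U-\tfrac12(w+w^*)}{\pi}\le\tfrac12(\norm{U-w}{\pi}+\norm{U-w^*}{\pi})=\norm{U-w}{\pi}$.

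For the iE$^\star_2$ schemes, I apply Proposition~\ref{p:well:crime} with $X_h=Q_k\widehat E^k_\pi$, $Y_h=\widehat F^k_\epsilon$, $B=\cB$ and $\bar B=\bar\cB^k$. The key bookkeeping is that by \eqref{e:|Q|} the operator $Q_k$ contracts the $\pi$-norm by the factor $\tfrac14$ on $E^k_\pi$, so the inf-sup constant for $\bar\cB^k$ on $Q_k\widehat E^k_\pi$ equals $4C_k^{-1}$ (i.e.\ $\bar\gamma_h^{-1}=C_k/4$), while $\norm{\bar\cB^k}{}\le 4\norm{\cB^k}{}$. Multiplying gives $\bar\gamma_h^{-1}\norm{\bar\cB^k}{}\le C_k\norm{\cB^k}{}$, producing the first factor $(1+C_k\norm{\cB^k}{})$ on the right of \eqref{e:iE-post}; the prefactor $\tfrac14$ of the consistency term is simply the surviving $\bar\gamma_h^{-1}=C_k/4$. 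Once again, symmetrization extends the infimum from $Q_k\widehat E^k_\pi$ to all of $Q_kE^k_\pi$, using that $Q_k=q_k\otimes q_k$ commutes with the swap.

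The only subtlety I foresee is justifying the symmetric framework cleanly: one has to verify that the residual functional $v\mapsto(\cB U-\bar\cB^k w)(v)$ is symmetric on $\widehat F^k_\epsilon$ for any $w\in Q_kE^k_\pi$ (so that $\bar U^k$ indeed lies in the \emph{symmetric} subspace $Q_k\widehat E^k_\pi$ and the discrete inf-sup on symmetric spaces is applicable), which follows from the symmetry of $\ell$, of $B$, and of $\DIAG^k$ ensured by Assumption~\ref{enum:DeltaSymm}. Once this symmetry is established, the rest is a direct application of Proposition~\ref{p:well:crime} combined with the norm/inf-sup bookkeeping above and the symmetrization trick.
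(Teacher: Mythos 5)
Your proposal is correct and follows essentially the same route as the paper: apply Proposition~\ref{p:well:crime} on the symmetric subspaces with $\bar B=\bar\cB^k$, use $\norm{\bar\cB^k}{}\le 4\norm{\cB^k}{}$ together with the inf-sup constant $4C_k^{-1}$ for $\bar\cB^k$ on $Q_k\widehat E^k_\pi\times\widehat F^k_\epsilon$ (whence the prefactors $1+C_k\norm{\cB^k}{}$ and $\tfrac14 C_k$), and upgrade the best-approximation infimum to $Q_kE^k_\pi$ by the symmetrization $w\mapsto\tfrac12(w+w^*)$. The only cosmetic imprecision is the claim "$Q_k=I$" for $\text{CN}^\star_2$ (it is the identity only on $E^k$, not on all of $E$), but since $\bar U^k=U^k$ there and the discrete problem involves no variational crime, your conclusion that the residual term is absent is correct.
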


To complete the analysis
we need to estimate the residual term in
\eqref{e:iE-post}.
Hence,
from now on
we focus entirely on
the {$\text{iE}^{\star}_2$} schemes.
Recalling that $\cB = B - \rho^2 \DIAG$
and
$\bar{\cB}^k = (B - \rho^2 \DIAG^k) Q_k^{-1} Q_k$
we split the residual according to
\[
	\label{e:res}
	\cB - \bar{\cB}^k
	=
	\cB (\mathrm{Id} - Q_k)
	-
	B (\mathrm{Id} - Q_k) Q_k^{-1} Q_k
	-
	\rho^2
	(\DIAG Q_k - \DIAG^k) Q_k^{-1} Q_k
\]
and address
it term by term.
\begin{itemize}
	\item
		The first term
		$T_1 := \norm{ \cB (\mathrm{Id} - Q_k) U }{ (F^k_\epsilon)' }$
		in \eqref{e:iE-post}/\eqref{e:res}
		goes to zero upon mesh refinement
		by density of the subspaces $Q_k E_\pi^k \subset E_\pi$.

	\item
		To bound
		the second term
		$T_2 := \norm{ B (\mathrm{Id} - Q_k) Q_k^{-1} Q_k U }{ (F^k_\epsilon)' }$
		in \eqref{e:iE-post}/\eqref{e:res}
		we proceed in two steps.
		First, we observe
		that
		$
		b( (\mathrm{Id} - q_k) w, v )
		=
		( (\mathrm{Id} - q_k) w, v )_E
		=
		( (\mathrm{Id} - q_k) w, (\mathrm{Id} - q_k) v )_E
		\leq
		\norm{w}{E} \norm{ (\mathrm{Id} - q_k) v }{E}
		$
		for any $(w, v) \in E \times F$.
		The Poincar\'e--Wirtinger inequality
		on each temporal element
		yields
		$
		\norm{ (\mathrm{Id} - q_k) v }{E}
		\leq
		\frac{1}{\sqrt{12}} \lambda \max_n k_n
		\norm{ v }{F}
		$
		for all $v \in F^k$.
		Second, we write
		\[
			\label{e:Id-Qk}
			\mathrm{Id} - Q_k
			=
			\tfrac12
			[
			(\mathrm{Id} - q_k) \otimes (\mathrm{Id} + q_k)
			+
			(\mathrm{Id} + q_k) \otimes (\mathrm{Id} - q_k)
			]
			,
		\]
		and use this identity in $B (\mathrm{Id} - Q_k)$.
		Recalling
		$\norm{ Q_k^{-1} Q_k U }{\pi} = 4 \norm{Q_k U}{\pi} \leq 4 \norm{U}{\pi}$
		from \eqref{e:|Q|},
		this gives
		$
		T_2
		\leq
		\frac{4}{\sqrt{3}} \lambda \max_n k_n
		\norm{ U }{\pi}
		.
		$

	\item
		Consider now the third term
		$
		T_3
		:=
		\rho^2
		\norm{ (\DIAG Q_k - \DIAG^k) Q_k^{-1} Q_k U }{ (\widehat{F}^k_\epsilon)' }
		$
		in \eqref{e:iE-post}/\eqref{e:res}.
		For the {$\text{iE}^{\star}_2$} scheme
		where $\DIAG^k = \DIAG$,
		this term does not converge to zero
		upon mesh refinement,
		see \S\ref{s:iE2}.
		For the {$\text{iE}^{\star}_2/Q$} scheme
		where $\DIAG^k = \DIAG Q_k$,
		this term vanishes identically.

		It remains to discuss the ``box rule''
		where
		$\DIAG^k = \eqref{e:DIAGk:box}$.
		To this end, we first note that for
		$v\in F^k_\epsilon$
		\[
			\label{e:box-est0}
			(\DIAG Q_k - \DIAG^k)(Q_k^{-1} Q_k U, v)
			= \DIAG( Q_k U, (\mathrm{Id}  - I_k) v),
		\]
		where $I_k := i_k \otimes i_k$
		with the interpolation operator $i_k$
		onto the space of piecewise constants
		from~\eqref{e:iii-F1}.
		To estimate
		the last expression,
		we expand $\mathrm{Id} - I_k$ as in~\eqref{e:Id-Qk}.
		Recalling from \cite[\S3.2]{Ryan2002}
		that
		$
		C^0(\bar{J} \times \bar{J})
		=
		C^0(\bar{J}) \otimes_\epsilon C^0(\bar{J})
		$,
		the estimates
		$
		\norm{\psi - i_k \psi}{C^0(\bar{J})}
		\leq
		\sqrt{ \lambda \max_n k_n }
		\norm{\psi}{F}
		$
		and
		$
		\norm{\psi + i_k \psi}{C^0(\bar{J})}
		\leq
		\sqrt{2}
		\norm{\psi}{F}
		$
		for $\psi \in F^k$,
		then imply
		\begin{equation*}
			%
			| \eqref{e:box-est0} |
			\leq
			\diag(|Q_k U|) \norm{(\mathrm{Id} - I_k) v}{C^0(\bar{J} \times \bar{J})}
			\leq
			\sqrt{\textstyle 2 \max_n k_n / \lambda}
			\norm{U}{\pi} \norm{v}{\epsilon}.
		\end{equation*}
\end{itemize}

\subsubsection{Non-convergence of {$\text{iE}^{\star}_2$} with postprocessing}
\label{s:iE2}

We introduced
the approximate trace product \eqref{e:DIAGk}
because
even with postprocessing,
the {$\text{iE}^{\star}_2$} scheme
with the exact trace product
does not converge
upon temporal mesh refinement.
In fact,
it is consistent with the value $2 \rho$
for the volatility instead of $\rho$,
as we will indicate here.
First,
as in \eqref{e:normqk},
we have
$\DIAG(w, Q_k v) = \DIAG(4 Q_k w, Q_k v)$
for all $(w, v) \in E^k_\pi \times F^k_\epsilon$.
Therefore,
invoking $\diag(|w - 4 Q_k w|) \leq \tfrac{1}{\lambda} \norm{w - 4 Q_k w}{\pi}$
and the identity~\eqref{e:|Q|},
\[
	\label{e:iE2-4}
	|\DIAG(w, v) - 4 \DIAG(Q_k w, v)|
	=
	|\DIAG(w - 4 Q_k w, v - Q_k v)|
	\leq
	\tfrac{2}{\lambda} \norm{w}{\pi} \norm{v - Q_k v}{C^0(\bar{J} \times \bar{J})}
	.
\]
To bound
the last term,
we use
the estimates
$
\norm{\psi - q_k \psi}{C^0(\bar{J})}
\leq
\tfrac12 \sqrt{ \lambda \max_n k_n }
\norm{\psi}{F}
$
and
$
\norm{\psi + q_k \psi}{C^0(\bar{J})}
\leq
\sqrt{2}
\norm{\psi}{F}
$
for $\psi \in F^k$,
which yield
$
\norm{ v - Q_k v }{ C^0(\bar{J} \times \bar{J}) }
\lesssim
\sqrt{ \lambda \max_n k_n }
\norm{ v }{\epsilon}
$.
By the preceding subsection,
the {$\text{iE}^{\star}_2/Q$} scheme
with $\DIAG Q_k$
does provide
a consistent approximation,
so \eqref{e:iE2-4} shows that
{$\text{iE}^{\star}_2$}
does not.

\subsection{Numerical example}
\label{s:numex}

In the following numerical experiment
we implement
the schemes
{$\text{CN}^{\star}_2$},
{$\text{iE}^{\star}_2$},
{$\text{iE}^{\star}_2/Q$},
and
{$\text{iE}^{\star}_2/\boxrule$}
proposed in \S\ref{s:dis:2-mul}
to solve
the discrete variational problem \eqref{e:vfk}.
In addition,
we apply
the discretizations
of polynomial degree $p = 2$
from \S\ref{s:dis:1:both}
with
the exact trace product $\DIAG$,
denoted by
{$\text{CN}^{\star}_2(2)$}
and
{$\text{iE}^{\star}_2(2)$}.
We choose
$T = 2$, $\lambda = 3$, $\rho^2 = \lambda/2$,
and for
the right-hand side
$\ell(v) := v(0)$,
motivated by \eqref{e:U-mul}.
The error
against the exact solution from \eqref{e:X-table-EXsXt}
is measured
as
the $L_1$ error on the diagonal,
$\mathrm{E}(U^{\mathrm{num}}) := \diag(|U - U^{\mathrm{num}}|)$
for
$U^{\mathrm{num}} = U^k$ (without postprocessing)
and
$U^{\mathrm{num}} = \bar{U}^k$ (with postprocessing).
Note that
only the inequality
$\diag(|w|) \leq \frac1\lambda \norm{w}{\pi}$
holds
(with equality when $w$ is SPSD).
Nevertheless, we use this measure
for simplicity
and for easier comparison with Monte Carlo below.
The results are shown in Figure~\ref{f:nm1}.
Convergence of the schemes
is summarized in the following table
(along with the number of conjugate gradients iterations as discussed below).
The convergence, where present, is
of first order in
the temporal mesh width.
%
%
%
\[
	\label{t:nm1}
	%
	\text{
	\newcommand{\yes}{$\checkmark$}
	\newcommand{\no}{$\times$}
	\begin{tabular}{l|cccccc}
	& {$\text{CN}^{\star}_2$} & {$\text{CN}^{\star}_2(2)$}
	& {$\text{iE}^{\star}_2$} & {$\text{iE}^{\star}_2(2)$}
	& {$\text{iE}^{\star}_2/Q$} & {$\text{iE}^{\star}_2/\boxrule$}
	\\
	\hline
	$\bar{U}^k$       & \yes & \yes & \no & \yes & \yes & \yes
	\\
	$U^k$             & \yes & \yes & \no & \yes & \no  & \no
	\\
	\hline
	$n_{\mathrm{CG}}$ & 50   & 71   & 294 & 113  & 50   & 49
	\end{tabular}
	}
\]
%
%
%
These results are in line
with
the convergence results
established in
\S\ref{s:dis:2-mul}.
The schemes
of polynomial degree $p = 2$
exhibit only first order convergence,
presumably
due to
the limited smoothness of
the second moment across the diagonal.
However, they do not require
pre- or postprocessing
for convergence.
The stability of
the {$\text{iE}^{\star}_2(2)$} scheme,
in particular,
does not depend on the temporal mesh width
as long as it is equidistant, see \eqref{e:ga-p},
but our analysis does not cover
this statement beyond the trivial range \eqref{e:range}.

\begin{figure}[t]
	\includegraphics[width=0.49\textwidth]{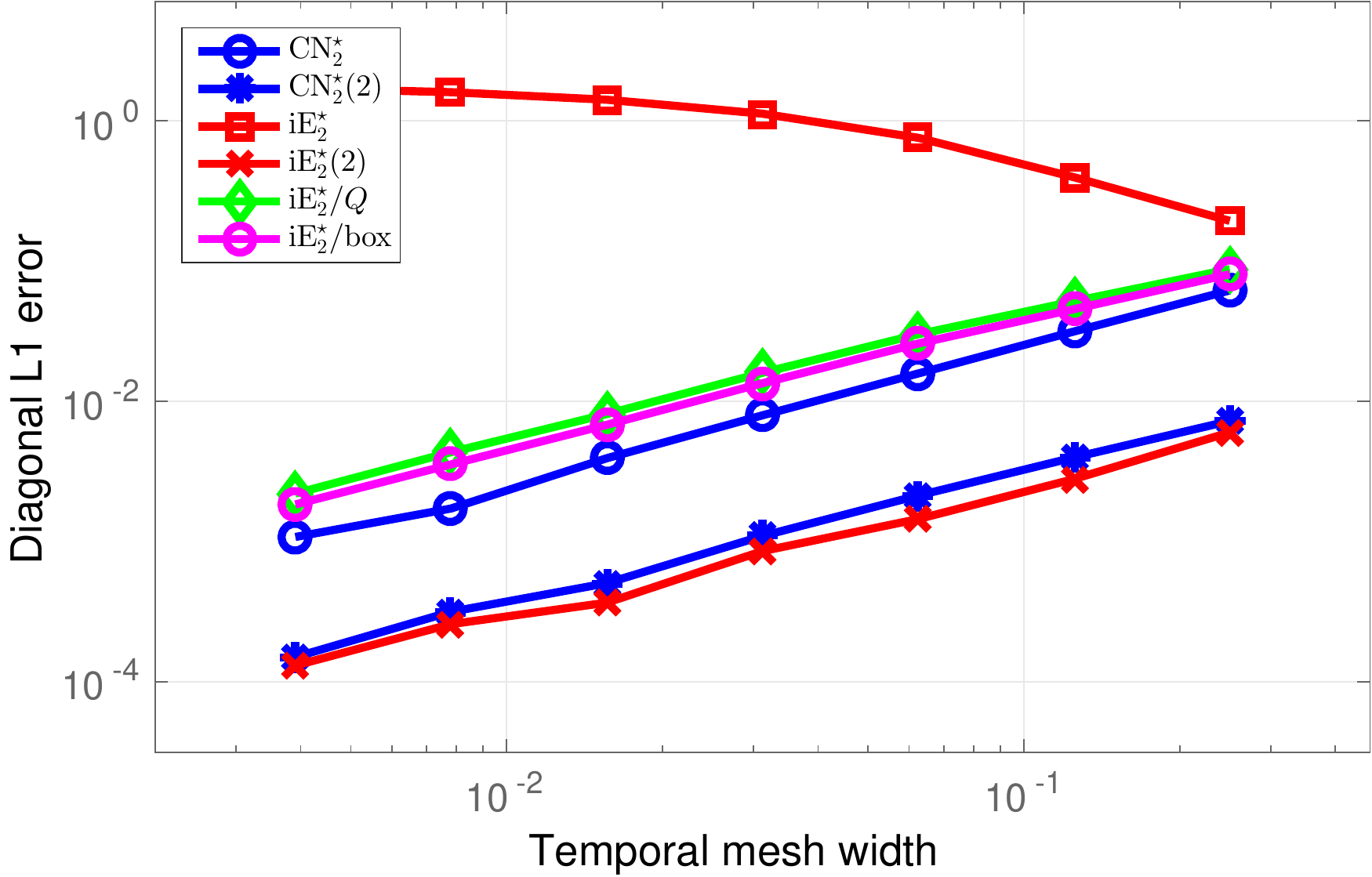}
	\hfill
	\includegraphics[width=0.49\textwidth]{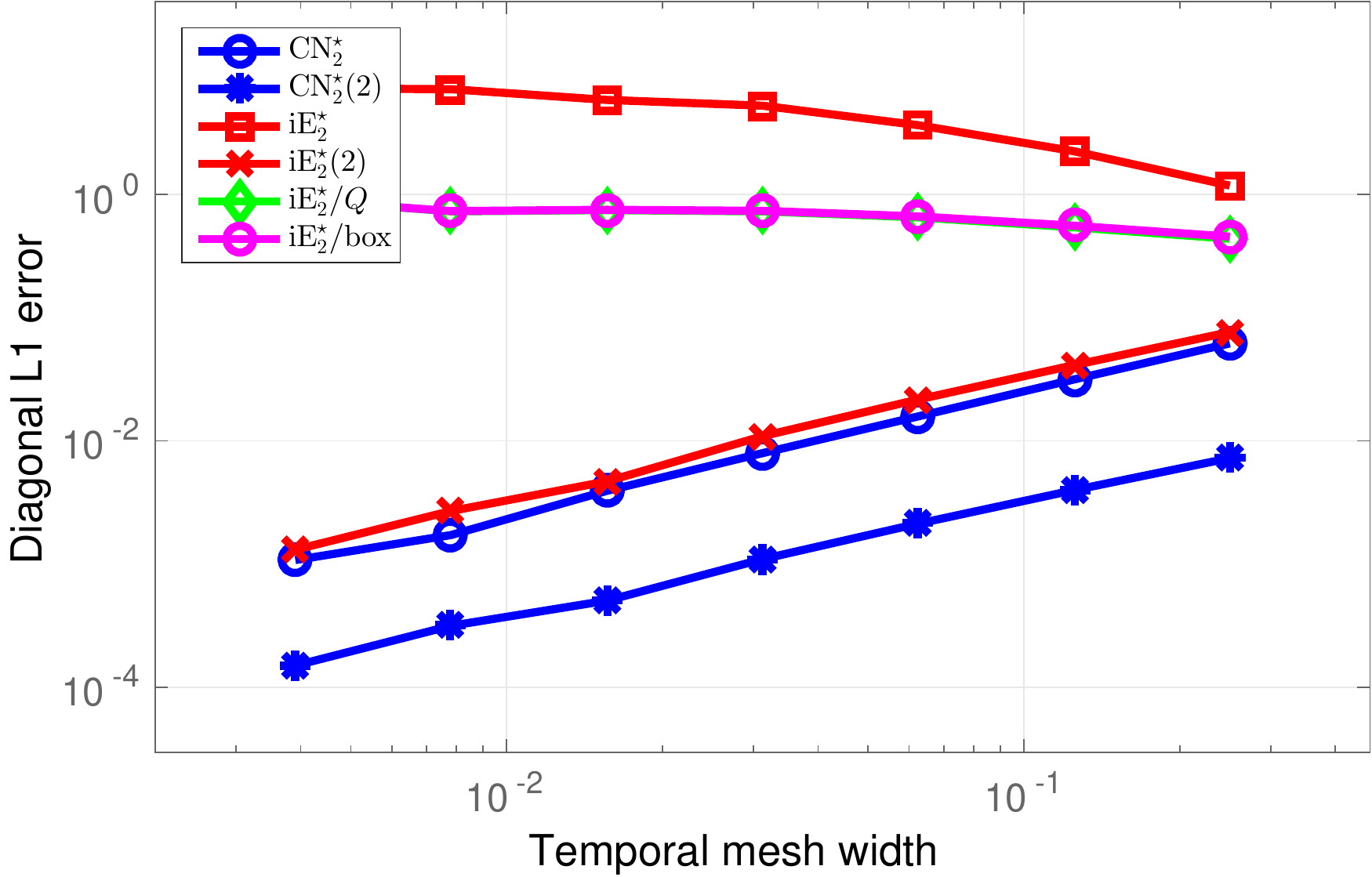}

	\caption{%
		The error
		$\mathrm{E}(U^{\mathrm{num}}) = \diag(|U - U^{\mathrm{num}}|)$
		as a function of the temporal mesh width
		for the example from
		\S\ref{s:numex}.
		\textbf{Left}:
		with postprocessing,
		$U^{\mathrm{num}} = \bar{U}^k$.
		\textbf{Right}:
		without postprocessing,
		$U^{\mathrm{num}} = U^k$.
		\label{f:nm1}
	}
\end{figure}

\newcommand{\VEC}{\operatorname{Vec}}
The discrete variational problem \eqref{e:vfk},
with the choice of bases described at the beginning of \S\ref{s:dis:2-mul},
leads to
the linear algebraic problem
$\mathbf{B} \VEC(\mathbf{U}) = \mathbf{F}$.
Here, $\VEC$ stacks the columns of
the matrix $\mathbf{U}$ into one long vector.
Let $\mathbf{M} := \mathbf{m} \otimes \mathbf{m}$
and $\mathbf{N} := \mathbf{n} \otimes \mathbf{n}$,
where $\mathbf{m} / \mathbf{n}$ are the mass matrices for $E / F$.
We symmetrize the problem
as
$\mathbf{B}^\T \mathbf{N}^{-1} \mathbf{B} \VEC(\mathbf{U}) = \mathbf{B}^\T \mathbf{N}^{-1} \mathbf{F}$
and
solve this with the conjugate gradients method
preconditioned with $\mathbf{M}$.
The matrix-vector products are implemented in a matrix-free fashion
with linear complexity in the size of $\mathbf{U}$,
which is of order $k^{-2}$.
The symmetrization is motivated by
operator preconditioning
that was shown
to be effective
for space-time discretizations of parabolic evolution equations \cite{AndreevTobler2015},
but
the correct adaptation to
the present setting
of Banach spaces that are not strictly convex
is an open issue.
We use the MATLAB \texttt{pcg} solver with tolerance $10^{-10}$,
resulting in a number of iterations $n_{\mathrm{CG}}$
that increases with increasing temporal resolution.
Thus the computational effort is of order $n_{\mathrm{CG}} k^{-2}$.
The number of iterations $n_{\mathrm{CG}}$ for $k = 2^{-9} T$
is shown in Table \eqref{t:nm1}.
%


Another possibility to solve the discrete problem
is indicated by Lemma \ref{l:Uknn},
where first only the diagonal of
the discrete second moment is determined
from the data.
More fundamentally,
one could directly target numerically
the ordinary differential / integral equation satisfied
by the diagonal
of the continuous second moment,
see the proof of Theorem \ref{t:stab-mul}.

We comment briefly on the error
of the Monte Carlo empirical estimate
of the second moment.
Let $X_1, \ldots, X_R$ be
i.i.d.~copies of the solution process $X$.
The empirical estimate of the second moment $M$
in $s, t \in \bar{J}$ with $R$ samples
is the random variable
$M_R(s, t) := \frac1R \sum_{r = 1}^R X_r(s) X_r(t)$.
Then we have
$ 
\e{ |M(s, t) - M_R(s, t)|^2 }
=
\Var(M_R(s, t))
=
\tfrac1R \Var(X(s) X(t))
.
$ 
Setting $s = t$ and integrating over $J$
leads to
the strong error estimate
$
\e{ \diag(|M - M_R|)^2 }
\leq
\frac{T}{R}
\int_J \Var{( X(t)^2 )} \rd t
.
$
We expect a similar estimate to hold for the $\norm{\CDOT}{\pi}$ norm.
Balancing
the Monte Carlo error $1/\sqrt{R}$
with
the temporal discretization error $k$
requires
$R \sim k^{-2}$
samples;
since adding one summand to $M_R$ is on the order of $k^{-2}$ operations,
this
leads to an overall effort of $\mathcal{O}(k^{-4})$.
The effort could be reduced with parallelization
and other techniques mentioned
in the introduction.
%
%

\section{Stochastic PDEs with affine multiplicative noise}
\label{s:pde}

In this section we generalize the preceding discussion
of scalar stochastic ODEs
to vector-valued stochastic PDEs
\[
	\label{e:spde-mul}
	\rd X(t) + A X(t) \rd t
	=
	G[ X(t) ] \rd L(t)
	,
	\quad
	t \in \bar{J},
	\quad\text{with}\quad
	X(0) = X_0
	.
\]
Here, $A \from \mathcal{D}(A) \subset H \to H$ is
a self-adjoint, positive definite operator,
densely defined on a real Hilbert space $H$,
with a compact inverse $A^{-1} \from H \to H$.
Furthermore, $L := (L(t),t\geq 0)$ is a
square-integrable zero-mean L\'{e}vy process
taking values in a Hilbert space $\cU$
with a self-adjoint
positive semi-definite
trace-class covariance operator $Q$, i.e.,
$
\bbE[
	\scalar{L(s), x}{\cU}
	\scalar{L(t), y}{\cU}
]
=
(s\wedge t) \scalar{Q x, y}{\cU}
$
for all $s, t \geq 0$ and $x, y \in \cU$.
For each $\varphi \in H$,
$G[\varphi] \from \cU \to H$
is a bounded linear operator
and
$G$ is affine:
$G[\varphi] = G_1[\varphi] + G_2$,
for certain
$G_1 \in \cL(H; \cL(\cU; H))$
and
$G_2 \in \cL(\cU; H)$.
Further technical assumptions
on $G$, $L$ and $X_0$
are those of \cite[\S2]{KirchnerLangLarsson2016}.

We define the space $V \subset H$ with the norm
$\norm{\CDOT}{V} := \norm{A^{1/2} \nativecdot}{H}$.
Identifying $H$ with its dual $H'$,
we obtain the Gelfand triple
\begin{equation*}
	\label{e:gelfand1}
	V \hookrightarrow H \cong H' \hookrightarrow V'
\end{equation*}
with continuous and dense embeddings,
and the $H$ inner product
has a unique extension by continuity
to the duality pairing on $V'\times V$,
denoted by $\duality{\CDOT,\CDOT}{}$.
Moreover, akin to~\eqref{e:order}, we find
\[
	\label{e:gelfand2}
	V_\pi
    \hookrightarrow
    H_\pi
    \hookrightarrow
    H_2 \cong H'_2
    \hookrightarrow
    V'_2
    \hookrightarrow (V')_\epsilon 
	,
\]
and
the $H_2$ inner product
extends continuously to the duality pairing
$\duality{\CDOT,\CDOT}{\pi,\epsilon}$
on $V_\pi \times (V')_\epsilon$.
The functional framework for the deterministic PDE
of the second moment
is based on the Bochner spaces
\[
	\label{e:XY}
	\cX := L_2(J; V)
	\TEXT{and}
	\cY := \{ v \in H^1(J;V') \cap L_2(J;V) : v(T) = 0 \}
\]
which are
equipped with the norms
$\norm{w}{\cX}^2 := \int_J \norm{w(t)}{V}^2 \rd t$
and
$ 
\norm{v}{\cY}^2
:=
\norm{-\partial_t v + Av}{L_2(J;V')}^2
=
\int_J \norm{\partial_t v(t)}{V'}^2 \rd t
+
\int_J \norm{v(t)}{V}^2 \rd t
+
\norm{v(0)}{H}^2
$ 
and the obvious corresponding inner products.
The norm on $\cY$ is equivalent to
the one used in~\cite{KirchnerLangLarsson2016}.
Analogously to~\eqref{e:Fnorm-eq},
these norms render
the operator $b \from \cX \to \cY'$,
stemming from the bilinear form
\[
	\label{e:b-pde}
	b \from \cX \times \cY \to \IR
	,
	\quad
	b(w, v) := \int_J \duality{w(t), -\partial_t v(t) + Av(t)}{} \rd t
	,
\]
an isometric isomorphism.
Consequently,
on the tensor product spaces
$\cX_\pi := \cX \otimes_\pi \cX$
and
$\cY_\epsilon' := (\cY\otimes_\epsilon\cY)' \cong \cY' \otimes_\pi \cY'$,
the (properly extended) operator
\[
	\label{e:B-pde}
	B := (b \otimes b)
	\from \cX_\pi \to \cY_\epsilon'
	\quad
	\text{is an isometric isomorphism}
	.
\]

As in~\eqref{e:cB}, the multiplicative noise in~\eqref{e:spde-mul}
causes an additional term acting on the temporal diagonals
of elements in $\cX_\pi$ and $\cY_\epsilon$ in
the bilinear form $\cB\from\cX_\pi\times\cY_\epsilon\to\IR$
for the variational formulation of the second moment equation.
The continuity of this diagonal term is a consequence of
the following two properties
of the tensor spaces $\cX_\pi$ and $\cY_\epsilon$:
\textbf{a)} the boundedness
of point evaluation functionals
on $\cY$ and $\cY_{\epsilon}$
addressed in Lemma~\ref{l:YC0}, and
\textbf{b)} the role
of the diagonal for elements in $\cX_\pi$
emphasized in Lemma~\ref{l:pinorm-pde}.
Being simple extensions of
Lemmas~\ref{lem:FC0} and~\ref{l:2andpinorm},
respectively,
the proofs are omitted.

\begin{lemma}
    \label{l:YC0}
    Let $\tilde{v}\in\cY$, $v\in\cY_\epsilon$. Then
    \begin{equation*}
        \norm{\tilde{v}(t)}{H} \leq \tfrac{1}{\sqrt{2}} \norm{\tilde{v}}{\cY}
        \AND
        \norm{v(s,t)}{H_\epsilon} \leq \tfrac{1}{2} \norm{v}{\cY_\epsilon}
        \quad
        \forall s,t\in\bar{J}.
	\end{equation*}
    In particular,
    the temporal diagonal
    $\widehat{v} : t \mapsto v(t, t)$
    is in $C^0(J; (V')_\epsilon)$.
\end{lemma}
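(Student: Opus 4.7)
The plan is to mimic the scalar arguments used in Lemma \ref{lem:FC0} and in the proof of Lemma \ref{l:Delta} (see~\eqref{e:vtt}), but now in the vector-valued Bochner setting. First, for a given $\tilde v \in \cY$, I would recall the standard Gelfand-triple chain rule, which states that $\tilde v$ admits a continuous representative in $C^0(\bar J; H)$ and that $t \mapsto \norm{\tilde v(t)}{H}^2$ is absolutely continuous with $\tfrac{d}{dt}\norm{\tilde v(t)}{H}^2 = 2\,\duality{\partial_t \tilde v(t), \tilde v(t)}{}$ (see, e.g., Lions--Magenes). Integrating this identity separately over $(0,t)$ and over $(t,T)$, using $\tilde v(T) = 0$ in the latter, and applying Cauchy--Schwarz for the $V'$--$V$ duality followed by Young's inequality $2|ab|\leq a^2 + b^2$, produces
\begin{equation*}
  \norm{\tilde v(t)}{H}^2 \leq \norm{\tilde v(0)}{H}^2 + \int_0^t \norm{\partial_t \tilde v(r)}{V'}^2 \rd r + \int_0^t \norm{\tilde v(r)}{V}^2 \rd r
\end{equation*}
and the analogous bound over $(t,T)$ without the $\norm{\tilde v(0)}{H}^2$ term. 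Adding both and comparing with $\norm{\tilde v}{\cY}^2$ gives $2\norm{\tilde v(t)}{H}^2 \leq \norm{\tilde v}{\cY}^2$, which is the first claim.

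For the tensor statement, the strategy is to realize $v(s,t)$ as the value of $v$ under a product of two pointwise evaluation functionals. For $g_1, g_2 \in S(H')$ and $s,t \in \bar J$, the maps $\Dirac_s^{g_1} \from \cY \to \IR$, $\tilde v \mapsto g_1(\tilde v(s))$ and $\Dirac_t^{g_2}$ defined similarly are continuous linear functionals on $\cY$ whose norms are at most $\tfrac{1}{\sqrt 2}\norm{g_i}{H'}$, by the first part. For $v \in \cY \otimes \cY$ written as $v = \sum_j v_j^1 \otimes v_j^2$, one has
\begin{equation*}
  (g_1 \otimes g_2)(v(s,t)) = \sum_j g_1(v_j^1(s))\, g_2(v_j^2(t)) = (\Dirac_s^{g_1} \otimes \Dirac_t^{g_2})(v).
\end{equation*}
Since $\sqrt 2\,\Dirac_s^{g_1}$ and $\sqrt 2\,\Dirac_t^{g_2}$ lie in the closed unit ball of $\cY'$, the isometric identification $\cY_\epsilon' \cong \cY' \otimes_\pi \cY'$ (the vector-valued analogue of~\eqref{e:bob-epi}) bounds the right-hand side by $\tfrac{1}{2}\norm{v}{\cY_\epsilon}$. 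Taking the supremum over $g_1, g_2 \in S(H')$ yields $\norm{v(s,t)}{H_\epsilon} \leq \tfrac{1}{2}\norm{v}{\cY_\epsilon}$ on the algebraic tensor product, and the bound extends to $\cY_\epsilon$ by density and continuity of point evaluation.

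For the concluding continuity assertion, observe first that on the algebraic tensor product $\cY \otimes \cY$ the diagonal $\widehat v(t) = \sum_j v_j^1(t) \otimes v_j^2(t)$ is manifestly in $C^0(\bar J; H_\epsilon)$ because each $v_j^i \in \cY \hookrightarrow C^0(\bar J; H)$ by the first part. For general $v \in \cY_\epsilon$, approximate by $v_n \in \cY \otimes \cY$ in $\cY_\epsilon$; the uniform estimate $\sup_{t \in \bar J}\norm{\widehat{v_n}(t) - \widehat{v_m}(t)}{H_\epsilon} \leq \tfrac{1}{2}\norm{v_n - v_m}{\cY_\epsilon}$ from the second part establishes the Cauchy property in $C^0(\bar J; H_\epsilon)$, whence $\widehat v \in C^0(\bar J; H_\epsilon)$. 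The continuous embedding $H \hookrightarrow V'$ induces $H_\epsilon \hookrightarrow (V')_\epsilon$, which yields $\widehat v \in C^0(\bar J; (V')_\epsilon)$. The only subtle point is the identification $\cY_\epsilon' \cong \cY' \otimes_\pi \cY'$ used to convert the supremum defining $\norm{v(s,t)}{H_\epsilon}$ into the injective norm of $v$; this is the direct Bochner-space analogue of the scalar identity already invoked in~\eqref{e:bob-epi} and should cause no new difficulty.
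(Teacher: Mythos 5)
Your proof is correct and follows exactly the route the paper intends: it omits the proof as a ``simple extension'' of Lemma~\ref{lem:FC0}, and your first part is the Gelfand-triple version of that scalar argument, while your second part and the continuity of the diagonal reproduce the point-evaluation/injective-norm reasoning already used in \eqref{e:vtt} in the proof of Lemma~\ref{l:Delta}. (The appeal to $\cY_\epsilon' \cong \cY' \otimes_\pi \cY'$ is unnecessary --- the defining supremum of the injective norm already gives $|(f_1\otimes f_2)(v)| \leq \norm{f_1}{\cY'}\norm{f_2}{\cY'}\norm{v}{\cY_\epsilon}$ --- but this is harmless.)
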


\begin{lemma}
	\label{l:pinorm-pde}
	If $w \in \cX_\pi$,
	then
	its temporal diagonal $\widehat{w} \from t \mapsto w(t, t)$
	belongs to
	$L_1(J; V_\pi)$
	and satisfies
	$\norm{ \widehat{w} }{ L_1(J; V_\pi) } \leq \norm{w}{\cX_\pi}$.
	If $w \in\cX_\pi$ is $\cX$-SPSD \eqref{e:HSPSD} then equality holds.
\end{lemma}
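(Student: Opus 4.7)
The plan is to prove the inequality $\norm{\widehat{w}}{L_1(J;V_\pi)} \leq \norm{w}{\cX_\pi}$ first on the algebraic tensor product $\cX \otimes \cX$ and then extend by density, and subsequently to establish the equality in the $\cX$-SPSD case via a spectral decomposition, in direct analogy with the proof of Lemma~\ref{l:2andpinorm}.

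For $w = w_1 \otimes w_2$ with $w_i \in \cX = L_2(J;V)$, the pointwise diagonal $\widehat{w}(t) = w_1(t) \otimes w_2(t)$ lies in $V_\pi$ for a.e.\ $t \in J$, and by~\eqref{e:simple} together with the Cauchy--Schwarz inequality in $L_2(J)$,
\begin{equation*}
\int_J \norm{\widehat{w}(t)}{V_\pi} \rd t = \int_J \norm{w_1(t)}{V} \norm{w_2(t)}{V} \rd t \leq \norm{w_1}{\cX} \norm{w_2}{\cX} = \norm{w_1 \otimes w_2}{\cX_\pi}.
\end{equation*}
For a finite sum $w = \sum_i w_i^1 \otimes w_i^2 \in \cX \otimes \cX$, the triangle inequality in $V_\pi$ and this estimate yield $\norm{\widehat{w}}{L_1(J;V_\pi)} \leq \sum_i \norm{w_i^1}{\cX} \norm{w_i^2}{\cX}$; taking the infimum over all such representations gives the desired inequality on $\cX \otimes \cX$, and the diagonal map $w \mapsto \widehat{w}$ extends by continuity to a bounded linear map $\cX_\pi \to L_1(J;V_\pi)$.

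For the equality statement, the self-adjoint positive semi-definite trace-class operator $S_w \in \cL(\cX)$ associated with an $\cX$-SPSD $w \in \cX_\pi$ admits an orthonormal eigenbasis $\{e_n\} \subset \cX$ with non-negative eigenvalues $\{s_n\}$ satisfying $\norm{w}{\cX_\pi} = \sum_n s_n < \infty$ and $w = \sum_n s_n\, e_n \otimes e_n$ in $\cX_\pi$ (as in the proof of Lemma~\ref{l:2andpinorm}). Since the partial sums $w^N := \sum_{n \leq N} s_n e_n \otimes e_n$ converge to $w$ in $\cX_\pi$, continuity of the diagonal map provides $\widehat{w^N} \to \widehat{w}$ in $L_1(J;V_\pi)$; passing to a subsequence yields $\widehat{w}(t) = \sum_n s_n\, e_n(t) \otimes e_n(t)$ in $V_\pi$ for a.e.\ $t \in J$. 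At every such $t$ the limit is $V$-SPSD (closedness of the SPSD cone), and the trace-equals-nuclear-norm identity for $V$-SPSD elements gives $\norm{\widehat{w}(t)}{V_\pi} = \sum_n s_n \norm{e_n(t)}{V}^2$. Fubini--Tonelli applied to the non-negative integrand then produces
\begin{equation*}
\norm{\widehat{w}}{L_1(J;V_\pi)} = \sum_n s_n \int_J \norm{e_n(t)}{V}^2 \rd t = \sum_n s_n \norm{e_n}{\cX}^2 = \sum_n s_n = \norm{w}{\cX_\pi}.
\end{equation*}

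The main obstacle is the pointwise identification of $\norm{\widehat{w}(t)}{V_\pi}$: the values $\{e_n(t)\}$ need not be pairwise $V$-orthogonal at individual times $t$, so the representation $\widehat{w}(t) = \sum_n s_n e_n(t) \otimes e_n(t)$ is not a spectral decomposition in $V$ but merely a trace-class representation, and the identity $\norm{\widehat{w}(t)}{V_\pi} = \sum_n s_n \norm{e_n(t)}{V}^2$ must be obtained from the general trace-equals-nuclear-norm principle for SPSD elements rather than by orthogonality. Once that identity is secured pointwise, Fubini--Tonelli settles the interchange of sum and integral thanks to non-negativity of all terms and the global bound $\sum_n s_n < \infty$.
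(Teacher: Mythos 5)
Your proof is correct. The paper omits the proof of this lemma, declaring it a simple extension of Lemma~\ref{l:2andpinorm}, and your argument is exactly that extension: the inequality via the projective-norm representation, Cauchy--Schwarz on each rank-one term and the infimum over representations, and the equality via the spectral decomposition $w = \sum_n s_n\, e_n \otimes e_n$ of the associated trace-class operator on $\cX$. You also correctly isolate the one point at which the extension from the scalar case is not entirely trivial --- that the values $e_n(t)$ need not be pairwise $V$-orthogonal at fixed $t$, so the pointwise identity $\norm{\widehat{w}(t)}{V_\pi} = \sum_n s_n \norm{e_n(t)}{V}^2$ must come from the trace-equals-nuclear-norm principle for positive semi-definite elements of $V \otimes_\pi V$ rather than from orthogonality --- and resolve it correctly before applying Tonelli.
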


Together with \eqref{e:gelfand2}, the two lemmas imply that
the vector analogue of the trace product \eqref{e:def:DIAG},
\[
	\label{e:DIAG}
	\DIAG(w, v)
	:=
	\int_J \duality{ w(t,t), v(t,t) }{\pi,\epsilon} \rd t.
\]
is well-defined on $\cX_\pi \times \cY_\epsilon$.

The covariance of the L\'evy process
will enter the deterministic PDE
for the second moment through the linear operator
$\cG_1 \from V_\pi \to V_\pi$
defined
by
\[
	\label{e:defcurlyG1}
	\scalar{\cG_1 [\psi \otimes \tilde{\psi}], \varphi \otimes \tilde\varphi}{H_2}
	=
	\scalar{
	Q^{1/2} G_1[       \psi ]' \varphi,
	Q^{1/2} G_1[\tilde{\psi}]' \tilde\varphi
	}{\cU}
	\quad
	\forall \varphi, \tilde\varphi \in H
	,
\]
where $G_1[\psi]' \from H \to \cU$ is the adjoint of $G_1[\psi]$.
In the scalar case, $\cG_1 = \rho^2$.
This operator is well-defined under suitable boundedness
assumptions on $G_1$.
For example,
if $\psi \mapsto G_1[\psi] Q^{1/2}$
is a bounded map from
$V$ into the Hilbert--Schmidt operator space $\cL_2(\cU; V)$,
then
\[
	\label{e:curlyG1bound}
	C_G
	:=
	\norm{\cG_1}{\cL(V_\pi)}
	\leq
	\norm{G_1[\nativecdot] Q^{1/2}}{\cL(V;\cL_2(\cU;V))}^2
	.
\]
Henceforth, we assume that $C_G$ is indeed finite.

We write $\DIAG \cG_1 \from \cX_\pi \to \cY_\epsilon'$ for the operator corresponding to $\DIAG(\cG_1 \CDOT, \CDOT)$.
This composition is also well-defined
because
the temporal diagonal
of $\cG_1 w$
belongs to $L_1(J; V_\pi)$
if $w \in \cX_\pi$,
cf.\ \cite[\S3]{KirchnerLangLarsson2016}.

%
Finally, we define
the operator for the second moment equation
in the vector-valued case,
\begin{equation*}
	\cB := B - \DIAG \cG_1
	.
\end{equation*}

Given a functional $\ell \in \cY_\epsilon'$,
we are now interested in
the variational problem
\[
	\label{e:vf-pde}
	\text{Find}
	\quad
	U \in \cX_\pi
	\quad\text{s.t.}\quad
	\cB(U, v) = \ell(v)
	\quad
	\forall v \in \cY_\epsilon.
\]

The second moment $M$ and the covariance $C$ of
the solution process $X$ to the SPDE~\eqref{e:spde-mul}
satisfy the deterministic variational problem~\eqref{e:vf-pde}
with suitable right-hand sides
$\ell_M$ and $\ell_C$,
see \cite[Thms.~4.2 \& 6.1]{KirchnerLangLarsson2016}.
These are given by
$
\ell_M(v)
:=
\duality{\bbE[X_0 \otimes X_0], v(0)}{\pi,\epsilon}
+
\DIAG((\cG - \cG_1)[\bbE X \otimes \bbE X], v)
$
and
$
\ell_C(v)
:=
\duality{\operatorname{Cov}(X_0), v(0)}{\pi,\epsilon}
+
\DIAG(\cG [\bbE X \otimes \bbE X], v)
$.
Here, $\cG[\nativecdot]$ is defined
as in~\eqref{e:defcurlyG1} with $G_1$ replaced by $G$.
Note that
$\ell_M, \ell_C \in \cY_\epsilon'$
are both SPSD.

%


\subsection{Well-posedness of the second moment PDE}
\label{s:pde:wellposed}

Well-posedness of~\eqref{e:vf-pde}
was deduced in~\cite[Thm.~5.5]{KirchnerLangLarsson2016}
under the smallness condition
$
\norm{G_1[\nativecdot] Q^{1/2}}{\cL(V;\cL_2(\cU;H))} < 1.
$
The following theorem
disposes of this assumption
%
by exploiting
semigroup theory on the Banach space $V_\pi$.
It is the vector analogue of Theorem~\ref{t:stab-mul}.
As in the scalar case,
the solution $U$ of \eqref{e:vf-pde} inherits
symmetry and definiteness
from an SPSD right-hand side $\ell \in \cY_\epsilon'$.
This crucial structural property
allows us to derive a stability
bound in the natural tensor norm.

\begin{theorem}
	\label{t:stab-mul-pde}
	Suppose $\cG_1 \in \cL(V_\pi)$
	with norm $C_G = \eqref{e:curlyG1bound}$.
	Then, for every functional $\ell \in \cY_\epsilon'$
	there exists a unique
	solution $U \in \cX_\pi$ to the variational problem~\eqref{e:vf-pde}.
	If $\ell$ is SPSD
	then
	$U$ is $\cX$-SPSD
	and satisfies the stability bound
	\[
		\label{e:UCL}
		\norm{U}{\cX_\pi} \leq C \, \norm{\ell}{\cY_\epsilon'}
		\quad
		\text{with}
		\quad
		C := \tfrac{C_G e^{(C_G - 2 \lambda_1)T} - 2 \lambda_1}{C_G - 2 \lambda_1}
		,
	\]
	where
	$\lambda_1 > 0$ is the smallest eigenvalue of $A$.
	In the limit $C_G = 2 \lambda_1$ we have $C = C_G T + 1$.
\end{theorem}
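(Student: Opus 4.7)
The plan is to adapt the scalar proof of Theorem \ref{t:stab-mul} almost line by line, with the scalar exponential $e^{-\lambda(t-s)}$ replaced by the tensorised semigroup $\mathbf{S}(t):=S(t)\otimes S(t)$ acting on $V_\pi$, where $S(t)=e^{-tA}$ is the analytic semigroup generated by $-A$ on $V$. First, I rewrite \eqref{e:vf-pde} as $U = B^{-1}(\DIAG\cG_1 U) + B^{-1}\ell$ and invoke the vector analogue of the integral representation \eqref{e:ib}, obtaining
\[
U(t,t') = \int_0^{t\wedge t'}[S(t-s)\otimes S(t'-s)]\,\cG_1\widehat U(s)\,ds + (B^{-1}\ell)(t,t'), \qquad \widehat U(s):=U(s,s).
\]
Setting $t=t'$ isolates a closed Volterra equation on the temporal diagonal,
\[
\widehat U(t) = \widehat g(t) + \int_0^t\mathbf{S}(t-s)\,\cG_1\widehat U(s)\,ds, \qquad \widehat g(t):=(B^{-1}\ell)(t,t),
\]
with $\widehat g\in L_1(J;V_\pi)$ by Lemma \ref{l:pinorm-pde}.

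The main technical step, and my expected principal obstacle, is setting up $\mathbf{S}$ as a $C_0$-semigroup on the non-Hilbert, non-reflexive space $V_\pi$. The cross-norm property \eqref{e:simple} together with $\norm{S(t)}{\cL(V)}\leq e^{-\lambda_1 t}$ yields $\norm{\mathbf{S}(t)w}{\pi}\leq e^{-2\lambda_1 t}\sum_i\norm{w_i^1}{V}\norm{w_i^2}{V}$ for any representation $w=\sum_i w_i^1\otimes w_i^2$, hence $\norm{\mathbf{S}(t)}{\cL(V_\pi)}\leq e^{-2\lambda_1 t}$ after taking the infimum; density of $V\otimes V$ in $V_\pi$ and an $\varepsilon/3$ argument extend boundedness and strong continuity from simple tensors to all of $V_\pi$. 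Since $\cG_1\in\cL(V_\pi)$ with norm at most $C_G$ by the standing assumption \eqref{e:curlyG1bound}, the bounded perturbation theorem produces a $C_0$-semigroup $\mathbf{T}(t)$ on $V_\pi$ with $\norm{\mathbf{T}(t)}{\cL(V_\pi)}\leq e^{(C_G-2\lambda_1)t}$, so that the Volterra equation admits a unique mild solution $\widehat U\in C^0(\bar J;V_\pi)$. The full solution $U\in\cX_\pi$ is then reconstructed from $\widehat U$ by one further application of the integral representation above.

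For the stability bound, the decisive observation is that every operator entering the iteration preserves SPSD-ness. The vector analogue of Lemma \ref{l:psd} makes $\widehat g(t)$ pointwise $V$-SPSD whenever $\ell$ is SPSD; the definition \eqref{e:defcurlyG1} gives $\scalar{\cG_1[\psi\otimes\psi],\varphi\otimes\varphi}{H_2} = \norm{Q^{1/2}G_1[\psi]'\varphi}{\cU}^2\geq 0$, so $\cG_1$ sends $V$-SPSD elements to $V$-SPSD elements after a spectral decomposition as in Lemma \ref{l:2andpinorm}; and $\mathbf{S}(t)(\psi\otimes\psi)=S(t)\psi\otimes S(t)\psi$ trivially preserves the property. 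Consequently $\widehat U(t)$ is $V$-SPSD for every $t$, $U$ is $\cX$-SPSD, and Lemma \ref{l:pinorm-pde} identifies $\norm{U}{\cX_\pi}=\int_J\norm{\widehat U(t)}{V_\pi}\,dt$. Taking norms in the Volterra equation, applying the semigroup bound, and using Fubini together with the isometry $\int_J\norm{\widehat g(t)}{V_\pi}\,dt = \norm{B^{-1}\ell}{\cX_\pi}=\norm{\ell}{\cY_\epsilon'}$ reproduces exactly the constant $C$ of \eqref{e:UCL} via the same algebra as in \eqref{e:upiell}--\eqref{e:dBDu}, with the borderline $C_G=2\lambda_1$ handled as a limit. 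For a general $\ell$ I decompose $\ell=\ell^+-\ell^-+\ell^a$ as in \eqref{e:lpm}, note that the antisymmetric piece has vanishing diagonal so that $U^a=B^{-1}\ell^a$ with $\norm{U^a}{\cX_\pi}=\norm{\ell^a}{\cY_\epsilon'}$, and assemble existence and uniqueness by linearity and triangle inequality; the SPSD bound \eqref{e:UCL} is the statement restricted to SPSD $\ell$.
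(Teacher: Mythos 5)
Your proposal is correct and follows essentially the same route as the paper: the integral representation of $B^{-1}\DIAG$ via the tensorized semigroup $S(t)\otimes S(t)$ on $V_\pi$, the bounded perturbation theorem yielding the growth bound $e^{(C_G-2\lambda_1)t}$, preservation of SPSD-ness to reduce $\norm{U}{\cX_\pi}$ to the $L_1(J;V_\pi)$ norm of the diagonal via Lemma~\ref{l:pinorm-pde}, and the scalar algebra for the constant $C$. The only cosmetic differences are that the paper solves the diagonal equation by deriving an ODE for the diagonal of $B^{-1}\DIAG\cG_1 U$ rather than quoting mild-solution theory for the Volterra equation, and that your closing decomposition $\ell=\ell^+-\ell^-+\ell^a$ is superfluous, since the construction already yields existence and uniqueness for arbitrary $\ell\in\cY_\epsilon'$.
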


\begin{proof}
	Recall that $B \from \cX_\pi \to \cY_\epsilon'$ is an isometric isomorphism.
	Thus,
	the variational problem~\eqref{e:vf-pde}
	is equivalent to the following equality in $\cX_\pi$,
	\[
		\label{e:Ueq-pde}
		U = B^{-1} \ell + B^{-1} \DIAG \cG_1 U
		.
	\]
	Let $\widehat{U}$, $g$ and $f$ denote the diagonals of
	$U$, $B^{-1}\ell$ and $B^{-1} \DIAG \cG_1 U$.
	These are functions $J \to V_\pi$.
	By the assumptions at the beginning \S\ref{s:pde},
	a $C_0$-semigroup of contractions
	$(S(t))_{t\geq 0}$
	is generated by $-A$
	on $H$ and also on $V$.
	Owing to
    $\DIAG(w,v\otimes \tilde{v}) = \int_J \int_J \Dirac(s-s') \scalar{w(s,s'), v(s) \otimes \tilde{v}(s')}{H_2} \rd s \rd s'$,
	for $w\in\cX_\pi$ and $v,\tilde{v}\in\cY$,
	we can represent
	$B^{-1} \DIAG w \in \cX_\pi$
	explicitly in terms of the semigroup by
	\[
		\label{e:BinvDIAGeq-pde}
		(B^{-1}\DIAG w)(t,t')
		= \int_0^{t\wedge t'} (S(t-s) \otimes S(t'-s)) w(s,s) \rd s
		,
		\quad
		t, t' \in J
		.
	\]
	Set 
	$\cS(r) := S(r)\otimes S(r)$.
	Then
	$(\cS(t))_{t\geq 0}$
	forms a $C_0$-semigroup on $V_\pi$
	generated by
	$\cA := -\mathrm{Id} \otimes A - A \otimes \mathrm{Id}$.
	If $(-A)$ is the Laplacian in $d$ dimensions,
	then $\cA$ is the $2d$-Laplacian.
	By the perturbation theorem~\cite[Thm.~1.3]{Engel2000}, also $\tilde{\cA} := \cA + \cG_1$
	is a generator of a $C_0$-semigroup $(\tilde{\cS}(t))_{t\geq 0}$ on $V_\pi$,
	and
	$\norm{\tilde{\cS}(t)}{\cL(V_\pi)} \leq e^{(C_G - 2\lambda_1)t}$.
	With these definitions, we find that
	$f(s) = \int_0^s \cS(s-r) \cG_1 \widehat{U}(r) \rd r$.
	Thus, the derivative of $f$ satisfies
	$f' = \cA f + \cG_1 \widehat{U} = \tilde{\cA} f + \cG_1 g$
	and $f\in L_1(J;V_\pi)$ can be identified uniquely with
	$f(s) = \int_0^s \tilde{\cS}(s-r) \cG_1 g(r)\rd r$.
	It follows that
	$\widehat{U} = g + f$
	is well-defined in $L_1(J;V_\pi)$.
	By~\eqref{e:Ueq-pde}--\eqref{e:BinvDIAGeq-pde} then,
	$U \in \cX_\pi$ is uniquely determined via
	\[
		\label{e:sol-U-pde}
		U(t,t') = (B^{-1} \ell)(t, t')
		+ \int_0^{t\wedge t'} (S(t-s) \otimes S(t'-s)) \cG_1 [ g(s) + f(s) ] \rd s.
	\]

	Assume now that $\ell\in\cY_\epsilon'$ is SPSD.
	Then, as in Lemma~\ref{l:psd},
	one can show that
	$B^{-1}\ell$ is $\cX$-SPSD
	and symmetry
	of $U$ is evident from the representation~\eqref{e:sol-U-pde}.
	The operators $\cG_1$ and $\cA$
	both preserve $V$-SPSD-ness;
	therefore
	the semigroup $\tilde{\cS}$
	generated by $\tilde{\cA} = \cA + \cG_1$ does, too:
	$\tilde{\cS}(t) w$ is $V$-SPSD, $t \geq 0$,
	if $w \in V_2$ is $V$-SPSD.
 	Therefore, for (a.e.) $s\in J$,
    we have semi-definiteness on $V$
    for quantities appearing under the integral in \eqref{e:sol-U-pde}:
	\begin{equation*}
		\scalar{\cG_1 g(s), \vartheta\otimes\vartheta}{V_2} \geq 0,
		\quad
		\scalar{\cG_1 f(s), \vartheta\otimes\vartheta}{V_2}
		= \int_0^s \scalar{\cG_1\tilde{\cS}(s-r)\cG_1 g(r), \vartheta\otimes\vartheta}{V_2} \rd r \geq 0
        \quad \forall \vartheta\in V.
	\end{equation*}
	Setting $z_\varphi(s) := \int_s^T S(t-s)' \varphi(t) \rd t$
	with the $V$-adjoint $S(r)'$ of $S(r)$,
	we find for all $\varphi \in \cX$
	\begin{equation*}
		\scalar{U, \varphi\otimes \varphi}{\cX_2}
		  = \scalar{B^{-1}\ell, \varphi\otimes \varphi}{\cX_2}
		+ \int_J \scalar{\cG_1 [g(s) + f(s)], z_\varphi(s) \otimes z_\varphi(s)}{V_2} \rd s \geq 0.
	\end{equation*}
	This proves that $U$ is $\cX$-SPSD.
	By Lemma~\ref{l:pinorm-pde} above, we have
	$\norm{U}{\cX_\pi}=\norm{\widehat{U}}{L_1(J;V_\pi)}$ and
	with $\widehat{U} = g + f$ we conclude that
	\begin{equation*}
		\norm{U}{\cX_\pi}
		\leq \norm{\ell}{\cY_\epsilon'} + \int_J \int_0^t \norm{\tilde{\cS}(t-s) \cG_1 g(s)}{V_\pi} \rd s \rd t
		\leq \norm{\ell}{\cY_\epsilon'} + C_G  \int_J \int_s^T e^{(C_G - 2 \lambda_1)(t-s)} \rd t \, \norm{g(s)}{V_\pi} \rd s,
	\end{equation*}
	where we have used~\eqref{e:curlyG1bound} and
	the bound
	$\norm{\tilde{\cS}(t)}{\cL(V_\pi)} \leq e^{(C_G - 2\lambda_1)t}$.
	In this way, the stability estimate~\eqref{e:UCL}
	follows as in the scalar case~\eqref{e:dBDu}.
\end{proof}

\subsection{Second moment discretization}
\label{s:pde:discretization}

In order to introduce conforming discretizations of the
second moment equation in the vector case~\eqref{e:vf-pde},
let $(V^h)_{h>0}$ be a family of finite-dimensional
subspaces of $V$,
whose members carry the same norm as on $V$.
In addition, let $E^k \times F^k \subset E \times F$ be
a discretization pair as considered in \S\ref{s:dis}
with basis functions $\{e_n\}_n \subset E^k$
and $\{v_i\}_i \subset F^k$.
The family $\{e_n\}_n$ is normalized in $L_2(J)$.
As before,
$N := \dim E^k = \dim F^k$
is the dimension of the temporal discretization.
If not specified otherwise, the range of the indices is
\[
    \label{e:ijmnpqrs}
	0 \leq i,j \leq N-1,
	\qquad
	1 \leq m,n \leq N,
	\qquad
	1 \leq p,q,r,s \leq \dim V^h
	.
\]
By choosing
$\cX^{k,h} := E^k \otimes V^h$ and
$\cY^{k,h} := F^k \otimes V^h$
we obtain finite-dimensional subspaces
of the Bochner spaces \eqref{e:XY}.
%
The discrete spaces
$\cX^{k,h}_\pi      := \cX^{k,h} \otimes \cX^{k,h}$ and
$\cY^{k,h}_\epsilon := \cY^{k,h} \otimes \cY^{k,h}$
then form a conforming discretization pair
of the trial and test spaces in~\eqref{e:vf-pde}.
As in \S\S\ref{s:dis:1:o}--\ref{s:dis:2-mul},
the subscript indicates the norm.

The discrete operator $A^h$ on $V^h$ is defined by
$\scalar{A^h \varphi^h, \psi^h}{H} = \duality{A \varphi^h, \psi^h}{}$
for $\varphi^h, \psi^h\in V^h$.
Its eigenvalues and the
the corresponding $H$-orthonormal eigenvectors
are denoted by $\{ \lambda_p^h \}_p$ and
$\{ \varphi_p^h \}_p$, respectively.
We define the bilinear form $b_p$ as in \eqref{e:b},
replacing $\lambda$ by $\lambda_p^h$.

Let the discretization pair $E^k \times F^k$ satisfy
\begin{equation*}
    \label{e:gammakp}
    \gamma_{k,p} := \inf_{w\in S(E^k)} \sup_{v\in S(F^k)} b_p(w,v) > 0,
    \quad
    1 \leq p \leq \dim V^h.
\end{equation*}
Then the inf-sup constant of
\begin{itemize}
		\item
			$b$ from \eqref{e:b-pde}
			on $\cX^{k,h} \times \cY^{k,h}$
			equals $\min_p \gamma_{k,p} > 0$;
			\hfill
	        \eqlab{e:b-pde:g}
		\item
			$B$ from \eqref{e:B-pde}
			on $\cX^{k,h}_\pi \times \cY^{k,h}_\epsilon$
			equals $\min_p \gamma_{k,p}^2 > 0$.
			\hfill
			\eqlab{e:B-pde:g}
\end{itemize}

For the approximation of the vector trace product \eqref{e:DIAG}
we have to take its interaction with the operator $\cG_1$
in the variational problem~\eqref{e:vf-pde}
into account.
Even if $w\in \cX^{k,h}_\pi$ is an element
of the discrete space, this is not necessarily
the case for $\cG_1[w]$.
This necessitates the definition of an approximate
vector trace product on $\cX_\pi \times \cY_\epsilon$.
To this end,
we first note that, for $w\in\cX_\pi$, $ v\in\cY_\epsilon$,
\[
    \label{e:wpq-vpq}
    w_{pq} := \scalar{w,\varphi^h_p\otimes\varphi^h_q}{H_2}
    \AND
    v_{pq} := \scalar{v,\varphi^h_p\otimes\varphi^h_q}{H_2},
    \quad
    1\leq p,q \leq \dim V^h,
\]
can be identified with elements in $E_\pi$ and $F_\epsilon$, respectively.
Furthermore, if we let $P_h$ denote the $H$-orthonormal projection onto $V^h$,
we obtain
$(P_h\otimes P_h) w = \sum_{pq} w_{pq} (\varphi^h_p\otimes\varphi^h_q) \in \cX_\pi$
and, similarly, for $(P_h\otimes P_h) v \in \cY_\epsilon$.
We can then approximate the vector trace product as follows,
\begin{equation*}
    \DIAG(w,v) \approx \DIAG((P_h\otimes P_h) w, (P_h\otimes P_h) v) = \sum_{pq} \DIAG( w_{pq}, v_{pq} )
        \approx \sum_{pq} \DIAG^k( w_{pq}, v_{pq} ),
\end{equation*}
where $\DIAG^k\from E_\pi\times F_\epsilon \to \IR$
is the scalar approximate trace product from~\eqref{e:DIAGk}.
This motivates the following definition
of the approximate trace product in the vector-valued case,
\[
    \label{e:DIAGkh}
    \DIAG^{k,h}\from \cX_\pi\times \cY_\epsilon \to \IR,
    \quad
    \DIAG^{k,h} ( w,v ) := \sum_{pq} \DIAG^k(w_{pq}, v_{pq}),
\]
with $w_{pq}\in E_\pi$ and $v_{pq}\in F_\epsilon$
from~\eqref{e:wpq-vpq}.
We note that the identities
$\DIAG^{k,h} = \DIAG$
and $\DIAG^{k,h}\cG_1 = \DIAG \cG_1$
hold on the discrete subspaces
$\cX^{k,h}_\pi \times \cY^{k,h}_\epsilon$
if $\DIAG^k :=\DIAG$ is the exact scalar trace product.
We furthermore point out that the definition of $\DIAG^{k,h}$ in \eqref{e:DIAGkh}
depends on the subspace $V^h\subset V$,
but it is independent of the choice
of the $H$-orthonormal basis $\{\varphi^h_p\}_p \subset V^h$.

%
Setting
\[
    \label{e:cBkh}
    \cB^{k,h} := B - \DIAG^{k,h} \cG_1,
\]
we introduce the discrete variational problem
\[
	\label{e:vf-dis-pde}
	\text{Find}
	\quad
	U^{k,h} \in \cX^{k,h}_\pi
	\quad\text{s.t.}\quad
	\cB^{k,h}(U^{k,h}, v) = \ell(v)
	\quad
	\forall v \in \cY^{k,h}_\epsilon.
\]

In the following, we suppose that the temporal mesh is uniform.
Then $\boldsymbol{\DIAG}^n$ in~\eqref{e:DIAGn} does not depend on $n$
and,
for all $n$,
\begin{equation*}
	b_p(e_n,v_{n-1}) = b_{p0} := b_p(e_1,v_0)
	\TEXT{and}
	b_p(e_n,v_n) = b_{p1} := b_p(e_1,v_1)
	.
\end{equation*}
Furthermore, $b_{p0}\neq 0$ for all $p$
by~\eqref{e:b-pde:g}.
Under these assumptions,
we derive existence, uniqueness and stability
of a solution $U^{k,h}$ to the discrete problem~\eqref{e:vf-dis-pde}
for any SPSD functional $\ell$.
As in \S\ref{s:dis:2-mul},
the result is formulated using the following constants:
\[
	\beta
	&
	:= (1-\tilde{\beta})^{-1},
	\qquad
	\tilde{\beta}
	:=
	\norm{
		P_h G_1[\nativecdot] Q^{1/2}
	}{
		\cL(V^h; \cL_2(\cU; V^h))
	}^2
	\DIAG_{11} \max_p b_{p0}^{-2}
	,
	\label{e:def:beta-pde}
	\\
	\theta_{+} & := \max_p |\theta_p|,
	\qquad
	\theta_{-} := \min_p |\theta_p|,
	\qquad
	\theta_p := b_{p0}^{-1} b_{p1},
	%
    \notag
	\\
	\alpha
	& :=
	\beta \max_{p}
	\left\{
		\theta_+^2 + b_{p0}^{-2}
		|\DIAG_{22} - 2\DIAG_{12}\theta_p| \,
		\norm{
			P_h G_1[\nativecdot] Q^{1/2}
		}{
			\cL(V^h; \cL_2(\cU; V^h))
		}^2
	\right\}.
	%
    \notag
\]
%
These quantities should be compared to
\eqref{e:betan}--\eqref{e:alphan}
from the scalar case.
The following result is the analogue of Theorem~\ref{t:stab:dis}.

\begin{theorem}
	\label{t:stab:dis-pde}
	Suppose the temporal mesh is unform and that $\beta > 0$.
	Then, if $\ell\in\cY_\epsilon'$ is SPSD,
	the discrete variational problem~\eqref{e:vf-dis-pde}
	has a unique solution $U^{k,h}\in\cX^{k,h}_\pi$,
	it is $\cX$-SPSD,
	and
	\[
		\label{e:dis-stab-pde}
		\norm{U^{k,h}}{\cX_\pi} \leq C_{k,h}  \norm{\ell}{\cY_\epsilon'},
		\quad
		\text{where}
		\quad
		C_{k,h} := \max_p \gamma_{k,p}^{-2} \, \beta ( 1 + (\alpha - \theta_-^2 ) \tfrac{\alpha^{N-1}-1}{\alpha-1} ).
	\]
\end{theorem}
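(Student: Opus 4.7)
The plan is to transfer the three-step strategy of the scalar Theorem \ref{t:stab:dis} to the vector setting, using the eigenbasis $\{\varphi_p^h\}_p$ of the discrete operator $A^h$ to decouple the elliptic part and Lemma \ref{l:pinorm-pde} to reduce the $\cX_\pi$-norm to the diagonal. First I would expand
\begin{equation*}
U^{k,h} = \sum_{mnpq} U_{mnpq}\,(e_m \otimes \varphi^h_p)\otimes(e_n\otimes\varphi^h_q),
\end{equation*}
so that $B$ acts in block-diagonal form with respect to the mode pair $(p,q)$ through $b_p\otimes b_q$. By \eqref{e:B-pde:g} the inf-sup constant on $\cX^{k,h}_\pi\times\cY^{k,h}_\epsilon$ equals $\min_p\gamma_{k,p}^2$, so $B$ is invertible on these discrete subspaces; hence \eqref{e:vf-dis-pde} is equivalent to the fixed-point identity $U^{k,h} = B^{-1}\ell + B^{-1}\DIAG^{k,h}\cG_1 U^{k,h}$. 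The approximate trace product \eqref{e:DIAGkh} is itself diagonal in modes, and the only coupling across modes comes through $\cG_1$.

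Second, following the scalar Lemma \ref{l:Uknn}, the locality of $\DIAG^{k,h}$ in the temporal index implies that the diagonal coefficients, grouped as matrices $\mathbf{U}_n := (U_{nnpq})_{pq}$ on $V^h\otimes V^h$, satisfy a one-step matrix recursion of the form
\begin{equation*}
\mathbf{U}_n = \boldsymbol{\beta}_n \mathbf{G}_{nn} + \boldsymbol{\alpha}_n \mathbf{U}_{n-1} - \boldsymbol{\beta}_n\boldsymbol{\theta}_n\boldsymbol{\theta}_n^{\T} \mathbf{G}_{n-1,n-1},
\end{equation*}
where $\boldsymbol{\beta}_n,\boldsymbol{\alpha}_n,\boldsymbol{\theta}_n$ are the analogues of \eqref{e:betan}--\eqref{e:alphan} obtained by replacing $\rho^2 \DIAG^n_{ij}$ by the operator $\DIAG_{ij}\,P_h G_1[\CDOT]Q^{1/2}(P_h G_1[\CDOT]Q^{1/2})^*$ acting on $V^h\otimes V^h$, and the off-diagonal $U_{mnpq}$ with $m\neq n$ are then determined from the diagonals by a vector analogue of \eqref{e:Gmn}. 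Taking operator norms in $\cL(V^h\otimes V^h)$ and bounding the coupling uniformly by $\norm{P_h G_1[\CDOT]Q^{1/2}}{\cL(V^h;\cL_2(\cU;V^h))}^2$ produces the scalar majorising recursion with the constants $\beta$, $\alpha$, $\theta_\pm$ of \eqref{e:def:beta-pde}. The vector analogue of \eqref{e:alpha-theta-rel}, namely that the $2\times 2$ block matrices of $\DIAG$ values remain positive semi-definite under the scaling by the basis, guarantees $\alpha\geq \theta_-^2$ and ensures that if $\mathbf{G}_{nn}$ is $V$-SPSD for every $n$ (which holds whenever $\ell$ is SPSD, by Lemmas \ref{l:psd} and \ref{l:discrete-psd} extended to the present tensor setting) and $\beta>0$, then each $\mathbf{U}_n$ is $V$-SPSD. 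This gives existence, uniqueness, and the $\cX$-SPSD property of $U^{k,h}$.

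Third, for the quantitative bound \eqref{e:dis-stab-pde} I would use Lemma \ref{l:pinorm-pde}: since $U^{k,h}$ is $\cX$-SPSD,
\begin{equation*}
\norm{U^{k,h}}{\cX_\pi} = \norm{\widehat U^{k,h}}{L_1(J;V_\pi)} = \sum_{n=1}^N \norm{e_n}{L_2(J_n)}^2\,\norm{\mathbf{U}_n}{V_\pi},
\end{equation*}
after which the same change of summation order as in the proof of Theorem \ref{t:stab:dis} (following \eqref{e:stabproof1}) converts the recursion into a geometric sum with ratio $\alpha$, yielding the prefactor $\beta(1+(\alpha-\theta_-^2)\tfrac{\alpha^{N-1}-1}{\alpha-1})$. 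The factor $\max_p\gamma_{k,p}^{-2}$ appears when estimating $\norm{\mathbf{G}_{nn}}{V_\pi}$ in terms of $\norm{\ell}{\cY_\epsilon'}$ via the mode-wise inf-sup constants of $b_p\otimes b_q$.

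The main obstacle I expect is the bookkeeping in the matrix recursion: unlike the scalar case, each step is $V^h$-valued in both tensor factors, so monotonicity has to be established in the cone of $V$-SPSD elements of $V^h\otimes V^h$ rather than in $[0,\infty)$. Bounding the coupling operator $\cG_1$ on the discrete subspace uniformly by the Hilbert--Schmidt norm in \eqref{e:curlyG1bound}, and verifying that the $2\times 2$ blocks $\boldsymbol{\DIAG}^n$ still control the coupling after the replacement $\rho^2 \leadsto P_h G_1[\CDOT]Q^{1/2}(\CDOT)^*$, is the delicate point; once this is done, the geometric-sum calculation carries over verbatim from the scalar proof.
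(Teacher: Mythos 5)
Your overall architecture coincides with the paper's proof: expand $U^{k,h}$ in the basis $\{e_m\otimes\varphi_p^h\}$, collect the temporal-diagonal coefficients into matrices $\Umat_n:=(U_{nn,pq})_{pq}$, derive a one-step matrix recursion coupling modes only through $\cG_1$, propagate $V$-SPSD-ness of the $\Umat_n$ through that recursion, and then convert to the scalar geometric-sum bound via Lemma~\ref{l:pinorm-pde}. The paper packages the mode coupling into the operator $\cT$ of \eqref{e:def:T}, writes the recursion as $(\mathrm{Id}-\cT)\Umat_n=\Thetamat(\mathrm{Id}-\cT)\Umat_{n-1}\Thetamat+\Gmat_n-\Thetamat\Gmat_{n-1}\Thetamat+\sum_{\eta}\Dmat_\eta\cT\Umat_{n-1}\Dmat_\eta$ (with $\Dmat_\eta$ built from a Cholesky factor of $\boldsymbol{\DIAG}$, which is exactly what makes the correction term manifestly cone-preserving), and inverts $(\mathrm{Id}-\cT)$ on the SPSD cone by a Neumann series --- this is the precise content of your matrix-valued $\boldsymbol{\beta}_n$, and it is where the hypothesis $\beta>0$ (i.e.\ $\tilde\beta<1$) enters.

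The one step that would fail as written is the majorization: you propose ``taking operator norms in $\cL(V^h\otimes V^h)$'' to reduce the matrix recursion to the scalar one. The quantity you must control is $\norm{\Umat_n}{V_\pi}$, which on the SPSD cone equals the weighted trace $\tr_{\Lambdamat}(\Umat_n)=\tr(\Lambdamat^{1/2}\Umat_n\Lambdamat^{1/2})$, not the operator norm; majorizing trace by operator norm costs a factor $\dim V^h$ and destroys the bound \eqref{e:dis-stab-pde}. The correct substitute is the trace inequality $\tr_{\Lambdamat}(\cT\Wmat)\leq\tilde\beta\,\tr_{\Lambdamat}(\Wmat)$ for $\Wmat\in\bbM^h_{+}$ (Lemma~\ref{l:T}(ii)), whose proof is not a norm estimate but an exact computation: expanding $W=\sum_q s_q\,\psi_q\otimes\psi_q$ in a $V$-orthonormal eigenbasis and using $\sum_p\scalar{\cG_1[\psi_q\otimes\psi_q],\varphi_p^h\otimes\varphi_p^h}{H_2}\lambda_p^h=\norm{P_hG_1[\psi_q]Q^{1/2}}{\cL_2(\cU;V^h)}^2$. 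Together with monotonicity of $\tr_{\Lambdamat}$ on the cone and $\tr_{\Lambdamat}(\Thetamat\Wmat\Thetamat)\leq\theta_+^2\tr_{\Lambdamat}(\Wmat)$, this yields the scalar recursion with the constants $\beta,\alpha,\theta_\pm$ of \eqref{e:def:beta-pde}; your final geometric-sum step and the factor $\max_p\gamma_{k,p}^{-2}$ then go through exactly as you describe. You flag the cone-monotonicity issue yourself at the end, so the gap is one of mechanism rather than of strategy, but as stated the operator-norm reduction is not a valid step.
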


Let
$\bbM^h_{+}$
denote the set of SPSD matrices of size $\dim V^h \times \dim V^h$.
Define the matrix-valued operator
$\cT$ on $\bbM^h_{+}$ componentwise by
\[
	\label{e:def:T}
	(\cT \Wmat)_{pq}
	:= \DIAG_{11} b_{p0}^{-1} b_{q0}^{-1} \sum_{rs} W_{rs} \scalar{\cG_1[\varphi_r^h\otimes\varphi_s^h], \varphi_p^h\otimes\varphi_q^h}{H_2}
	= \DIAG_{11} b_{p0}^{-1} b_{q0}^{-1} \scalar{\cG_1 W, \varphi_p^h\otimes\varphi_q^h}{H_2}
	,
\]
where the second equality holds whenever
\[
	\label{e:rel-W-Wh}
	\Wmat = (W_{rs})_{rs} \in \IR^{\dim V^h \times \dim V^h}
	\TEXT{and}
	W := \sum_{rs} W_{rs} \varphi_r^h\otimes \varphi_s^h\in V^h \otimes V^h
	.
\]
The following lemma is the key ingredient
for the proof of Theorem~\ref{t:stab:dis-pde}.
For
$\Lambdamat := \operatorname{diag}(\lambda_p^h)_p$,
we introduce the weighted trace
$
\tr_{\Lambdamat}(\Wmat)
:=
\tr( \Lambdamat^{1/2} \Wmat \Lambdamat^{1/2})
$.
Note that
$\tr_{\Lambdamat}(\Wmat) = \norm{W}{V_\pi}$
for  $\Wmat\in\bbM^h_{+}$.
%
%

\begin{lemma}
	\label{l:T}
	Recall $\tilde{\beta}$
	and $\beta$ from~\eqref{e:def:beta-pde}.	
	For
	$\Wmat \in \bbM^h_{+}$ the following hold:
	\begin{enumerate}[label={(\roman*)}]
		\item
			$\bbM^h_{+}$ is invariant under $\cT$, i.e., $\cT \Wmat \in \bbM^h_{+}$.
		\item
			$\tr_{\Lambdamat}(\cT \Wmat) \leq \tilde{\beta} \tr_{\Lambdamat}(\Wmat)$.
		\item
			If $\tilde{\beta} < 1$
			then $(\mathrm{Id} - \cT)^{-1} \Wmat$ exists in $\bbM^h_{+}$
            and
            $\tr_{\Lambdamat}((\mathrm{Id} - \cT)^{-1} \Wmat) \leq \beta \tr_{\Lambdamat}(\Wmat)$.
	\end{enumerate}
\end{lemma}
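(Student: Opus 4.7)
The plan is to establish the three parts in order, building from the $\cX$-SPSD-preservation of $\cG_1$ (cf.\ the proof of Theorem~\ref{t:stab-mul-pde}) and the identification $\norm{W}{V_\pi} = \tr_{\Lambdamat}(\Wmat)$ for SPSD $\Wmat$ corresponding to $W\in V^h\otimes V^h$ via \eqref{e:rel-W-Wh}. The last identity can be checked by passing to the $V$-orthonormal basis $\{(\lambda_p^h)^{-1/2}\varphi_p^h\}_p$ of $V^h$ and computing the trace of the operator associated with $W$, using $\norm{\varphi_p^h}{V}^2 = \lambda_p^h$.

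For part (i), given $\Wmat\in\bbM^h_+$, I would Cholesky-decompose $\Wmat = \sum_k \boldsymbol{\zeta}_k \boldsymbol{\zeta}_k^\T$ and set $W_k := \sum_r \zeta_{k,r}\varphi_r^h \in V^h$. Then $W = \sum_k W_k\otimes W_k$ is $V$-SPSD in $V^h\otimes V^h$, and by the definition~\eqref{e:defcurlyG1} of $\cG_1$ each $\cG_1[W_k\otimes W_k]$ is $V$-SPSD, hence so is $\cG_1 W$. Writing $(\cT \Wmat)_{pq} = \DIAG_{11}\, b_{p0}^{-1} b_{q0}^{-1}\, \langle \cG_1 W, \varphi_p^h\otimes\varphi_q^h\rangle_{H_2}$ exhibits $\cT\Wmat$ as $\DIAG_{11}\, \Dmat \, \widetilde{\Wmat}\, \Dmat$, where $\Dmat := \operatorname{diag}(b_{p0}^{-1})_p$ and $\widetilde{\Wmat}_{pq} := \langle \cG_1 W, \varphi_p^h\otimes\varphi_q^h\rangle_{H_2}$. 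The latter is SPSD because for any $\mathbf{x}\in\IR^{\dim V^h}$, setting $\psi := \sum_p x_p\varphi_p^h\in V^h$, one has $\mathbf{x}^\T \widetilde{\Wmat}\mathbf{x} = \langle \cG_1 W, \psi\otimes\psi\rangle_{H_2}\geq 0$. Since $\DIAG_{11}\geq 0$, the congruence $\Dmat\widetilde{\Wmat}\Dmat$ is SPSD as well.

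For part (ii), which is the core of the lemma and the main obstacle, I would compute, using the symmetric decomposition from (i),
\begin{equation*}
    \tr_{\Lambdamat}(\cT\Wmat)
    = \DIAG_{11}\sum_p \lambda_p^h b_{p0}^{-2} \langle\cG_1 W,\varphi_p^h\otimes\varphi_p^h\rangle_{H_2}
    = \DIAG_{11}\sum_p \lambda_p^h b_{p0}^{-2} \sum_k \norm{Q^{1/2} G_1[W_k]'\varphi_p^h}{\cU}^2.
\end{equation*}
Pulling out $\max_p b_{p0}^{-2}$, I need to identify $\sum_p \lambda_p^h \norm{Q^{1/2} G_1[W_k]'\varphi_p^h}{\cU}^2$ as a Hilbert--Schmidt norm on $V^h$. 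Expanding $\norm{Q^{1/2} G_1[W_k]'\varphi_p^h}{\cU}^2 = \sum_j \langle\varphi_p^h, G_1[W_k]Q^{1/2} u_j\rangle_H^2$ in an ONB $\{u_j\}$ of $\cU$, and using that $\sum_p \lambda_p^h \langle \varphi_p^h,g\rangle_H^2 = \norm{P_h g}{V}^2$ for $g\in H$ (since $\norm{\varphi}{V}^2 = \langle A^h\varphi,\varphi\rangle_H$ on $V^h$), I recover $\sum_j \norm{P_h G_1[W_k]Q^{1/2} u_j}{V}^2 = \norm{P_h G_1[W_k] Q^{1/2}}{\cL_2(\cU;V^h)}^2 \leq M^2\norm{W_k}{V}^2$, where $M$ denotes the constant in \eqref{e:def:beta-pde}. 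Summing over $k$ and invoking $\sum_k \norm{W_k}{V}^2 = \norm{W}{V_\pi} = \tr_{\Lambdamat}(\Wmat)$ yields $\tr_{\Lambdamat}(\cT\Wmat) \leq \tilde{\beta}\tr_{\Lambdamat}(\Wmat)$.

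For part (iii), $\tilde\beta<1$ together with (i) and (ii) allow a Neumann-series argument: $\cT^n\Wmat\in\bbM^h_+$ for all $n$ by iteration of (i), and $\tr_{\Lambdamat}(\cT^n \Wmat)\leq \tilde\beta^n \tr_{\Lambdamat}(\Wmat)$. Since on the finite-dimensional cone $\bbM^h_+$ the weighted trace controls all entries, the partial sums of $\sum_{n\geq 0}\cT^n\Wmat$ are Cauchy in matrix norm, and $\bbM^h_+$ is closed, so the limit $\Vmat\in\bbM^h_+$ exists, satisfies $(\mathrm{Id}-\cT)\Vmat = \Wmat$, and $\tr_{\Lambdamat}(\Vmat)\leq \sum_{n\geq 0}\tilde\beta^n\tr_{\Lambdamat}(\Wmat) = \beta\tr_{\Lambdamat}(\Wmat)$. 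The hardest technical step is the Hilbert--Schmidt identification in (ii), where the mismatch between $H$-orthonormality of $\{\varphi_p^h\}$ and the $V$-norm on $V^h$ must be resolved via the weights $\lambda_p^h$.
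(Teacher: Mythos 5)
Your proposal is correct and follows essentially the same route as the paper: rank-one SPSD decomposition of $\Wmat$, the Hilbert--Schmidt identity $\sum_p \lambda_p^h \norm{Q^{1/2}G_1[\nativecdot]'\varphi_p^h}{\cU}^2 = \norm{P_h G_1[\nativecdot]Q^{1/2}}{\cL_2(\cU;V^h)}^2$ combined with the operator-norm bound for (ii), and a Neumann series for (iii). The only cosmetic difference is that you use a Cholesky decomposition where the paper uses a $V$-orthonormal spectral expansion $W=\sum_q s_q\,\psi_q\otimes\psi_q$; both give $\sum_k\norm{W_k}{V}^2=\tr_{\Lambdamat}(\Wmat)$, so the estimates coincide.
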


\begin{proof}
	With \eqref{e:rel-W-Wh},
	we have
	$\Wmat \in \bbM^h_{+}$
	if and only if
	$W \in  V^h\otimes V^h$
	is $V$-SPSD.
	Since $\cG_1$ preserves this property,
	see \eqref{e:defcurlyG1},
	and $\DIAG_{11} > 0$,
	the claim (i) follows.
%
	%
	For (ii), let
	$W = \sum_q s_q \psi_q \otimes \psi_q$
	be an expansion
	with $s_q \geq 0$ and
    $V$-orthonormal $\psi_q \in V^h$
	for which
    $\tr_{\Lambdamat}(\Wmat) = \norm{W}{V_\pi} = \sum_q s_q$.
	The assertion (ii) follows:
	\begin{equation*}
		\tr_{\Lambdamat}(\cT \Wmat)
		=
		\sum_{p} (\cT \Wmat)_{pp} \lambda_p^h
		\leq \DIAG_{11} \max_p b_{p0}^{-2}
		\max_{q} \norm{P_h G_1[\psi_q] Q^{1/2}}{\cL_2(\cU;V^h)}^2
		\tr_{\Lambdamat}(\Wmat)
		\leq \tilde{\beta} \tr_{\Lambdamat}(\Wmat),
	\end{equation*}
	since, letting $P_h'$ denote the $H$-adjoint of the projection $P_h$, we find
	\begin{equation*}
		\sum_p
		\scalar{\cG_1[\psi_q \otimes \psi_q], \varphi_p^h \otimes \varphi_p^h}{H_2}
		\lambda_p^h
		=
		\sum_p
		\norm{ Q^{1/2} G_1[\psi_q]' P_h' \varphi_p^h }{\cU}^2
		\lambda_p^h
		=
		\norm{P_h G_1[\psi_q] Q^{1/2}}{\cL_2(\cU;V^h)}^2
	\end{equation*}
	and $\psi_q$ has unit $V$-norm.
	Finally,
	if $\tilde{\beta} < 1$
	then the Neumann series
	$\sum_{n \geq 0} \cT^n \Wmat$
	consists of terms in $\bbM^h_{+}$
	and converges,
	which gives (iii).
\end{proof}

\begin{proof}[Proof of Theorem~\ref{t:stab:dis-pde}]
	Consider the expansion of $U^{k,h}$ in terms of the
	basis $\{e_m\otimes\varphi_p^h \}_{m,p}\subset \cX^{k,h}$
	\begin{equation*}
		U^{k,h} = \sum_{mn,pq} U_{mn,pq} (e_m \otimes \varphi_p^h) \otimes (e_n \otimes \varphi_q^h) \in \cX^{k,h} \otimes \cX^{k,h}.
	\end{equation*}
	Similarly, let $G_{mn,pq}$ denote the corresponding coefficients of the solution $G^{k,h}$ to the problem
	\begin{equation*}
		\text{Find}
		\quad
		G^{k,h} \in \cX^{k,h}_\pi
		\quad\text{s.t.}\quad
		B(G^{k,h}, v) = \ell(v)
		\quad
		\forall v \in \cY^{k,h}_\epsilon.
	\end{equation*}
    Define the matrices
	$\Umat_n := (U_{nn,pq})_{pq}$
	and
	$\Gmat_n := (G_{nn,pq})_{pq}$.
	Since $\ell$ is SPSD,
	we have $\Gmat_n \in \bbM^h_{+}$.
    By testing \eqref{e:vf-dis-pde} with
    $v_{ij,pq} := (v_i \otimes \varphi_p^h) \otimes (v_j \otimes \varphi_q^h)$
    for fixed $p,q$, we find as in \eqref{e:Umn} that
    \begin{equation*}
        b_{p0} b_{q0} U_{nn,pq}
            = b_{p0} b_{q0} G_{nn,pq}
                + \sum_{i=0}^{n-1} \sum_{j=0}^{n-1} [\Pi_p]_{i}^{n-1} [\Pi_q]_{j}^{n-1} \DIAG^{k,h}\cG_1 (U^{k,h}, v_{ij,pq})
                ,
    \end{equation*}
    where
    $
		[\Pi_p]_{i}^{n} := ( -b_{p1}  / b_{p0} )^{n-i}
	$.
    After rearranging, this gives
    $( \Umat_1 - \cT\Umat_1)_{pq} = (\Gmat_1)_{pq}$
    when $n = 1$,
    and, for $n\geq 2$,
    \[
        \label{e:Id-T,pq}
        (\Umat_n - \cT\Umat_n)_{pq}
             = \theta_p\theta_q ( \Umat_{n-1} - \Gmat_{n-1})_{pq}
               + (\Gmat_n)_{pq}
               + \DIAG_{11} ^{-1} (\DIAG_{22} - \DIAG_{12}\theta_p - \DIAG_{21}\theta_q )(\cT \Umat_{n-1})_{pq},
    \]
    where $\cT$ is the operator from~\eqref{e:def:T}.
    In terms of $\boldsymbol{\theta}_p := (-\theta_p, 1)^\T$ and
    $\boldsymbol{\DIAG}$ we thus have
    \[
        \label{e:Id-T,pq2}
        (\Umat_n - \cT\Umat_n)_{pq}
            &= \theta_p\theta_q ( \Umat_{n-1} - \cT\Umat_{n-1} - \Gmat_{n-1})_{pq}
               + (\Gmat_n)_{pq}
               + \DIAG_{11} ^{-1} ( \boldsymbol{\theta}_p^\T \boldsymbol{\DIAG} \boldsymbol{\theta}_q )(\cT \Umat_{n-1})_{pq}.
    \]
    We define the diagonal matrices
    $\Thetamat = \operatorname{diag}(\theta_p)_p$
	and
	$\Dmat_\eta := \DIAG_{11}^{-1/2} \operatorname{diag}(L_{1\eta} \theta_p - L_{2\eta})_p$,
	for $\eta=1,2$ and a Cholesky factor $\Lmat$ of
	$\boldsymbol{\DIAG} = \Lmat\Lmat^\T$,
    and can then express \eqref{e:Id-T,pq2}
    in matrix form as
	\[
		\label{e:Id-T}
		(\mathrm{Id} - \cT) \Umat_n
		& =
		\Thetamat (\mathrm{Id} - \cT) \Umat_{n-1} \Thetamat
		+
		\Gmat_n - \Thetamat \Gmat_{n-1} \Thetamat
		+
		\sum_{\eta=1}^2 \Dmat_\eta \cT \Umat_{n-1}  \Dmat_\eta,
        \quad
        n \geq 2.
	\]
    For $n=1$, we have $(\mathrm{Id} - \cT) \Umat_1 = \Gmat_1$.
    By induction, it follows from \eqref{e:Id-T} that
	\[
		\label{e:Id-T,2}
		(\mathrm{Id} - \cT) \Umat_n
		& =
		\Gmat_n
		+
		\sum_{\eta=1}^2 \sum_{\nu=1}^{n-1}
		\Thetamat^{n-1-\nu} \Dmat_\eta \cT \Umat_{\nu} \Dmat_\eta \Thetamat^{n-1-\nu}.
	\]
	Since $\beta > 0$ by assumption,
	$(\mathrm{Id} - \cT)$ is invertible
	on $\bbM^h_{+}$
	by Lemma~\ref{l:T},
	so that
	\eqref{e:Id-T,2}
	defines
	$\Umat_1, \ldots, \Umat_N$
	in $\bbM^h_{+}$.
	Let $\widehat{U}^{k,h}:=\sum_{n,pq} U_{nn,pq} (e_n\otimes\varphi_p^h)\otimes(e_n\otimes\varphi_q^h)$
	and $\widehat{\ell} := \DIAG^{k,h} \cG_1 \widehat{U}^{k,h} + \ell$.
	The fact that $\cG_1$ preserves $H$-SPSD-ness
	and $\Umat_1,\ldots,\Umat_N\in\bbM^h_{+}$
	imply that $\widehat{\ell}$ is SPSD on $\cY^{k,h}$.
	Owing to~\eqref{e:B-pde:g}, there exists
	a unique $\cX$-SPSD $U^{k,h}\in\cX^{k,h}_{\pi}$ with
	$B(U^{k,h},v) = \widehat{\ell}(v)$ for all $v\in \cY_\epsilon^{k,h}$.
	As in the scalar case, we conclude from the construction
	of $\widehat{U}^{k,h}$ and $U^{k,h}$ that
	$U_{nn,pq} = \widehat{U}_{nn,pq}$
	so that $U^{k,h}$ is the unique solution to~\eqref{e:vf-dis-pde}.
	The $\cX$-SPSD-ness of $U^{k,h}$ and
	the $L_2(J;H)$-orthonormality of
	$\{e_m \otimes \varphi_p^h \}_{m,p} \subset \cX^{k,h}$
	yield
	\[
		\label{e:Ukh-stab1}
		\norm{U^{k,h}}{\cX_\pi}
		= \sum_{n,p} \lambda_p^h U_{nn,pp}
		= \sum_n \tr_{\Lambdamat} ( \Umat_n )
		.
	\]
	A similar
	equality holds also for $G^{k,h}$.
    For $n\geq 2$, we estimate with Lemma~\ref{l:T}~(i)--(ii)
    \[
    \label{e:trUn}
        0 \leq \beta^{-1} \tr_{\Lambdamat}( \Umat_n )
            = (1 - \tilde\beta) \tr_{\Lambdamat}( \Umat_n )
            \leq \tr_{\Lambdamat}( \Umat_n ) - \tr_{\Lambdamat}( \cT\Umat_n )
            = \tr_{\Lambdamat}( (\mathrm{Id} - \cT) \Umat_n ),
    \]
	and use the identity \eqref{e:Id-T,pq}
    as well as
    $\DIAG_{12} \tr_\Lambda( \Thetamat \cT\Umat_{n-1} ) =\DIAG_{21} \tr_\Lambda( \cT\Umat_{n-1} \Thetamat )$
	to derive the bound
	\[
    \begin{split}
    \label{e:trUn-TUn}
		\tr_\Lambda ( (\mathrm{Id} - \cT) \Umat_n )
		&=
		\tr_\Lambda( \Thetamat \Umat_{n-1} \Thetamat
		+ \DIAG_{11}^{-1} ( \DIAG_{22} - 2\DIAG_{12}\Thetamat) \cT\Umat_{n-1}
		+ \Gmat_n
		- \Thetamat \Gmat_{n-1} \Thetamat )
		\\
		& \leq
		\beta^{-1} \alpha \tr_\Lambda( \Umat_{n-1} )
		+ \tr_\Lambda( \Gmat_n )
		- \theta_-^2 \tr_\Lambda( \Gmat_{n-1} ).
    \end{split}
	\]
    Furthermore, by Lemma~\ref{l:T}~(iii) above
	we find
    $\tr_{\Lambdamat}( \Umat_1 )
    = \tr_{\Lambdamat}( (\mathrm{Id} - \cT)^{-1} \Gmat_1 )
    \leq \beta \tr_{\Lambdamat}( \Gmat_1 )$.
	By combining this with \eqref{e:trUn}--\eqref{e:trUn-TUn} we obtain by induction, for all $n$,
	\begin{equation*}
		\tr_{\Lambdamat}( \Umat_n )
            \leq \beta \tr_{\Lambdamat}( \Gmat_n )
                + \beta( \alpha - \theta_-^2 ) \sum_{\nu=1}^{n-1} \alpha^{n-1-\nu} \tr_{\Lambdamat}( \Gmat_\nu ).
	\end{equation*}
	Inserting this estimate in~\eqref{e:Ukh-stab1}
	and changing the order of summation in the second term gives
	\begin{equation*}
		\norm{U^{k,h}}{\cX_\pi}
		\leq
		\beta \norm{G^{k,h}}{\cX_\pi}
		+ \beta( \alpha - \theta_-^2 ) \sum_{\nu=1}^{N-1} \tr_{\Lambdamat}( \Gmat_\nu ) \sum_{n=0}^{N-1-\nu} \alpha^{n}
		\leq
		\beta( 1 + ( \alpha - \theta_-^2 ) \tfrac{\alpha^{N-1} - 1}{\alpha - 1} )  \norm{G^{k,h}}{\cX_\pi}
		.
	\end{equation*}
    The application of the discrete stability
    estimate
	$
    \norm{G^{k,h}}{\cX_\pi}
	\leq
	(\min_p \gamma_{k,p}^2)^{-1}
	\norm{\ell}{\cY_\epsilon'}
	$
	from \eqref{e:B-pde:g}
    completes the proof of the stability bound~\eqref{e:dis-stab-pde}.
\end{proof}

As for the scalar case, the discrete stability estimate~\eqref{e:dis-stab-pde} implies
an inf-sup condition for $\cB^{k,h}$ in \eqref{e:cBkh} on the
subspaces $\widehat{\cX}^{k,h}_\pi \subset \cX^{k,h}_\pi$ and
$\widehat{\cY}^{k,h}_\epsilon \subset \cY^{k,h}_\epsilon$
of symmetric elements~\eqref{e:HSPSD}.
Subsequently, Proposition~\ref{p:well:crime} is applicable, which
gives a quasi-optimality estimate for the $\text{CN}^{\star}_2$ scheme.
These observations are summarized in the following theorem.

\begin{theorem}\label{thm:conv-pde}
Suppose that
the temporal mesh is uniform with $\beta > 0$, and
let $\cG_1\in\cL(V_\pi)$
with norm $C_G = \eqref{e:curlyG1bound}$.
Then $\cB^{k,h}$ in \eqref{e:cBkh} satisfies the
discrete inf-sup condition
\[
    \label{e:inf-sup-cBkh}
    \inf_{w\in S(\widehat{\cX}^{k,h}_\pi)} \sup_{v\in S(\widehat{\cY}^{k,h}_\epsilon)} \cB^{k,h}(w,v) \geq C_{k,h}^{-1},
\]
where $C_{k,h}$ is the discrete stability constant in \eqref{e:dis-stab-pde}.
If $\ell\in\cY_\epsilon'$ is symmetric,
the error between the exact solution $U\in\cX_\pi$ to \eqref{e:vf-pde}
and the discrete solution $U^{k,h} \in \cX^{k,h}_\pi$ to \eqref{e:vf-dis-pde}
for the $\text{CN}^{\star}_2$ scheme
admits the bound
\[
    \label{e:quasiopti-pde}
    \norm{U - U^{k,h}}{\cX_\pi}
        \leq (1 + C_{k,h} \norm{\cB^{k,h}}{} ) \inf_{w\in\cX^{k,h}} \norm{U - w}{\cX_\pi},
\]
where $\norm{\cB^{k,h}}{} \leq 1 + \tfrac{C_G}{2}$ is the operator norm
of $\cB^{k,h}\from \cX_\pi \to \cY_\epsilon'$ induced by \eqref{e:cBkh}.
\end{theorem}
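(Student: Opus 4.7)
My plan is to combine the discrete stability result of Theorem~\ref{t:stab:dis-pde} with the Banach-space Petrov--Galerkin quasi-optimality of Proposition~\ref{p:well:crime}, reproducing verbatim the symmetrization and decomposition strategy already used in Corollary~\ref{cor:cBinfsup} and around~\eqref{e:iE-post}. To obtain~\eqref{e:inf-sup-cBkh} first, I fix a nonzero symmetric $w \in \widehat{\cX}^{k,h}_\pi$, set $\ell := \cB^{k,h} w$ on $\widehat{\cY}^{k,h}_\epsilon$, and extend it to $\cY_\epsilon$ with preserved norm by Hahn--Banach. Decomposing $\ell = \ell^+ - \ell^- + \ell^a$ as in~\eqref{e:lpm}, the symmetry of $w$ forces $\ell^a = 0$. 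Theorem~\ref{t:stab:dis-pde} then delivers $\cX$-SPSD solutions $w^\pm \in \widehat{\cX}^{k,h}_\pi$ of~\eqref{e:vf-dis-pde} with right-hand sides $\ell^\pm$, obeying $\norm{w^\pm}{\cX_\pi} \leq C_{k,h}\norm{\ell^\pm}{\cY_\epsilon'}$; uniqueness (guaranteed by $\beta > 0$) forces $w = w^+ - w^-$. The triangle inequality combined with~\eqref{e:lpm} then yields $\norm{w}{\cX_\pi} \leq C_{k,h}\norm{\ell}{\cY_\epsilon'}$, which is~\eqref{e:inf-sup-cBkh}.

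For the operator-norm bound on $\cB^{k,h}$, $B$ is an isometric isomorphism by~\eqref{e:B-pde}, while Lemma~\ref{l:YC0} gives $\norm{v(t,t)}{(V')_\epsilon} \leq \tfrac12 \norm{v}{\cY_\epsilon}$ pointwise and Lemma~\ref{l:pinorm-pde} yields $\norm{\widehat{w}}{L_1(J;V_\pi)} \leq \norm{w}{\cX_\pi}$; combined with the duality~\eqref{e:gelfand2} they give $|\DIAG(w,v)| \leq \tfrac12\norm{w}{\cX_\pi}\norm{v}{\cY_\epsilon}$. For the $\text{CN}^\star_2$ variant, where the scalar choice $\DIAG^k = \DIAG$ makes $\DIAG^{k,h}$ factor through the $H$-orthogonal projection $P_h \otimes P_h$ applied to both arguments, the same bound survives because $P_h \otimes P_h$ is a contraction on $\cX_\pi$ and on $\cY_\epsilon$. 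Composing with $\cG_1$ and invoking~\eqref{e:curlyG1bound} yields $\norm{\DIAG^{k,h}\cG_1}{} \leq C_G/2$ and hence $\norm{\cB^{k,h}}{} \leq 1 + C_G/2$.

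Quasi-optimality then follows by applying Proposition~\ref{p:well:crime} with $X = \cX_\pi$, $Y = \cY_\epsilon$, $B = \cB$, $\bar B = \cB^{k,h}$, $X_h \times Y_h = \widehat{\cX}^{k,h}_\pi \times \widehat{\cY}^{k,h}_\epsilon$, and inf-sup constant $\bar\gamma_h = C_{k,h}^{-1}$ from step one. The crucial consistency observation for $\text{CN}^\star_2$ is that $\DIAG^{k,h}$ agrees with $\DIAG$ when tested against any $v \in \cY^{k,h}_\epsilon$: since the spatial Fourier coefficients $v_{pq} \in F^k_\epsilon$ already live in the scalar discrete test space on which $\DIAG^k = \DIAG$, expanding $v(t,t)$ in the $\{\varphi_p^h\otimes\varphi_q^h\}$ basis and using the $H_2$-pairing gives
\[
\DIAG^{k,h}(U,v) = \sum_{pq}\DIAG(U_{pq}, v_{pq}) = \DIAG(U,v) \quad \forall\, U \in \cX_\pi .
\]
Hence the residual $(\cB - \cB^{k,h})U$ vanishes on $\cY^{k,h}_\epsilon$, so Proposition~\ref{p:well:crime} collapses to $\norm{U - U^{k,h}}{\cX_\pi} \leq (1 + C_{k,h}\norm{\cB^{k,h}}{})\inf_{w\in\widehat{\cX}^{k,h}_\pi}\norm{U - w}{\cX_\pi}$. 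Since the exact $U$ is symmetric by Theorem~\ref{t:stab-mul-pde}, the symmetrization $w \mapsto \tfrac12(w + w^*)$ does not worsen the error, so the infimum may be enlarged from $\widehat{\cX}^{k,h}_\pi$ to $\cX^{k,h}_\pi$, recovering~\eqref{e:quasiopti-pde}. The most delicate point in this plan is the verification of the consistency identity above for general $U \in \cX_\pi$ tested against a discrete $v$, which hinges on transferring the spatial projection $P_h \otimes P_h$ from $U$ to $v$ at no cost because $v$ is already discrete; once this is pinned down, the remainder of the argument is a routine transcription of the scalar-case proofs.
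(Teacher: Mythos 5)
Your proposal is correct and follows essentially the same route as the paper: the inf-sup condition via the SPSD decomposition argument of Corollary~\ref{cor:cBinfsup}, the bound $\norm{\cB^{k,h}}{}\leq 1+\tfrac{C_G}{2}$ from Lemmas~\ref{l:YC0}--\ref{l:pinorm-pde} and the contractivity of $P_h\otimes P_h$, quasi-optimality from Proposition~\ref{p:well:crime} with the residual vanishing because $\DIAG^{k,h}(w,v)=\DIAG(w,(P_h\otimes P_h)v)$ and discrete test functions satisfy $(P_h\otimes P_h)v=v$, and the symmetrization $w\mapsto\tfrac12(w+w^*)$ to pass from $\widehat{\cX}^{k,h}_\pi$ to $\cX^{k,h}_\pi$. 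The only cosmetic imprecision is attributing the symmetry of $U$ to Theorem~\ref{t:stab-mul-pde}, which asserts it only for SPSD $\ell$; for merely symmetric $\ell$ it follows from the decomposition $\ell=\ell^+-\ell^-$ and linearity (or from uniqueness, since $U^*$ solves the same problem).
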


\begin{proof}
Since the inf-sup estimate \eqref{e:inf-sup-cBkh}
follows by exactly the same arguments as in
the scalar case, Corollary~\ref{cor:cBinfsup},
we focus on the derivation of the
quasi-optimality estimate \eqref{e:quasiopti-pde}.
By Proposition~\ref{p:well:crime} we have
\begin{equation*}
    \norm{U - U^{k,h}}{\cX_\pi}
        \leq (1 + C_{k,h} \norm{\cB^{k,h}}{}) \inf_{w\in\widehat{\cX}^{k,h}} \norm{U - w}{\cX_\pi}
             + C_{k,h} \norm{ (\DIAG - \DIAG^{k,h}) \cG_1 U}{(\widehat{\cY}^{k,h}_\epsilon)'}.
\end{equation*}
For the exact scalar trace product $\DIAG^k := \DIAG$,
the definition of $\DIAG^{k,h}$ in \eqref{e:DIAGkh}
gives
\begin{equation*}
    \DIAG^{k,h}(w,v) = \DIAG(w,(P_h \otimes P_h)v)
    \quad
    \forall (w,v) \in \cX_\pi \times \cY_\epsilon.
\end{equation*}
This shows that the residual term
$\norm{ (\DIAG - \DIAG^{k,h}) \cG_1 U }{(\widehat{\cY}^{k,h}_\epsilon)'}$ vanishes
for the $\text{CN}^{\star}_2$ scheme.
Furthermore, $\cB^{k,h}$  is continuous on
$\cX_\pi \times \cY_\epsilon$ with $\norm{\cB^{k,h}}{} \leq 1 + \tfrac{C_G}{2}$,
since
Lemmas~\ref{l:YC0}--\ref{l:pinorm-pde} and $\norm{P_h}{\cL(H)} = 1$
yield the bound
$\DIAG(\cG_1 w, (P_h\otimes P_h)v)
    \leq \tfrac{C_G}{2} \, \norm{w}{\cX_\pi} \norm{v}{\cY_\epsilon}$.
The quasi-optimality estimate \eqref{e:quasiopti-pde}
formulated with respect to $\widehat{\cX}^{k,h}_\pi$
instead of $\cX^{k,h}_\pi$ follows.
Finally, if $U\in\cX_\pi$ is symmetric,
taking the symmetrization $\tfrac{1}{2}( w + w^*)$ of $w\in\cX^{k,h}_\pi$,
where
$w^*(s,t) := \sum_{pq} \scalar{w(t,s), \varphi_q^h \otimes \varphi^h_p}{H_2} \varphi^h_p \otimes \varphi_q^h$,
gives
$\norm{U-\tfrac{1}{2}(w+w^*)}{\cX_\pi} \leq \norm{U-w}{\cX_\pi}$,
since $U^* = U$.
This proves \eqref{e:quasiopti-pde} on $\cX^{k,h}_\pi$.
\end{proof}

\subsection{Numerical example}
\label{s:pde:numexp}

For the following numerical experiment
we set $T:=1$,
$H := L_2(0, 1)$ equipped with the usual inner product,
and $A := -(\CDOT)''$ with homogeneous Dirichlet boundary conditions.
Then $V := \cD(A^{1/2}) = H_0^1(0, 1)$, and
the norm on $V$ is the $H^1$ semi-norm.
The eigenvalues and $H$-orthonormal eigenfunctions of $A$ are
\begin{equation*}
	\lambda_\nu = \nu^2 \pi^2
	\TEXT{and}
	\varphi_\nu(x) = \sqrt{2} \sin(\nu \pi x),
    \qquad
    \nu=1,2,\ldots
\end{equation*}
Furthermore, the sequence $\{\psi_\nu\}_\nu$,
defined by $\psi_\nu := \lambda_\nu^{-1/2}\varphi_\nu$,
forms an orthonormal basis of~$V$.
For the noise $L$ of the stochastic PDE~\eqref{e:spde-mul}, we choose
a $Q$-Wiener process  $L := (W^Q(t),t\geq0)$
taking values in the Sobolev space $\cU := V$.
We assume that the trace-class covariance operator $Q\in\cL(V)$
diagonalizes with respect to the orthonormal
basis $\{\psi_\nu\}_\nu$ of $V$, i.e., there exists
a summable sequence $\{\mu_\nu\}_\nu$
of nonnegative real numbers such that
$Q\phi = \sum_\nu \mu_\nu \scalar{\phi, \psi_\nu}{V} \psi_\nu$
for all $\phi \in V$.
Finally, we specify the affine operator
$G[\CDOT] = G_1[\CDOT] + G_2$
in \eqref{e:spde-mul}.
We set $G_2:=0$ and let~$G_1$
be a Nemytskii operator,
$G_1[\phi]\psi : x \mapsto \phi(x) \psi(x)$
for $\phi\in H$, $\psi\in V$.

In this case, well-posedness of the
deterministic variational problem~\eqref{e:vf-pde}
is ensured by
Theorem~\ref{t:stab-mul-pde}
for any $\ell\in\cY_\epsilon'$, since
\[
    \label{e:nemytskii-bounded}
    \norm{G_1[\phi]Q^{1/2}}{\cL_2(\cU;V)}^2
        = \sum_\nu \mu_\nu \norm{G_1[\phi] \psi_\nu}{V}^2
        \leq 8 \lambda_1^{-1} \tr(Q) \norm{\phi}{V}^2
        \quad
        \forall\phi\in V,
\]
and, therefore,
$C_G\leq 8 \lambda_1^{-1} \tr(Q)$
in~\eqref{e:curlyG1bound}.
In the last step of \eqref{e:nemytskii-bounded}
we have used the bounds
$|\psi_\nu (x)|^2 \leq 2 \lambda_\nu^{-1}$ and
$|\psi_\nu'(x)|^2 \leq 2$
for $x\in[0,1]$, as well as
$\norm{\phi}{H}^2 \leq \lambda_1^{-1} \norm{\phi}{V}^2$
to derive, for all $\nu$,
\begin{equation*}
    \norm{G_1[\phi] \psi_\nu}{V}^2 \leq 2 \int_0^1 ( |\phi'(x) \psi_\nu(x)|^2 + |\phi(x) \psi_\nu'(x)|^2 ) \, \rd x
        \leq 4  (\lambda_\nu^{-1} + \lambda_1^{-1}) \norm{\phi}{V}^2
    \quad \forall \phi\in V.
\end{equation*}

In order to discretize the problem,
we let $V^h := \operatorname{span}\{\varphi_1, \ldots, \varphi_{\dim V^h}\} \subset V$
be the subspace generated by the first $\dim V^h$
eigenfunctions of $A$.
We recall the range~\eqref{e:ijmnpqrs} of the indices $p,q,r,s$
and define the functional $\ell_{pq}\in F_\epsilon'$
as the unique continuous linear extension of
\begin{equation*}
    \ell_{pq}(v\otimes\tilde{v}) := \ell((v\otimes\varphi_p) \otimes (\tilde{v} \otimes \varphi_q))
    \quad
    \forall v, \tilde{v} \in F.
\end{equation*}
Besides, we introduce the notation
$B_{pq} := b_p \otimes b_q$ and
$\rho_{pq,rs} := \scalar{\cG_1[\varphi_r \otimes \varphi_s], \varphi_p\otimes\varphi_q}{H_2}$.
%
Then the coefficients $U_{pq} := \scalar{U,\varphi_p\otimes\varphi_q}{H_2}$,
cf.~\eqref{e:wpq-vpq}, of the semi-discrete solution
$\sum_{p,q=1}^{\dim V^h} U_{pq} (\varphi_p\otimes \varphi_q)$
satisfy the following system of variational problems
on $E_\pi \times F_\epsilon$:
\[
\label{e:system-pde-ex}
    \text{Find}
    \quad
    U_{pq} \in E_\pi
    \TEXT{s.t.}
    B_{pq}(U_{pq} , v) - \sum_{r,s=1}^{\dim V^h} \rho_{pq,rs} \DIAG(U_{rs}, v)
        = \ell_{pq}(v)
        \quad
        \forall v\in F_\epsilon
        .
\]

For the simulation, we choose the functional $\ell\in\cY_\epsilon'$ as
the right-hand side of the second moment equation
$\ell(v) := \ell_M(v) = \duality{\bbE[X_0 \otimes X_0], v(0)}{\pi,\epsilon}$,
and
we take the deterministic initial value
\linebreak
$X_0(x):= \sqrt{30} (x-x^2)$
from \cite[\S4]{LangPetersson2018}, which is normalized to $\norm{X_0}{H}=1$.
We furthermore let $\mu_\nu := 32 \nu^{-5}$
be the eigenvalues of the covariance operator $Q$, i.e.,
$\mu_\nu = C \lambda_\nu^{-r}$
for $C = 32\pi^5$, $r = 5/2$, and, thus,
$Q$ is a constant multiple of the inverse fractional $1d$-Laplacian.
Stochastic processes with covariance operators of this type
are sometimes called Riesz fields, see
\cite[\S4]{vanWykGunzburgerBurkhardtStoyanov2015}.

As a reference solution $U^{\mathrm{ref}}$
for the second moment $U=\bbE[X\otimes X]\in\cX_\pi$
of the solution~$X$ to~\eqref{e:spde-mul},
we take
the Monte Carlo estimator from $2^{12}=h_{\mathrm{ref}}^{-2}$ sample paths
generated with the 
Euler--Maruyama
method with a constant time step size $k_{\mathrm{ref}}=2^{-15}$
and continuous piecewise
affine basis functions on spatial
grid with uniform mesh width $h_{\mathrm{ref}}=2^{-6}$.
The sample paths of the $Q$-Wiener process are simulated
from a truncation of the representation
$W^Q(t) = \sum_{\nu} \mu_\nu W_\nu(t) \psi_\nu$,
where $\{W_\nu\}_\nu$ is a sequence of
mutually independent real-valued Wiener processes.
Since the decay of the eigenvalues of $Q$ is given by
$\mu_\nu \lesssim \nu^{-\eta}$
for $\eta = 5$, 
we truncate this series after $\kappa := (h_{\mathrm{ref}})^{-\frac{2}{\eta-1}} = 8$ terms,
see \cite[Thm.~3.2]{LangPetersson2018}.

We let $\dim V^h = 5$ and discretize the system~\eqref{e:system-pde-ex}
with the $\text{CN}_2^*$ scheme proposed in \S\ref{s:dis:2-mul}. 
To this end, we use the representation
$\rho_{pq,rs} = \sum_\nu \mu_\nu \sigma^\nu_{p,r} \sigma^\nu_{q,s}$,
where
$\sigma^\nu_{p,r}
:=
\scalar{ G_1[\varphi_r] \psi_\nu, \varphi_p }{H}$,
and use the same truncation for this series
as for the $Q$-Wiener process
in the simulation of the reference solution, i.e.,
we truncate after $\kappa = 8$ terms.
By evaluating the integrals
we find
\begin{equation*}
    \sigma^\nu_{p,r}
	=
    \int_0^1 \varphi_r(x) \psi_\nu(x) \varphi_p(x) \rd x
    =
	\begin{cases}
    	\lambda_\nu^{-1/2}
	\frac{
	- 8 \sqrt{2}
	\nu p r
	}{
	\pi (\nu + p + r) (\nu - p + r) (\nu + p - r) (\nu - p - r)
	}
	&
	\text{if $(\nu + p + r)$ is odd}, \\
	0
	&
	\text{otherwise}.
	\end{cases}
\end{equation*}
In this way, we obtain approximations $U^k_{pq} \in E^k_\pi$
of the coefficients $U_{pq}\in E_\pi$,
and an overall approximation
$U^{k,h} = \sum_{pq} U_{pq}^{k} (\varphi_p\otimes \varphi_q) \in \cX^{k,h}_\pi$
of the solution $U\in\cX_\pi$ to~\eqref{e:vf-pde}.

We use the symmetrization and preconditioning
from \S\ref{s:numex}
and solve the discretized system with
the conjugate gradients method
by applying the MATLAB \texttt{pcg} solver
with tolerance $10^{-10}$.
For $1\leq p\leq \dim V^h=5$,
we measure the error of $U^{\mathrm{num}}_{pp}:=U^{k}_{pp}$
against the coefficient~$U_{pp}^{\mathrm{ref}}$ of the reference solution
as the $L_1$ error on the diagonal
$\mathrm{E}_p(U^{\mathrm{num}}) := \diag(|U_{pp}^{\mathrm{ref}} - U_{pp}^{\mathrm{num}}|)$,
as in \S\ref{s:numex}.
Finally, we approximate the total error
$\norm{U - U^{k,h}}{\cX_\pi}$ by the weighted sum
$\mathrm{E}(U^{\mathrm{num}}) := \sum_{p} \lambda_p \mathrm{E}_p(U^{\mathrm{num}})$,
motivated by Lemma~\ref{l:pinorm-pde}
which gives
$\norm{w}{\cX_\pi} = \norm{\widehat{w}}{L_1(J;V_\pi)} = \sum_{\nu\in\IN} \lambda_\nu \diag(w_{\nu\nu})$
for every $\cX$-SPSD $w\in\cX_\pi$.
The results are presented in Figure~\ref{f:num-pde},
showing first order convergence with respect to the
temporal discretization parameter $k$
for every coefficient $U^{\mathrm{num}}_{pp}$ as well as for
the measure of the total error,
which is in accordance with Theorem~\ref{thm:conv-pde}.

\begin{figure}[t]
	\includegraphics[width=0.55\textwidth]{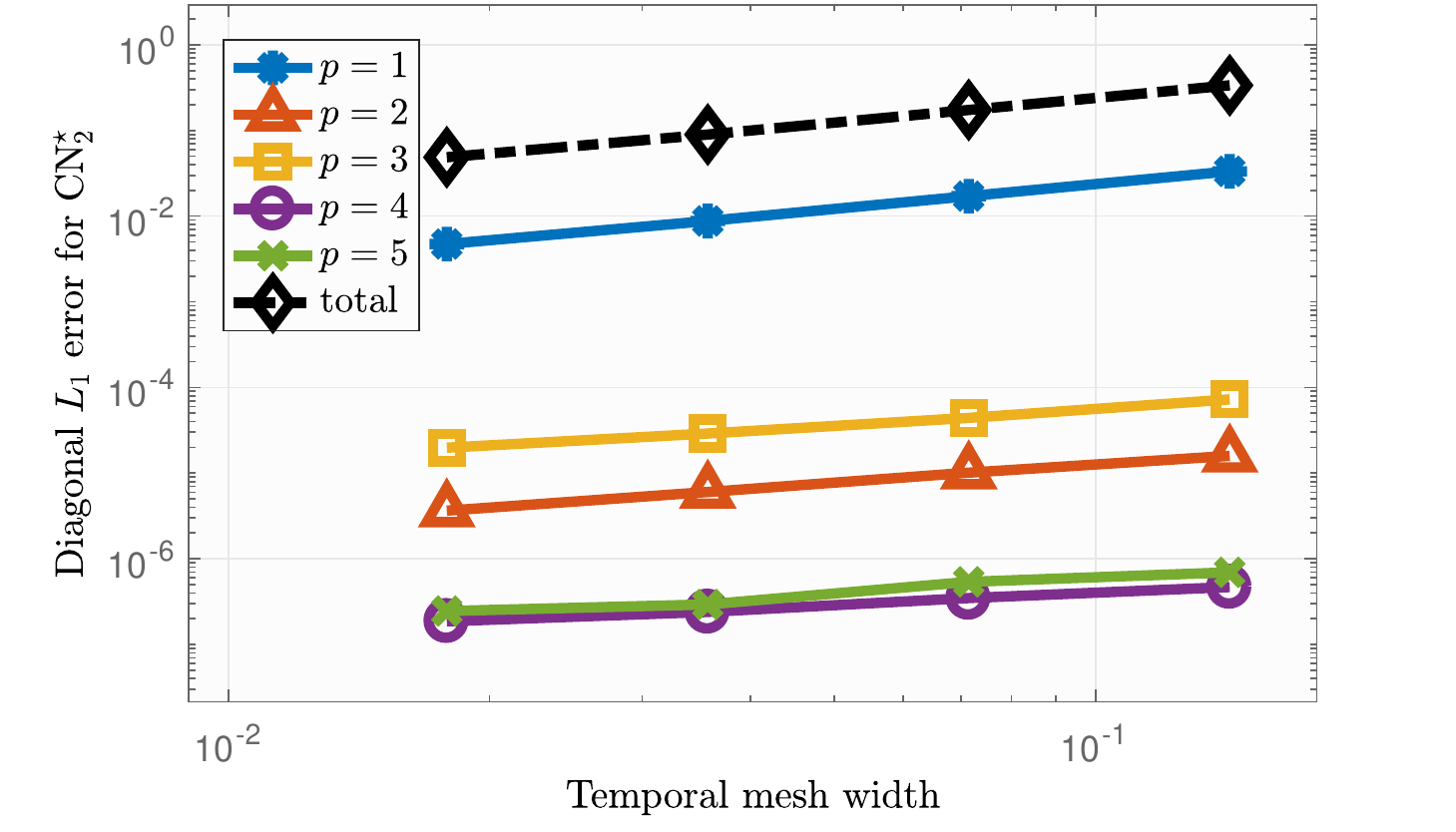}
    \caption{%
		The error of the coefficients
		$\mathrm{E}_p(U^{\mathrm{num}}) = \diag(|U_{pp}^{\mathrm{ref}} - U_{pp}^{\mathrm{num}}|)$
        as well as
        the total error $\mathrm{E}(U^{\mathrm{num}}) = \sum_p \lambda_p \mathrm{E}_p(U^{\mathrm{num}})$
		as a function of the temporal mesh width
		for the example from
		\S\ref{s:pde:numexp}.
		\label{f:num-pde}
	}
\end{figure}

\section{Conclusions}
\label{s:end}

We have considered the
model stochastic ODEs
\eqref{e:sode-add}, \eqref{e:sode-mul}
with additive and multiplicative Wiener noise
and
have derived
the deterministic equations
in variational form
satisfied
by the first \eqref{e:meaneqn} and second moment \eqref{e:U-add}, \eqref{e:U-mul}
of the solution.
The equations for the second moment
are
posed on tensor products
of function spaces,
which
can be taken
as
Hilbert tensor products
\eqref{e:E2F2}
in the additive case,
whereas
projective--injective
tensor product spaces
\eqref{e:EpiFeps}
as trial--test spaces
are required
in the multiplicative case.
The well-posedness
of these equations
is evident
in the additive case \eqref{e:U-add}
by the isometry property of the operator \eqref{e:B},
but
the multiplicative case,
analyzed in Theorem \ref{t:stab-mul},
requires more work due to
the presence of the trace product \eqref{e:D}
in the operator.

We have discussed
Petrov--Galerkin discretizations
of two basic kinds
for
the first moment:
$\text{CN}^\star$ (\S\ref{s:dis:1:CN*})
and
$\text{iE}^\star$ (\S\ref{s:dis:1:iE*}).
The main difference is in
the stability behavior documented in Figure~\ref{f:CN-iE},
wherein
$\text{CN}^\star$ requires the CFL number to be small,
as opposed to
$\text{iE}^\star$
which
can be made stable \eqref{e:p:iE*} under mild restrictions
on the temporal mesh.
Higher order generalizations
followed in \S\ref{s:dis:1:both}.
From these,
tensor product Petrov--Galerkin discretizations
are constructed
in \S\ref{s:dis:1:o}.
We have addressed the additive case briefly in \S\ref{s:dis:2-add}
in order to focus
the multiplicative case in \S\ref{s:dis:2-mul}.

Trying to harness
the favorable stability properties
of the $\text{iE}^\star$ discretization,
two problems arise in the multiplicative case:
lack of density of the trial spaces
(see \S\ref{s:dis:1:iE*})
and
inconsistent interaction
of the basis functions with
the trace product
(see \S\ref{s:iE2}).
The first issue is addressed by postprocessing \eqref{e:Ukpost}
and
the second
by a modification of the trace product
(we have suggested the two variants
{$\text{iE}^{\star}_2/Q$}
and
{$\text{iE}^{\star}_2/\boxrule$}).
Unfortunately,
postprocessing,
as analyzed in the framework of variational crimes in
\eqref{e:iE-post},
again entails a CFL restriction.
Postprocessing
is
not required
for the higher order
discretizations
(see Figure \ref{f:nm1} and Table \eqref{t:nm1}),
but
their stability beyond
the trivial range \eqref{e:range}
remains to be verified.

Finally, we have generalized these results
to stochastic PDES driven by affine multiplicative L\'evy noise as
considered in \cite{KirchnerLangLarsson2016}.
By means of $C_0$-semigroups on projective tensor product spaces,
we have
found the condition $C_G = \eqref{e:curlyG1bound} < \infty$
for well-posedness of the deterministic second moment equation~\eqref{e:vf-pde}
in the vector-valued case (see Theorem~\ref{t:stab-mul-pde}),
which is less restrictive than
the smallness assumption
on the multiplicative noise term
made in \cite[Eq.~(5.5)]{KirchnerLangLarsson2016}.
Furthermore, we have discussed stability
of numerical approximations based on
the tensor product Petrov--Galerkin discretizations from \S\ref{s:dis:2-mul} in time,
and standard Galerkin discretizations in space, see Theorem~\ref{t:stab:dis-pde}.
From this, the quasi-optimality estimate \eqref{e:quasiopti-pde} for approximations
generated with the $\textrm{CN}^*_2$ scheme has followed.
Since no postprocessing is necessary
for the $\textrm{CN}^*_2$ discretization (see \S\ref{s:convergence}),
for the sake of brevity, we have focussed
on this method for the quasi-optimality analysis in \S\ref{s:pde:discretization},
and for the numerical experiment in \S\ref{s:pde:numexp}, see Figure~\ref{f:num-pde}.
However, we point out that the definition~\eqref{e:DIAGkh}
of the vector approximate trace product
decouples the disctretizations in space and in time.
Thus, the convergence results of the (postprocessed)
scalar $\textrm{iE}^*_2$ schemes
from \S\ref{s:convergence}
should also readily transfer to the vector-valued situation.

\bibliographystyle{abbrv}
\bibliography{refs}

\section*{Acknowledgment}

The author thanks
Roman Andreev for his contributions and support
during the writing of this article
as well as Stig Larsson and Sonja Cox
for their valuable input.

%

\vfill
\addresseshere



\end{document}